\newtheorem{thm}[equation]{Theorem} 
\newtheorem{lem}[equation]{Lemma}
\newtheorem{prop}[equation]{Proposition}
\newtheorem{cor}[equation]{Corollary}
\newtheorem*{conj}{Voevodsky's Conjecture 8}
\theoremstyle{remark}
\newtheorem{rmk}[equation]{Remark}
\newtheorem{notation}[equation]{Notation}
\newtheorem{substuff}{Remark}[equation]
\newtheorem{subrem}[substuff]{Remark}
\theoremstyle{definition}
\newtheorem{defn}[equation]{Definition}
\newtheorem{exam}[equation]{Example}
\newtheorem{variant}[equation]{Variant}
\newcommand{\Hom}{\mathrm{Hom}}
\newcommand{\Proj}{\mathbb P}
\newcommand{\spherespectrum}{\mathbf{1}}
\newcommand{\stablehomotopy}{\mathcal{SH}}
\newcommand{\fnteff}{\mathrm{eff}}
\newcommand{\eff}{\stablehomotopy^{\fnteff}}
\def\lra{\longrightarrow}
\def\map#1{\, {\buildrel #1 \over \lra}\, }
\def\lmap#1{\,{\buildrel #1 \over \longleftarrow}\,}
\newcommand{\Spec}{\operatorname{Spec}}
\newcommand{\Z}{\mathbb{Z}}
\newcommand{\Q}{\mathbb{Q}}
\newcommand{\C}{\mathbb{C}}
\newcommand{\Pinfty}{\mathbb P^{\infty}} 
\newcommand{\mathdot}{{\mathbf{\scriptscriptstyle\bullet}}}
\def\oo{\otimes}
\def\gr{\text{gr}}
\newcommand{\SH}{\stablehomotopy}
\def\threerightarrows{\ \vbox to8pt{\hbox{$\rightarrow$}\kern-8.0pt
     \hbox{$\rightarrow$}\kern-8.0pt\hbox{$\rightarrow$}\vskip-3.6pt}\ }
\def\fourrightarrows{~\lower3.5pt\vbox{\hbox{$\rightrightarrows$}\vskip-5.2pt
    \hbox{$\rightrightarrows$}}~}
\numberwithin{equation}{section}
\begin{document}


\title{Slices of co-operations for $KGL$}

\date{\today}

\author{Pablo Pelaez}
\address{Department of Mathematics, Rutgers University, 
New Brunswick, NJ U.S.A.}
\email{pablo.pelaez@gmail.com}

\author{Charles Weibel}
\address{Department of Mathematics, Rutgers University, 
New Brunswick, NJ U.S.A.}
\email{weibel@math.rutgers.edu}


\subjclass[2010]{Primary 14F42, 19E99, 55B99, 55P42}

\keywords{Motivic Homotopy Theory, $K$-theory, 
Sphere Spectrum, Slice Filtration}


\begin{abstract}
We verify a conjecture of Voevodsky, concerning the
slices of co-operations in motivic $K$-theory.
\end{abstract}

\maketitle


\section*{Introduction}

Fix a finite-dimensional Noetherian separated base scheme $S$, and 
consider the motivic stable homotopy category $\SH(S)$ as
defined in \cite{V-ICM}. We write $KGL$ for the motivic spectrum 
representing homotopy invariant $K$-theory in $\SH(S)$. 
In this paper, we study Voevodsky's
conjecture on slices of co-operations for $KGL$
(Conjecture 8 in \cite{Vslice}), which describes the motivic slices
of the motivic spectra $KGL\wedge\cdots\wedge KGL$.
We verify the conjecture when $S$ is smooth over a perfect field
(see Theorem \ref{thm:perfect} below).

To describe the conjecture, recall that for any motivic spectrum $E$, 
Voevodsky introduced a natural ``slice'' tower 
$\cdots\to f_{q+1}E\to f_qE\to\cdots$ of motivic spectra, and defined
triangulated ``slice'' functors $s_q$ fitting into cofibration
sequences $f_{q+1}E\to f_qE\to s_qE$.
Roughly, $\{ f_qE\}$ is the analogue of the Postnikov tower in topology;
$s_q$ is the analogue of the functor $X\mapsto K(\pi_qX,q)$.

If $E$ is a topological ring spectrum, the ring $E_*E=\pi_*(E\wedge E)$
is called the {\it ring of co-operations} for $E$; the name comes
from \cite{AHS}, where $\pi_*(KU\wedge KU)$ is worked out
(see Section \ref{sec:binomial} below). 
The title of this paper comes from viewing the motivic spectrum 
$KGL\wedge KGL$ as giving rise to co-operations for $K$-theory. 
The easy part of Voevodsky's Conjecture 8 says that 
there is an isomorphism
\[
s_q(KGL\wedge KGL) \cong (T^q\wedge H\Z)\oo \pi_{2q}(KU\wedge KU).
\] 
Here $H\Z$ is the motivic Eilenberg-Mac\,Lane spectrum in $\SH(S)$, 
$T$ is the motivic space represented by the pointed projective line,
and the tensor product of a spectrum with an abelian group 
has its usual meaning (see \ref{Notation}).

Since a ring spectrum $E$ is a monoid object in spectra, co-operations
fit into a cosimplicial spectrum $N^\mathdot E = E^{\wedge\mathdot+1}$.
This is a general construction: in any category with product $\wedge$, 
a monoid $E$ determines a triple $\top X=X\wedge E$ and an 
augmented cosimplicial object $n\mapsto X\wedge E^{\wedge n+1}$
for any object $X$;  the cofaces are given by 
the unit $\spherespectrum\to E$ and
the codegeneracies are given by the product 
$E\wedge E\to E$; see \cite[8.6.4]{WH}.  
We write $N^\mathdot E$ when $X=\spherespectrum$, so
$N^{n}E = E^{\wedge n+1}$\!.


\begin{conj}[Slices of co-operations for $KGL$] \label{conjecture8}
As cosimplicial\\ motivic spectra, the $q^{th}$ slice satisfies 
\[ s_q N^\mathdot(KGL) \cong (T^q\wedge H\Z)\oo\pi_{2q} N^\mathdot(KU).   
\] 
\end{conj}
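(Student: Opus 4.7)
The plan is to establish the conjecture in two stages: first, produce a levelwise isomorphism of the stated form $s_q N^n(KGL) \cong (T^q\wedge H\Z)\otimes \pi_{2q}(KU^{\wedge n+1})$, and second, promote this to a cosimplicial isomorphism by checking compatibility with the cofaces and codegeneracies.

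For the levelwise isomorphism, I would extend the stated ``easy part'' (the $n=1$ case) by induction on $n$. Writing $N^n(KGL) = KGL \wedge N^{n-1}(KGL)$ and using the suspension relation $s_q E \cong T^q \wedge s_0(T^{-q}\wedge E)$, I would reduce to computing $s_0$ of smash powers of $KGL$. Bott periodicity $T \wedge KGL \simeq KGL$ lets us shift $T$-indices one factor at a time, and the resulting $H\Z$-module expression should match $\pi_{2q}(KU^{\wedge n+1})$ after comparison with the Adams--Harris--Switzer description of topological co-operations recalled in Section \ref{sec:binomial}. The crucial input is $s_0(KGL) \cong H\Z$, known over a perfect base field by Levine's theorem.

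Next I would verify naturality in the cosimplicial structure. The cofaces of $N^\mathdot(KGL)$ insert the unit $\spherespectrum \to KGL$, and the codegeneracies apply the multiplication $KGL \wedge KGL \to KGL$; applying $s_q$ yields maps between consecutive levels of the identifications. Using that $s_0$ is lax symmetric monoidal and sends the unit and multiplication of $KGL$ to those of $H\Z$, I would argue these induced maps coincide with the cofaces and codegeneracies of the cosimplicial abelian group $\pi_{2q} N^\mathdot(KU)$, tensored with the identity on $T^q\wedge H\Z$. Combining the levelwise isomorphisms with this naturality yields the required isomorphism of cosimplicial motivic spectra.

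The main obstacle, I expect, is exhibiting enough naturality in the levelwise identification. A noncanonical isomorphism is easy to produce by matching invariants, but a cosimplicial statement requires the isomorphism to be functorial in the maps induced by the unit and multiplication of $KGL$. This likely demands constructing $s_q N^n(KGL)$ explicitly as an $H\Z$-module built from $\pi_{2q}(KU^{\wedge n+1})$, rather than merely describing its isomorphism class, via a model that captures the Hopf algebroid structure on topological co-operations. Translating the classical binomial Hopf algebroid $(\pi_*KU,\, \pi_*(KU\wedge KU))$ into the motivic slice setting is where I anticipate the technical heart of the argument to lie.
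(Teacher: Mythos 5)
Your proposal correctly identifies the two-stage shape of the argument (levelwise isomorphism, then cosimplicial compatibility) and rightly flags the naturality problem as the hard part, but it is missing the two mechanisms that actually carry the proof, and the substitutes you suggest would not close the gaps.

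\emph{Levelwise computation.} You propose to obtain $s_q(KGL^{\wedge n+1})$ by induction, ``shifting $T$-indices one factor at a time'' via Bott periodicity $T\wedge KGL\simeq KGL$. Periodicity tells you the $T$-suspension of $KGL$ is trivial; it does not compute $KGL\wedge KGL$. The paper's actual input is the Snaith presentation (Gepner--Snaith, Spitzweck--{\O}stv{\ae}r): $KGL$ is the homotopy colimit of $\Pinfty\wedge\spherespectrum\map{b}T^{-1}\wedge\Pinfty\wedge\spherespectrum\to\cdots$. Smashing this with any oriented $E$ with multiplicative formal group law $x+y+uxy$ ($u$ a unit) and invoking the projective bundle splitting $E\wedge\Pinfty\cong\bigoplus_n E\wedge T^n$ identifies the colimit as $E\oo H$, with $H=F[1/t]$ obtained by inverting multiplication by $t$ on the ring $F$ of numerical polynomials (Theorem \ref{EK.toy}); iterating gives $KGL^{\wedge\mathdot+2}\cong KGL\oo H^{\oo\mathdot+1}$ (Corollary \ref{K.toy}). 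This gives a concrete model rather than an isomorphism-class match, which is exactly what the cosimplicial comparison requires. Note also that your placement of the hypothesis is off: Theorem \ref{thm:main}(a) (the levelwise isomorphism, together with compatibility with all operators except $\partial^0,\sigma^0$) holds over an arbitrary base; $s_0(\spherespectrum)\cong s_0(KGL)\cong H\Z$ enters only in part (b).

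\emph{Cosimplicial compatibility.} You hope that lax symmetric monoidality of $s_0$, together with the fact that it sends the unit and multiplication of $KGL$ to those of $H\Z$, identifies the cofaces and codegeneracies with those of $\pi_{2q}N^\mathdot(KU)$. That only controls the unit and product of $s_0(KGL)$ itself; it says nothing about the induced maps on $H^{\oo n}$. The paper instead argues: (i) applying the shifted path-space construction $Y^n=X^{n+1}$ to $N^\mathdot KGL$ yields precisely $KGL^{\wedge\mathdot+2}$, so the identifications from Corollary \ref{K.toy} are automatically compatible with every coface and codegeneracy except $\partial^0$ and $\sigma^0$ (Lemma \ref{not.partial0}); (ii) for $\partial^0=\eta_R\wedge KGL^{\wedge n-1}$, the hypothesis $s_0(\spherespectrum)\cong s_0(KGL)$ forces $\eta_L=\eta_R$ on $s_0$, and periodicity propagates this to all $s_q$ (Proposition \ref{partial0}); (iii) for $\sigma^0$, one checks the generators $\binom{t}{n}$: the composite $n!\cdot\sigma^0\circ\alpha_n$ vanishes, and the assumed torsionfreeness of $\Hom_{\SH(S)}(s_0(\spherespectrum),s_0(\spherespectrum))$ then forces $\sigma^0\circ\alpha_n=0$ (Proposition \ref{sigma0}). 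Your sketch engages with neither the $\partial^0$ nor the $\sigma^0$ issue, and monoidality of $s_0$ alone cannot resolve them.
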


One reformulation of this conjecture is to take the direct sum
over $q$ and use the fact that $s_*E=\oplus s_q(E)$ is a graded 
motivic ring spectrum (see \cite[3.6.13]{Pelaez}). 
It is convenient to adopt the notation that $E\oo A_*$ denotes
$\oplus_q (T^q\wedge E)\oo A_q$ for a motivic spectrum $E$
and a graded ring $A_*$. In this notation, we prove:

\begin{thm}\label{thm:perfect}
Assume that $S$ is smooth over a perfect field. Then
there is an isomorphism of cosimplicial ring spectra
\[
s_* N^\mathdot(KGL) \cong H\Z\oo \pi_{2*}N^\mathdot(KU).
\]
\end{thm}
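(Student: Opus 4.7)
My plan is to promote the ``easy part'' of Conjecture~\ref{conjecture8} stated just above Theorem~\ref{thm:perfect}, namely the level-wise isomorphism $s_q(KGL^{\wedge n+1}) \cong (T^q \wedge H\Z) \oo \pi_{2q}(KU^{\wedge n+1})$, to an isomorphism of cosimplicial ring spectra. The essential inputs are: the multiplicativity of the slice filtration \cite[3.6.13]{Pelaez}, which makes $s_* E$ a graded ring spectrum whenever $E$ is a ring spectrum and supplies natural pairings $s_* E \wedge s_* F \to s_*(E \wedge F)$; Levine's identification $s_0 \spherespectrum \cong H\Z$ over a perfect field (this is where the hypothesis enters, since it forces every slice to be an $H\Z$-module spectrum); the Adams--Harris--Switzer calculation of $\pi_* N^\mathdot(KU)$; and the known identification $s_q KGL \cong T^q \wedge H\Z$.

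First I would establish the graded ring isomorphism $s_* (KGL^{\wedge n+1}) \cong H\Z \oo \pi_{2*}(KU^{\wedge n+1})$ for each fixed $n$. The multiplicativity pairing yields a canonical comparison map
\[
s_* KGL \wedge_{H\Z} \cdots \wedge_{H\Z} s_* KGL \lra s_*(KGL^{\wedge n+1}).
\]
Because $s_* KGL \cong H\Z \oo \pi_{2*} KU$ is free as an $H\Z$-module spectrum (in each degree it is a $T$-shift of $H\Z$), a K\"unneth-type vanishing of higher Tor's should force this map to be an equivalence. Once that is in hand, the iterated tensor product over $H\Z$ collapses via the AHS calculation to $H\Z \oo \pi_{2*}(KU^{\wedge n+1})$.

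Next I would verify that these isomorphisms respect the cosimplicial structure. The cofaces of $N^\mathdot KGL$ are induced by the unit $\spherespectrum \to KGL$; applying $s_*$ produces $H\Z \to H\Z \oo \pi_{2*} KU$, which matches the corresponding coface in $H\Z \oo \pi_{2*} N^\mathdot KU$ because Levine's identification intertwines the motivic and topological units. The codegeneracies come from the multiplication $KGL \wedge KGL \to KGL$; by multiplicativity of the slice filtration these correspond on slices to the topological multiplication $KU \wedge KU \to KU$ tensored with $H\Z$. Assembling these compatibilities across all $n$ produces the desired isomorphism of cosimplicial ring spectra.

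The main obstacle I anticipate is establishing that the multiplicativity pairing $s_* E \wedge_{H\Z} s_* F \to s_*(E \wedge F)$ is an equivalence when $E = KGL^{\wedge k}$ and $F = KGL^{\wedge n+1-k}$. This K\"unneth statement need not hold for arbitrary ring spectra, but should be accessible here because each factor's slice spectrum is a free $H\Z$-module on classes of known bidegree. A natural strategy is to filter $s_* F$ by its effective slice tower, use that each subquotient is a $T$-shift of $H\Z$, and reduce the claim to the essentially tautological case $F = H\Z$; alternatively, one can argue directly with the effective covers $f_q$ and exploit the sparsity of the slices of $KGL$ to rule out higher filtration contributions.
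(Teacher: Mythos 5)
Your approach is genuinely different from the paper's, and it has a real gap that the authors deliberately engineered around. You propose to establish a K\"unneth equivalence
\[
s_*(KGL^{\wedge k}) \wedge_{H\Z} s_*(KGL^{\wedge n+1-k}) \map{\simeq} s_*(KGL^{\wedge n+1}),
\]
treating this as a routine Tor-vanishing argument because the slices of $KGL$ are free $H\Z$-modules. But this equivalence is \emph{not} a formal consequence of freeness: the multiplicativity of the slice filtration \cite[3.6.13]{Pelaez} only supplies a pairing $s_pE \wedge s_qF \to s_{p+q}(E\wedge F)$, and the assertion that summing over $p+q=n$ recovers $s_n(E\wedge F)$ is a substantive theorem about how the slice tower interacts with smash products. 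In this setting it essentially amounts to Spitzweck's results on slices of Landweber-exact spectra \cite{Sp}, which rest on the Hopkins--Morel--Hoyois theorem \cite{Hoyois}. The paper flags exactly this route in the remark following Theorem~\ref{thm:perfect} and explicitly declines to take it. Your ``filter by the effective tower and reduce to $F=H\Z$'' sketch is where this difficulty would have to be overcome, and as written it does not engage with the real obstruction (one needs to control $f_q(E\wedge F)$ in terms of the $f_p E$ and $f_r F$, not just the associated graded).

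The paper instead proves the much stronger spectrum-level statement $KGL^{\wedge n+1}\cong KGL\oo H^{\oo n}$ (Corollary~\ref{K.toy}) \emph{before} taking slices, using the projective bundle theorem and Snaith's theorem that $KGL$ is the telescope of $T^{-k}\wedge\Pinfty$. Once that is in hand, $s_*$ is applied to an explicit wedge of copies of $KGL$, so the only input about the slice functor is that it commutes with direct sums (Proposition~\ref{slice.sum})---no K\"unneth theorem is needed. This is precisely what makes the paper's argument independent of Hopkins--Morel--Hoyois. A second point you gloss over: even after identifying the underlying slices, compatibility with $\partial^0$ and $\sigma^0$ requires separate arguments (Propositions~\ref{partial0} and~\ref{sigma0}) using $s_0(\spherespectrum)\cong s_0(KGL)$ and the torsion-freeness of $[s_0\spherespectrum,s_0\spherespectrum]$; ``Levine's identification intertwines the units'' does not by itself dispose of this, because $\eta_L$ and $\eta_R$ differ as maps $KGL\to KGL\wedge KGL$ and one must work to see that their zeroth slices agree.
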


\begin{subrem}
If $S$ is over a field of characteristic~0, it seems likely that
Theorem \ref{thm:perfect} would follow from the work of 
Spitzweck \cite{Sp} on Landweber exact spectra. Such an approach
would depend heavily on the Hopkins-Morel-Hoyois theorem \cite{Hoyois}.
\end{subrem}

Conjecture 8 is intertwined with Voevodsky's conjectures 1, 7 and 10
in \cite{Vslice}, that $H\Z \leftarrow\spherespectrum\to KGL$ induces
isomorphisms
\[
H\Z \cong s_0(H\Z) \lmap{\cong} s_0(\spherespectrum)\map{\cong} s_0(KGL),
\]
and thus that 
$\Hom_{\SH(S)}(s_0(\spherespectrum),s_0(\spherespectrum))\cong H^0(S,\Z)$.
These are known to hold when the base $S$ is smooth over a perfect field,
or singular over a field of characteristic~0, by the work of 
Voevodsky and Levine (see \cite[10.5.1]{L} and \cite[11.3.6]{L}).

Here is our main result, which evidently implies Theorem \ref{thm:perfect}.

\begin{thm}\label{thm:main}
Let $S$ be a finite-dimensional separated Noetherian scheme. Then 
\\
(a) there are isomorphisms for all $n\ge0$:
\[
s_0(KGL)\oo \pi_{2*}(KU^{\wedge n}) \map{\cong} s_*(KGL^{\wedge n}).
\]
These isomorphisms commute with all of the coface and codegeneracy 
operators exept possibly $\partial^0$ and $\sigma^0$. \\
(b) Assume in addition that
$s_0(\spherespectrum)\to s_0(KGL)$ is an isomorphism in $\SH(S)$,
and that $\Hom_{\SH(S)}(s_0(\spherespectrum),s_0(\spherespectrum))$
is torsionfree.
Then the maps in (a) are the components of an isomorphism of 
graded cosimplicial motivic ring spectra:
\[
s_0(KGL)\oo \pi_{2*}N^\mathdot(KU)
\map{\cong} s_* N^\mathdot(KGL). 
\]
\end{thm}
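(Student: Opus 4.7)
The plan is to prove part~(a) by induction on $n$ using the known slice structure of $KGL$, and then to verify the compatibility of the isomorphism with $\partial^0, \sigma^0$ in part~(b) under the extra hypotheses.

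For the base case $n = 1$, Bott periodicity $KGL \simeq T \wedge KGL$ forces the standard computation $s_q(KGL) \cong T^q \wedge s_0(KGL)$ for every $q \in \Z$. Since $\pi_{2q}(KU) \cong \Z$ for each $q$, this rewrites as $s_*(KGL) \cong s_0(KGL) \otimes \pi_{2*}(KU)$, which is exactly the claim of~(a) for $n=1$. For the inductive step, observe that $KGL^{\wedge n}$ is also $T$-periodic, so $s_q(KGL^{\wedge n}) \cong T^q \wedge s_0(KGL^{\wedge n})$, and topological Bott periodicity gives $\pi_{2q}(KU^{\wedge n}) \cong \pi_0(KU^{\wedge n})$. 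The claim then reduces to the slice-zero identification
\[
s_0(KGL^{\wedge n}) \cong s_0(KGL) \otimes \pi_0(KU^{\wedge n}).
\]
To establish this I would use the multiplicativity of the slice filtration (Pelaez, \cite[3.6.13]{Pelaez}) to build a natural pairing, and compare with the topological Adams--Harris--Switzer description of $\pi_0(KU^{\wedge n})$ (Section~\ref{sec:binomial}) to force the pairing to be an isomorphism. The cofaces $\partial^i$ and codegeneracies $\sigma^i$ with $i \geq 1$ only act on factors other than the leftmost $KGL$, so they commute with the iso by naturality of the construction.

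For part~(b), the hypothesis $s_0(\spherespectrum) \cong s_0(KGL)$ lets us identify motivic coefficients canonically. Under this identification the remaining operators $\partial^0$ (inserting $\eta:\spherespectrum \to KGL$ in the leftmost slot) and $\sigma^0$ (multiplying the two leftmost copies) become maps between spectra of the form $s_0(\spherespectrum) \otimes A$ for abelian groups $A$; by naturality, the discrepancy between the two sides lies in $\Hom_{\SH(S)}(s_0(\spherespectrum), s_0(\spherespectrum)) \otimes A$. The torsion-free hypothesis reduces this discrepancy to a single integer ambiguity, which must vanish because the analogous topological identities for $KU$ pin it down.

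The main obstacle is the slice-zero identification in the inductive step of~(a): motivic slices do not enjoy a general Kunneth theorem, so the argument has to exploit both the $T$-periodicity of $KGL$ and the flatness structure of topological $KU$-co-operations. Once this step is in place, the $T$-periodic promotion and the verification of part~(b) are comparatively direct.
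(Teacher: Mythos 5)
Your proposal diverges from the paper's route at the decisive step, and that step is where the gap lies.

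For part~(a), your plan correctly reduces (via $T$-periodicity on both sides) to identifying $s_0(KGL^{\wedge n})$, but the mechanism you offer for this — ``build a natural pairing from multiplicativity of the slice filtration and compare with Adams--Harris--Switzer to force it to be an isomorphism'' — is not a proof, and you yourself flag it as the main obstacle.  Multiplicativity of the slice filtration only gives you a map, not an isomorphism, and as you note there is no K\"unneth theorem for slices that would close the argument.  The paper sidesteps this completely by working at the level of spectra \emph{before} taking slices: using the Gepner--Snaith/Spitzweck--{\O}stv{\ae}r description of $KGL$ as the Bott-localization of $\Sigma^\infty\Pinfty_+$, one shows (Theorem~\ref{EK.toy}, Corollary~\ref{K.toy}) that $KGL\wedge KGL\cong KGL\oo H$ with $H=F[1/t]$ a \emph{free} abelian group, so $KGL^{\wedge n+1}\cong KGL\oo H^{\oo n}$ is already a wedge of suspensions of $KGL$.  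Since $s_q$ commutes with direct sums (Proposition~\ref{slice.sum}), taking slices is then immediate and the slice-level isomorphism is inherited, no K\"unneth needed.  Without this wedge decomposition your inductive step does not go through.  Likewise, the compatibility for $\partial^i,\sigma^i$ with $i\ge 1$ is not quite ``by naturality'': the paper proves it by a reindexing trick (the dual path space construction, Lemma~\ref{not.partial0}) applied to the explicit cosimplicial identification $KGL^{\wedge\mathdot+2}\cong KGL\oo H^{\oo\mathdot+1}$, which your argument lacks.

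For part~(b), the overall shape (reduce to an integer and show it vanishes) is in the right spirit, but you do not identify the actual mechanism.  For $\partial^0$ the paper uses that $\spherespectrum$ is initial, so under $s_0(\spherespectrum)\cong s_0(KGL)$ the two units $\eta_L,\eta_R$ agree on $s_0$, and then propagates via the periodicity maps $u$ and $v=\eta_R(u)$ (Lemma~\ref{etaR+uv}, Proposition~\ref{partial0}).  For $\sigma^0$ the paper does \emph{not} appeal to topological identities for $KU$ pinning down an integer; instead it observes that the test map $\sigma^0\alpha_n$ is an integer in the ring $[s_0(KGL),s_0(KGL)]$ that is annihilated by $n!$ (because $n!\binom{t}{n}=t(t-1)\cdots(t-n+1)$ maps to $0$ under $\varepsilon$), and torsion-freeness then forces it to be zero (Proposition~\ref{sigma0}).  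Your sketch omits the $n!$-torsion argument, which is the whole point of the torsion-free hypothesis.
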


The case $n=0$ of Theorem \ref{thm:main}(a), that
$s_0(KGL)\oo\pi_{2*}KU\cong s_* KGL$, is immediate from the 
periodicity isomorphism $T\wedge KGL\cong KGL$ defining
the motivic spectrum $KGL$ and the formula
$\pi_{2*}KU\cong\Z[u,u^{-1}]$.  The need to pass to slices
is clear at this stage, because $\pi_{2n,n}KGL\cong K_n(S)$
for $S$ smooth over a perfect field.

The left side of Theorem \ref{thm:main} is algebraic in nature,
involving only the cosimplicial ring $\pi_{2*}(KU^{\wedge n+1})$
and $H=KU_0(KU)$.
In fact, $\pi_{2*}(KU^{\wedge n+1})$ is the cobar construction 
$C^\mathdot_\Gamma(R,R)\cong KU_*\oo H^{\oo n}$
for the Hopf algebroid $(R,\Gamma)=(KU_*,KU_*KU)$.
We devote the first four sections to an analysis of this algebra,
focussing on the rings $F=KU_0(\C\Pinfty)$ and $H$.
Much of this material is well known, and due to Frank Adams.

If $E$ is an oriented motivic spectrum, the projective bundle 
theorem says that $E\wedge\Pinfty\cong E\oo F$ and hence that
$E\wedge{\Pinfty}^{\wedge n}\cong E\oo F^{\oo n}$.
Using this, we establish a toy version of Theorem \ref{thm:main}
in Propositions \ref{toy.global} and \ref{toy.slice} that, 
as cosimplicial spectra, $KGL\wedge{\Pinfty}^{\wedge\mathdot+1}$
is $KGL\oo F^{\oo\mathdot+1}$ (the cobar construction on $F$) and
\begin{equation}\label{eq:s_q(K.Pdot)}
s_*(KGL\wedge{\Pinfty}^{\wedge\mathdot+1}) \cong 
s_*(KGL)\oo F^{\oo\mathdot+1} \cong
s_0(KGL)\oo KU_*({\C\Pinfty}^{\wedge n}).
\end{equation}

Using a theorem of Snaith (\cite{GS} and \cite{SO}), we use the
toy model to show that $KGL\wedge KGL\cong KGL\oo H$ 
and more generally (in \ref{K.toy}) that 
\begin{equation}\label{eq:K.Kdot}
KGL^{\wedge\mathdot+2} = \; KGL\oo H^{\oo\mathdot+1}
\end{equation}
as cosimplicial spectra.  Taking slices in \eqref{eq:K.Kdot}
gives the isomorphisms in Theorem \ref{thm:main}(a), and
(with a little decoding) also proves compatibility  with 
every coface and codegeneracy map except for $\partial^0$ and $\sigma^0$.
This proves Theorem \ref{thm:main}(a).

Compatibility with the coface maps $\partial^0$ is established in
Proposition \ref{partial0}, using the extra hypothesis that
$s_0(\spherespectrum)\cong s_0(KGL)$,
and compatibility with the codegeneracy
map $\sigma^0$ is established in Lemma \ref{sigma0.t},
using the torsionfree hypothesis.
This proves Theorem \ref{thm:main}(b).

\smallskip
The paper is organized as follows. In Sections \ref{sec:binomial}
and \ref{sec:modp} 
we introduce the binomial rings $F$ and $H$. As noted in 
Remark \ref{rem:F-H}, $KU_*(\C\Pinfty)=KU_*\oo F$ and
$KU_*(KU)=KU_*\oo H$.  In Section \ref{sec:algebroid}
we quickly review Hopf algebroids and the algebroid structure on
$(KU_*,KU_*KU)$. In Section \ref{sec:KU}, we recall how
$\pi_* N^{\mathdot}KU$ is the cobar complex for this algebroid,
by showing that $KU\wedge{\Pinfty}^{\wedge\mathdot+1}$ is
$KU\oo C^\mathdot(F,F)$, where $C^{\mathdot}(F,F)$ is the 
cobar complex of the Hopf algebra $F$.
Most of this material is due (at least in spirit) to Adams.

In Section \ref{sec:KGL} we show that 
$KGL\wedge{\Pinfty}^{\wedge\mathdot+1}$ is
$KGL\oo C^\mathdot(F,F)$, by mimicking the development of Section \ref{sec:KU}.
In Section \ref{sec:slice} we show that the slice functors commute 
with direct sums, and deduce \eqref{eq:s_q(K.Pdot)}. 
Section \ref{sec:E.K} establishes \eqref{eq:K.Kdot}, and 
Theorem \ref{thm:main} is established in Section \ref{sec:final}.

\begin{notation}\label{Notation}
For any graded abelian group $A$ and motivic spectrum $E$, we can
form a motivic spectrum $E\oo A$, as follows. If $A$ is free with a
basis of elements $a_i$ in degrees $d_i$, then $E\oo A$ is the wedge
of the $E\wedge T^{d_i}$, and we may regard $a_i$ as a map from
$E\wedge T^{d_i}$ to $E\oo A$. For general $A$, choose a free graded
resolution $0\to P_1\to P_0\to A\to 0$ and define $E\oo A$ to be the
cofiber of $E\oo P_1\to E\oo P_0$; Shanuel's Lemma implies that this
is independent of the choice of resolution, up to isomorphism.
The choice of a lift of a homomorphism $A\to B$ is unique up to chain
homotopy, so it yields a map $E\oo A\to E\oo  B$ unique up to homotopy.
Given homomorphisms $A\to B\to C$, this yields a homotopy between 
the composition $E\oo A\to E\oo B\to E\oo C$ and $E\oo A\to E\oo C$.
That is, this construction gives a lax functor from abelian groups to
strict motivic spectra over $E$, 
and a functor to motivic spectra over $E$.

There is a natural associative map
$(\spherespectrum\oo A)\wedge(\spherespectrum\oo B)\to
\spherespectrum\oo A\oo B$, at least if $A$ and $B$ have no summands
$\Z/2$, $\Z/3$ or $\Z/4$ \cite[IV.2.8]{WK};
it is an isomorphism if $A$ and $B$ are free abelian groups. Thus
if $E$ is a ring spectrum and $A$ is a ring, the composition
$(E\oo A)\wedge(E\oo A)\to E\oo A$ makes $E\oo A$ into a ring spectrum.
\end{notation}

\newpage
\section{Universal binomial rings}\label{sec:binomial}

Recall that a {\it binomial ring} is a subring $R$ of a $\Q$-algebra 
which is closed under the operations $r\mapsto\binom{r}{n}$.
It is a $\lambda$-ring with operations $\lambda^n(r) = {\binom rn}$.

For example, consider the subring $F$ of $\Q[t]$ consisting of
{\it numerical polynomials} -- polynomials $f(t)$ with $f(n)\in\Z$
for all integers $n\gg0$. It is well known that $F$ is free
as an abelian group, and that the $\alpha_n=\binom tn$ form a basis.
It is not hard to verify the formula that
\[
t{\binom tn} = n{\binom tn} + (n+1){\binom t{n+1}}.
\]
The general ring structure of $F$ is determined by the combinatorial identity:
\begin{equation}\label{alpha*alpha}
\alpha_i * \alpha_j = \sum_{k\le i+j}\binom{k}{k-i,k-j,i+j-k}\alpha_k.
\end{equation}
Here $\binom{k}{a,b,c}$ denotes $k!/a!b!c!$.
(To derive \eqref{alpha*alpha}, note that the left side counts pairs of 
subsets of a set with $t$ elements. If the union of an $i$-element set and a
$j$-element set has $k$ elements, the sets intersect in $i+j-k$ elements.)

The universal polynomials for $\lambda^m(\lambda^n(r))$ show that 
the numerical polynomials form a binomial ring.
In fact, $F$ is the free binomial ring on one generator $t$:
if $R$ is binomial and $r\in R$ the canonical extension of the 
universal ring map $\Z[t]\to R$ to $F\to R\otimes\Q$ factors
uniquely through a map $F\to R$.

\begin{defn}\label{def:H}
Let $H$ denote the localization $F[1/t]$ of the ring of numerical
polynomials; it is a subring of $\Q[t,1/t]$. 
\end{defn}

Here is a useful criterion for membership in $H$.

\begin{lem}
$H=F[1/t]$ is the ring of all $f(t)\in\Q[t,1/t]$ such that for any
positive integer $a$ we have $f(a)\in\Z[1/a]$.
\end{lem}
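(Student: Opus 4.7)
The plan is to verify both inclusions directly. The forward inclusion $F[1/t] \subseteq \{f : f(a) \in \Z[1/a] \text{ for all } a \geq 1\}$ is immediate: if $f = g(t)/t^k$ with $g \in F$ and $k \geq 0$, then $f(a) = g(a)/a^k \in \Z[1/a]$ because $g(a) \in \Z$. The content of the lemma is the reverse inclusion.

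For that, given $f \in \Q[t, 1/t]$ with $f(a) \in \Z[1/a]$ for every $a \geq 1$, I would write $f(t) = g(t)/t^k$ with $g(t) \in \Q[t]$ and $k \geq 0$, and look for a single integer $j \geq 0$ with $n^j g(n) \in \Z$ for every $n \geq 1$. Such a $j$ suffices: the polynomial $t^j g(t)$ will then take integer values at all positive integers, hence be a numerical polynomial (by translation invariance of $F$, or equivalently a short downward induction on the coefficients in the basis $\binom{t}{i}$ using the identities $b_j + b_{j+1} \in \Z$ starting from the top-degree coefficient), so $f = t^j g / t^{j+k} \in F[1/t] = H$.

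The core estimate I would carry out is prime-by-prime. Let $N$ be any common denominator of the coefficients of $g$, so that $g(n) \in \tfrac{1}{N}\Z$ for every $n$. Combined with $g(n) = n^k f(n) \in \Z[1/n]$, this places $g(n)$ in $\tfrac{1}{N}\Z \cap \Z[1/n]$, whose denominators involve only primes dividing both $N$ and $n$, each to exponent at most $v_p(N)$. Taking $j$ to be the maximal exponent appearing in the prime factorization of $N$ therefore yields $n^j g(n) \in \Z$ uniformly in $n \geq 1$. The only step requiring genuine attention is establishing this uniform bound; everything else is formal bookkeeping, so I expect no serious obstacle.
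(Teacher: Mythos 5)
Your proof is correct and follows essentially the same route as the paper's: both reduce to showing that a suitable power of $t$ times $f$ is a numerical polynomial, and both handle this prime by prime, with the exponent bound coming from the denominators of the coefficients (your $j = \max_p v_p(N)$ is the paper's $\nu$) absorbing primes dividing $n$, while the hypothesis $f(n)\in\Z[1/n]$ handles primes not dividing $n$.
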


\begin{proof} (Cf.\,\cite[5.3]{AHS})
Multiplying $f$ by a suitable power of $t$, we may assume that
$f(t)\in t^\nu\Q[t]$, where $\nu$ is the highest exponent of any
prime occurring in the denominators of the coefficients of $f$.
It suffices to show that $f$ is a numerical polynomial.
Fix $a>0$ and let $p$ be a prime. If $p|a$ then $p$ does not appear
in the denominator by construction; if $p\nmid a$ then $p$ does not
appear in the denominator of $f(a)$ by hypothesis. Hence
$f(a)\in\Z$, as desired.
\end{proof}


Recall that $\Q[t,1/t]$ is a Hopf algebra with $\Delta(t)=t_1t_2$.
The usual proof \cite[I.7.3]{Hart}
that the functions $\binom tn$ form a basis of the ring
$F$ of numerical polynomials $f(t)$ is easily modified to show that 
the functions $\binom{t_1}{m}\binom{t_2}{n}$ form a $\Z$-basis of the ring
of numerical polynomials $f(t_1,t_2)$ in $\Q[t_1,t_2]$. 
Identifying $\Q[t_1,t_2]$ with $\Q[t]\oo\Q[t]$, we obtain a canonical 
isomorphism between the subring $F\otimes F$ of $\Q[t]\oo\Q[t]$
and the ring of numerical polynomials in $\Q[t_1,t_2]$.

\begin{thm}\label{H=Hopf}
(a) $H$ is a Hopf subalgebra of $\Q[t,1/t]$.
\\ (b) $H$ is a binomial ring; it is the free binomial ring on a unit.
\end{thm}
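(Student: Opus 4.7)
The plan is to prove (b) first, as its content feeds into (a). For (b), I apply the preceding criterion Lemma: $f\in H$ if and only if $f(a)\in\Z[1/a]$ for every positive integer $a$. Since $\binom{f}{n}$ evaluated at $a$ is $\binom{f(a)}{n}$, closure of $H$ under $\binom{\cdot}{n}$ reduces to showing that $\Z[1/a]$ is itself a binomial ring. For this I use $\Z[1/a]=\bigcap_{p\nmid a}\Z_{(p)}\subseteq\bigcap_{p\nmid a}\Z_p$ together with the fact that each $\Z_p$ is binomial (by a direct $p$-adic valuation estimate, or by Mahler's theorem), so $\binom{x}{n}\in\Z[1/a]$ for every $x\in\Z[1/a]$. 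Freeness of $H$ on a unit then follows from the already-noted freeness of $F$ on a generator: given a binomial ring $R$ and a unit $u\in R^\times$, the universal map $F\to R$ with $t\mapsto u$ extends uniquely to $H=F[1/t]\to R$ by inverting $t$, and any ring map between binomial (hence torsion-free) rings automatically commutes with $\binom{\cdot}{n}$ via the defining identity $n!\binom{r}{n}=r(r-1)\cdots(r-n+1)$.

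For (a), I verify that the Hopf operations on $\Q[t,1/t]$ — with $t$ grouplike, so $\Delta(t)=t_1t_2$, $\epsilon(t)=1$, $S(t)=t^{-1}$ — restrict to $H$. Flatness of $H$ over $\Z$ embeds $H\otimes H$ as a subring of $\Q[t_1^{\pm 1},t_2^{\pm 1}]$. The counit condition $\epsilon(H)\subseteq\Z$ is immediate from the criterion Lemma at $a=1$. For the coproduct, I write $f=g/t^N$ with $g\in F$ and observe that $\Delta(f)=g(t_1t_2)\cdot t_1^{-N}\otimes t_2^{-N}$; the paragraph preceding the theorem identifies $g(t_1t_2)$ as a two-variable numerical polynomial (integer-valued on $\Z\times\Z$ since $g$ is integer-valued on $\Z$), hence an element of $F\otimes F\subseteq H\otimes H$. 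For the antipode, the criterion Lemma reduces the question to checking $f(1/a)\in\Z[1/a]$ for each $a\ge 1$; expanding $f$ in the basis $\{\binom{t}{i}/t^N\}$, this boils down to $\binom{1/a}{i}\in\Z[1/a]$, which is the binomial-ring property of $\Z[1/a]$ already established in (b).

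The step I expect to be the main obstacle is the antipode: it is not parallel to the coproduct verification (no numerical-polynomial-in-two-variables trick applies), and it genuinely depends on the binomial-ring property of $\Z[1/a]$ proved in (b). Once that auxiliary fact is in place, every remaining piece is formal, flowing from the criterion Lemma and the universal property of $F$.
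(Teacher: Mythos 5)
Your proof is correct and follows the same basic approach as the paper --- both arguments turn on the criterion Lemma (membership in $H$ detected by integrality at $\Z[1/a]$) combined with the observation that $\Z[1/a]$ is itself a binomial ring, and both identify $\Delta\bigl(\binom tn\bigr)=\binom{t_1t_2}{n}$ as a two-variable numerical polynomial. The one place you do genuinely more work is the antipode: the paper's proof of (a) verifies only that $\Delta$ restricts to $H\otimes H$ and is silent on $c(t)=t^{-1}$, whereas you observe that $c$ is not covered by the numerical-polynomial-in-two-variables trick and requires a separate argument. Your reduction to $\binom{1/a}{n}\in\Z[1/a]$ via the criterion Lemma (equivalently, the universal property of $H$ applied to the unit $t^{-1}$) fills this gap cleanly, and it is exactly why you need to reorder and prove (b) before (a); the paper, doing (a) first, never hits this dependency because it doesn't discuss $c$. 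Your more explicit treatment of freeness --- extending the universal map $F\to R$, $t\mapsto u$, over $t^{-1}$ and noting that any ring map between torsion-free binomial rings automatically respects $\binom{\cdot}{n}$ via $n!\binom rn=r(r-1)\cdots(r-n+1)$ --- is also a useful amplification of what the paper only asserts. In short: same strategy, but yours is the more complete write-up.
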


\begin{proof}
For (a), it suffices to show that $\Delta:\Q[t]\to \Q[t_1,t_2]$ sends
$F[1/t]\subset\Q[1/t]$ into the subring $F[1/t]\otimes F[1/t]$. 
But $\Delta$ sends $\binom tn$ to $f(t_1,t_2)=\binom{t_1t_2}{n}$, 
which is a numerical
polynomial and hence belongs to $F\otimes F$. Thus $\Delta$ sends
$t^{-k}\binom{t}{n}$ to a $\Z$-linear combination of the functions
$(t_1t_2)^{-k}\binom{t_1}{m}\binom{t_2}{n}$, which is in
$H\oo H$. 

For (b), fix $f(t)/t^m$ in $F[1/t]$, and set $g_k(t)=\lambda^k(f/t^m)$.
For each nonzero $a\in\Z$, $\Z[1/a]$ is a binomial ring, so 
$g_k(a)= \lambda^k(f(a)/a^m)$ is in $\Z[1/a]$. 
\end{proof}

\begin{thm}
As an abelian group, $H$ is free.
\end{thm}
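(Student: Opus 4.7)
The plan is to invoke Pontryagin's criterion: a countable torsionfree abelian group is free if and only if every finite subset of it is contained in a finitely generated pure subgroup. Since $H$ is a subring of $\Q[t,t^{-1}]$ it is both torsionfree and countable, so it suffices to exhibit, for each finite $S\subset H$, a finitely generated pure subgroup $G\supset S$.

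I would take $V\subset \Q[t,t^{-1}]$ to be the finite-dimensional $\Q$-subspace $\Q\cdot\{t^{-k},\ldots,t^k\}$ for $k$ sufficiently large that $S\subset V$, and set $G=H\cap V$. Purity is automatic: if $nh\in V$ for some $h\in H$ and $n\geq 1$, then $h=\tfrac{1}{n}(nh)\in V$ by $\Q$-linearity, hence $h\in G$. Since $V$ is contained in the ambient $\Q$-vector space $\Q[t,t^{-1}]$ of Laurent polynomials, $G$ is certainly torsionfree.

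The main step is to show that $G$ is finitely generated, i.e.\ is a $\Z$-lattice in $V$. An element $h\in V$ is determined by its $2k+1$ values $h(1),h(2),\ldots,h(2k+1)$ via Lagrange interpolation, and the condition $h\in H$ demands $h(n)\in\Z[1/n]$ for every $n\geq 1$. The constraints indexed by $n\leq 2k+1$ alone do not bound denominators, since each $\Z[1/n]$ is infinitely generated over $\Z$; however, applying the constraints at primes $p>2k+1$ gives the key input. Namely, $p\cdot h(p)$ is a fixed $\Z$-linear combination of $h(1),\ldots,h(2k+1)$ with coefficients whose denominators are divisors of a Vandermonde determinant depending only on $k$, and the requirement $h(p)\in\Z[1/p]$ forces congruence conditions on $h(1),\ldots,h(2k+1)$ modulo the primes other than $p$. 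Letting $p$ run over primes in suitable arithmetic progressions (Dirichlet) pins these denominators down to a uniform bound $M=M(k)$. Hence $G\subset \tfrac{1}{M}\Z\cdot\{t^{-k},\ldots,t^k\}$ is a subgroup of a finitely generated free abelian group, hence free of rank at most $2k+1$.

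The main obstacle is the explicit denominator bound in the previous paragraph. The conceptual picture (Lagrange interpolation plus $p$-adic integrality at many primes) is clean, but verifying it rigorously requires a careful analysis of the $p$-adic valuations of Lagrange coefficients and their interaction with the Vandermonde inverse, together with an invocation of Dirichlet's theorem on primes in arithmetic progressions.
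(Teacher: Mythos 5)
Your overall strategy — Pontryagin's criterion, slicing $H$ by the finite-dimensional $\Q$-subspaces $V_k=\Q\{t^{-k},\dots,t^k\}$, and showing $H\cap V_k$ is a lattice by bounding denominators — is essentially the same route the paper takes. The paper simply cites Adams--Clarke \cite[Lemma 2.2]{AC}, whose argument is precisely this (their result already handles $KU_0(KU)$, which is $KU_0\otimes H$ up to bookkeeping), together with one extra observation about $\binom{kt}{n}$ used to nail down the rank of each slice. So you have rediscovered the Adams--Clarke method rather than found a new one.

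The one place your sketch goes astray is the denominator bound, which you flag yourself as the "main obstacle." The mechanism you describe — interpolating at $1,\dots,2k+1$, feeding in $h(p)\in\Z[1/p]$ for large primes $p$, and invoking Dirichlet — does not cleanly yield the bound, and Dirichlet is a red herring. The correct and much simpler version: fix a prime $\ell$. Write $h=t^{-k}f(t)$ with $f\in\Q[t]$, $\deg f\le 2k$; the condition $h\in H$ says $f(n)\in\Z[1/n]$ for all $n\ge1$, hence $v_\ell(f(n))\ge 0$ whenever $\ell\nmid n$. Now choose $2k+1$ integers $n_0<\dots<n_{2k}$ all coprime to $\ell$ (just skip multiples of $\ell$; no need for them to be prime). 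Lagrange interpolation at these nodes expresses every coefficient of $f$ as an $\ell$-integral combination of the $f(n_i)$ divided by $\prod_{i<j}(n_j-n_i)$, whose $\ell$-adic valuation depends only on $k$ and $\ell$. For $\ell>2k$ one may take $n_i=i+1$ and the Vandermonde is a unit in $\Z_{(\ell)}$; for the finitely many $\ell\le 2k$ the valuation is bounded. Taking $M=M(k)$ to be the product of these bounded $\ell$-parts, $H\cap V_k\subset \tfrac{1}{M}\Z\{t^{-k},\dots,t^k\}$, which is the lattice statement you want, and the rest of your argument goes through. With this replacement your proof is correct and essentially the paper's.
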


\begin{proof}
For $m\le n$, let $F(m,n)$ denote the intersection of $H$
with the $\Q$-span of $t^m,...,t^n$ in $\Q[t,1/t]$. Then the proof
of \cite[2.2]{AC} goes through to prove that $F(m,n)\cong R^{1+n-m}$
and that $H$ is free abelian, given the following remark: For each $k,m,n$,
$\binom {kt}n$ is a numerical polynomial, so
$(kt)^{-m}{\binom {kt}n}$ is certainly in $F[1/t][1/k]$.
\end{proof}

\begin{rmk}\label{rem:F-H}
The ring  $KU_{\ast}\oo F$ is $KU_*(\C\Pinfty)$, and 
$KU_{\ast}\oo H$ is isomorphic to the ring $KU_0(KU)$.
These observations follow from \cite[II.3]{A} and
\cite[2.3, 4.1, 5.3]{AHS}.

A $\Z$-basis of $KU_*(KU)$ was given in \cite[Cor.\,13]{J}.
\end{rmk}

\medskip
Since we will be interested in the algebras $R\otimes F[1/t]$
over different base rings $R$, we now give a slightly different
presentation of $F_R=R\otimes F$ and $F_R[1/t]$.
As an $R$-module, $F_R$ is free with countable basis 
$\{ \alpha _{0}, \alpha _{1}, \ldots \}$,
and we are given an $R$-module map $T:F_R\to F_R$
(multiplication by $t$), defined by
\begin{equation}\label{eq:T}
T(\alpha_n)= n\alpha_n + (n+1)\alpha_{n+1}. 
\end{equation}
Note that $T(\alpha_0)=\alpha_1$.
The localization $F_R[1/t]$ is the colimit of the system 
\begin{equation}\label{seq:F}
F_R\map{T}F_R\map{T}F_R\map{T}\cdots.
\end{equation}
To describe it, we introduce the bookkeeping index $t^{-j}$ 
to indicate the $j^{th}$ term in this sequence.
By \cite[2.6.8]{WH}, 
there is a short exact sequence of $R$-modules:
\[
0\to \bigoplus_{j=0}^{\infty}F_R\;t^{-j} \map{\Phi}
     \bigoplus_{j=0}^{\infty}F_R\;t^{-j} \to F_R[1/t] \to 0,
\]
where $\Phi(\alpha_{n}t^{-j})=T(\alpha_n)t^{-j-1}-\alpha_n t^{-j}$.
Since $\Phi$ is $t^{-1}$-linear, we may regard $\Phi$ as
an endomorphism of the free $R[t^{-1}]$-module 
$F_R[t^{-1}]=\oplus F_R\; t^{-n}$ with basis $\{\alpha_m\}$,
with 
\[
\Phi(\alpha_n)=T(\alpha_n)t^{-1}-\alpha_n
             = (n+1)t^{-1}\alpha_{n+1}+(nt^{-1}-1)\alpha_n.
\]
By abuse of notation, we write $\alpha_n$ for the image in $F_R[1/t]$ of the
basis element $\alpha_n$ of $F_R$.
Thus $F_R[1/t]$ may be presented as the $R[1/t]$-module 
with generators $\alpha_n$ and relations
\begin{equation}\label{eq:rel}
(1-nt^{-1})\alpha_n=(n+1)t^{-1}\alpha_{n+1}, \quad n\ge0.
\end{equation}

Notice that $\alpha_{0}=t^{-1}\alpha_{1}$. It is not hard to verify
directly, beginning with \eqref{seq:F}, that
multiplication by $t^{-1}$ is an isomorphism on $F_R[1/t]$. 
It also follows directly from the ring structure on $F$ and the 
identification of the colimit with $F_R[1/t]$.

If $R$ contains no $\Z$-torsion, so that $R\subseteq R\otimes\Q$,
it is easy to see that $F_R[1/t]$ embeds in 
$F_{R\otimes\Q}[1/t]=R\otimes\Q[t,t^{-1}]$,
as we saw at the beginning of this section.
The presentation of $R\oo H$ as an $R$-module looks different when 
$R$ has $\Z$-torsion, as we shall see in Section \ref{sec:modp}.

\begin{exam}\label{R=Q}
(See \cite[2.2]{AHS})
If $R$ contains $\Q$, the presentation \eqref{eq:rel}
shows that $F_R[1/t]$ is the $R[t^{-1}]$-module with generators 
$\{\alpha_{0},\alpha_{1},\ldots\}$ modulo the relations:
\[
t^{-n}\alpha_{n+1}
=\frac{1}{(n+1)!}(1-t^{-1})(1-2t^{-1})\cdots(1-nt^{-1})\alpha_1.
\]
That is, $F_R[1/t]\cong R[t,t^{-1}]$ on 
generator $\alpha_0$ with the relations 
\[
\alpha_{n} = \binom{t}{n}\alpha_0.
\]
\end{exam}

\begin{variant}\label{variant:v=ut}
Adams uses a variant of the above construction. Fix a unit $u$ of $R$ 
and set $v=ut$, so $t=v/u$. We consider $B=uT$ to be multiplication by $v$
with \eqref{eq:T} replaced by $B(\beta_n)=nu\beta_n + (n+1)\beta_{n+1}$,
where $\beta_n=u^n\alpha_n$.
Replacing the bookkeeping index $t^{-1}$ by $v^{-1}$, \eqref{eq:rel}
becomes $(1-nu/v)\beta_n=(n+1)v^{-1}\beta_{n+1}$, 
or $(n+1)\beta_{n+1}=(t-n)u\beta_n$, and we recover
\[
\beta_{n} = u^n{\binom{v/u}n}\beta_0 = u^n{\binom tn}\beta_0.
\]
These $\beta_n$ are the elements described by Adams in \cite[II.13.7]{A} 
as the generators of $KU_*(KU)$,
regarded as a submodule of $KU_*(KU)\oo\Q=\Q[u,1/u,v,1/v]$.
\end{variant}

\bigskip
\section{The $\ell$-primary decomposition}\label{sec:modp}

In this section, we suppose that $R$ is an algebra over $\Z/\ell^\nu$
and give a basis for the colimit $H_R=F_R[1/t]$ 
of the sequence 
$F_R\map{T}F_R\map{T}F_R\map{T}\cdots$ of \eqref{seq:F}. 

Recall from Section \ref{sec:binomial} that the elements of $F$
are polynomial functions $\mathbb{N}\to\Z$, and that the
$\alpha_n=\binom tn$ form a basis of $F$.

\begin{lem}\label{lem:modular}
For each $q\in\Z$, $F\oo\Z/q$ embeds in the ring of all functions
$\mathbb{N}\to\Z/q$.
\end{lem}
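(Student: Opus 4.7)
The plan is to reduce the lemma to an injectivity statement about integer coefficients and then apply the calculus of finite differences on the basis $\alpha_n = \binom{t}{n}$.

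First I would note that $F$ is free as an abelian group (on the basis $\{\alpha_n\}_{n\ge0}$), so $F\otimes\Z/q \cong F/qF$, and the map in question is induced by the evaluation map
\[
\mathrm{ev}\colon F \longrightarrow \prod_{n\in\mathbb{N}}\Z, \qquad f\longmapsto (f(0),f(1),f(2),\dots)
\]
followed by reduction mod $q$. Thus the lemma amounts to showing: if $f=\sum_i c_i\alpha_i \in F$ satisfies $f(n)\in q\Z$ for every $n\in\mathbb{N}$, then every $c_i$ lies in $q\Z$.

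The key tool is the forward difference operator $\Delta f(t) = f(t+1)-f(t)$. Pascal's identity gives $\Delta\alpha_n = \alpha_{n-1}$, so by induction $\Delta^i\alpha_n = \alpha_{n-i}$ (interpreted as $0$ when $n<i$). Evaluating at $t=0$ yields $\Delta^i\alpha_n(0) = \delta_{i,n}$, hence for any $f = \sum c_i\alpha_i \in F$,
\[
\Delta^{i} f(0) \;=\; c_i.
\]
On the other hand, iterating the definition of $\Delta$ gives the standard identity
\[
\Delta^{i}f(0) \;=\; \sum_{k=0}^{i} (-1)^{i-k}\binom{i}{k} f(k),
\]
which is a $\Z$-linear combination of the integer values $f(0),\dots,f(i)$. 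Under the hypothesis $f(n)\in q\Z$ for all $n$, each such combination lies in $q\Z$, so $c_i\in q\Z$ for every $i$. This is exactly the assertion that $f\in qF$, so the reduced evaluation map $F/qF \to \mathrm{Fun}(\mathbb{N},\Z/q)$ is injective.

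There is really no main obstacle here: the argument is a classical application of the finite difference calculus, and the only point that needs attention is the routine identification $F\otimes\Z/q = F/qF$ made at the outset, which uses freeness of $F$ as an abelian group (established in Section~\ref{sec:binomial}). The same method will presumably reappear when bases of $H_R = F_R[1/t]$ for $R$ a $\Z/\ell^\nu$-algebra are extracted in the subsequent results of this section.
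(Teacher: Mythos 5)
Your proof is correct, and it is essentially an explicit, finite-difference version of the paper's one-line argument. The paper's proof simply observes that if $f\in F$ satisfies $f(a)\equiv0\pmod q$ for all $a>0$, then $h(t)=f(t)/q$ takes integer values at large integers, hence is itself a numerical polynomial (by the very definition of $F$), so $f=qh\in qF$. Your argument instead unwinds what it means for $h$ to lie in $F$: you compute the coefficients $c_i$ of $f$ in the binomial basis via $\Delta^i f(0)=\sum_{k=0}^{i}(-1)^{i-k}\binom{i}{k}f(k)$ and conclude $q\mid c_i$. This is precisely the standard proof (as in Hartshorne I.7.3, cited in the paper) that integer-valued polynomials have integer binomial coefficients, specialized to the present situation. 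So the two proofs rest on the same fact; the paper invokes it implicitly through the definition of $F$, while you reprove it. Either route is fine; the paper's is terser, yours is more self-contained.

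One small point worth flagging: you evaluate $\Delta^i f$ at $0$, which presumes the hypothesis holds for $n=0$ (i.e., that $\mathbb{N}$ contains $0$). The paper's phrasing "for every $a>0$" suggests the hypothesis may only be given at positive integers. This is harmless: since $\Delta^{\deg f+1}f=0$, the relations $\Delta^i f(1)\equiv 0\pmod q$ together with downward induction on $i$ still force all $c_i\equiv 0$; alternatively, Lagrange interpolation shows $f(0)$ is a $\Z$-linear combination of $f(1),\dots,f(\deg f+1)$, so $f(0)\equiv0$ anyway. You may want to add a word to that effect if the intended domain excludes $0$.
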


\begin{proof}
If $f\in F$ satisfies $f(a)\equiv0\pmod{q}$ for every $a>0$
then $h(t)=f(t)/q$ is a numerical polynomial, 
and $f(t)=q\,h(t)$ is in $qF$.
\end{proof}

\begin{exam}\label{R=Z/2}
Suppose that $\Z/2\subseteq R$. Then the relations
$\alpha_{2k}=t^{-1}\alpha_{2k+1}$ and $(1-t^{-1})\alpha_{2k+1}=0$ imply that
$H_R=F_R[1/t]$ is the free $R$-module with basis 
$\{\alpha_{2k+1}, k\ge0\}$, with $\alpha_{2k}=\alpha_{2k+1}$.

In fact, $F/2F$ and $H/2H$ are Boolean rings by Lemma \ref{lem:modular}. 
In particular, each $\alpha_n$ is idempotent in the ring $F/2F$,
including $t=\alpha_1$, and $H/2H=t\,F/2F$ is a factor ring of $F/2F$. 
In addition, if $m\le2^r-1$ then
\eqref{alpha*alpha} implies that $\alpha_m\alpha_{2^r-1}=\alpha_{2^r-1}$.
\end{exam}

When $R$ is a $\Z/\ell^\nu$-algebra, $F_R$ has a similar block decomposition.
To prepare for it, 
let $L'$ denote the 
free $R$-module on basis $\alpha_0, \alpha_1,...,\alpha_{\ell-1}$,
and let $L$ denote the submodule of $L'$ on $\alpha_1,...,\alpha_{\ell-1}$. 
Following \eqref{eq:T}, 
we define maps $b_k:L'\to L'$ by 
\begin{equation}\label{eq:b_k}
b_{k}(\alpha_i)=(k\ell+i)\alpha_i + (k\ell+i+1)\alpha_{i+1},
	 \quad i=0,...,\ell-2
\end{equation}
and $b_k(\alpha_{\ell-1})=(k\ell+\ell-1)\alpha_{\ell-1}$. 
Note that $b_k(L)\subseteq L$.

\begin{lem}\label{M[1/b]}
The maps $L\to L[1/b_k]\to L'[1/b_k]$ 
are isomorphisms for all $k$.
\end{lem}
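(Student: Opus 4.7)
The plan is to show both maps are isomorphisms by analyzing the action of $b_k$ on $L$ and on the quotient $L'/L$ separately, using that $R$ is a $\Z/\ell^\nu$-algebra (so an integer is a unit in $R$ iff it is coprime to $\ell$, and multiples of $\ell$ are nilpotent).

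First I would consider $b_k$ restricted to $L$. In the ordered basis $\alpha_1,\ldots,\alpha_{\ell-1}$, the formula \eqref{eq:b_k} shows that $b_k|_L$ is upper triangular, with diagonal entries $k\ell+1,k\ell+2,\ldots,k\ell+\ell-1$. None of these is divisible by $\ell$, so each is a unit in the $\Z/\ell^\nu$-algebra $R$; hence $b_k|_L$ is already invertible on $L$ before any localization. Consequently $L\to L[1/b_k]$ is an isomorphism.

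Next I would analyze the quotient $L'/L$, which is free of rank one on the image of $\alpha_0$. Since $b_k(\alpha_0)=k\ell\,\alpha_0+(k\ell+1)\alpha_1\equiv k\ell\,\alpha_0\pmod{L}$, the induced map on $L'/L\cong R$ is multiplication by $k\ell$. But $k\ell$ is nilpotent in $R$ (indeed $(k\ell)^\nu=0$), so $(L'/L)[1/b_k]=0$.

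Finally I would combine these using exactness of localization applied to the short exact sequence $0\to L\to L'\to L'/L\to 0$: this yields $0\to L[1/b_k]\to L'[1/b_k]\to (L'/L)[1/b_k]\to 0$, and the third term vanishes. Hence $L[1/b_k]\to L'[1/b_k]$ is an isomorphism, and composing with the first isomorphism gives the lemma. There is no real obstacle here; the only thing to be careful about is checking that none of the diagonal coefficients $k\ell+i$ for $1\le i\le\ell-1$ is divisible by $\ell$, which is immediate from the range $0<i<\ell$.
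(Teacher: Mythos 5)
Your argument is correct and is essentially the same as the paper's: the paper likewise observes that $b_k|_L$ has unit diagonal entries (hence is an automorphism of $L$) and that $k\ell$ is nilpotent (the paper phrases this as $(b_k)^\nu$ sending $\alpha_0$ into $L$, whereas you package it as $(L'/L)[1/b_k]=0$ and invoke exactness of the filtered colimit). One trivial slip: the matrix of $b_k|_L$ in the basis $\alpha_1,\ldots,\alpha_{\ell-1}$ is \emph{lower} triangular, not upper triangular, though this does not affect the determinant computation.
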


\begin{proof}
The restriction of $b_k$ to $L$ is represented by a 
lower triangular matrix, whose determinant 
$\prod_{i=1}^{\ell-1}(k\ell+i)$ is a unit in $R$. Thus
each $b_k$ restricts to an automorphism of $L$.
Since $\ell^\nu=0$ in $R$, $(b_k)^{\nu}$ maps $\alpha_0$ into $L$.
The result is now straightforward.
\end{proof}

We can now describe the $R$-module $H_R=R\oo F_R[1/t]$ when $R=\Z/\ell$; 
the case $\ell=2$ was given in Example \ref{R=Z/2}.
The maps $\phi_k:L'\to F_R$, 
$\phi_k(\alpha_i)=\alpha_{k\ell+i}$, induce an isomorphism 
$\oplus\phi_k:\bigoplus_{k=0}^\infty L' \to F_R$ under which
the map $\oplus b_k$ is identified with the map $T$ of \eqref{eq:T}. 

\begin{cor}\label{R=Z/l}
If $\Z/\ell\subseteq R$ with $\ell$ prime, the map
$\oplus\phi_k: \bigoplus_{k=0}^\infty L \to H_R$ is an isomorphism.
Thus the elements $\alpha_n$, $n\not\equiv0\pmod{\ell}$ form a basis of $H_R$.
\end{cor}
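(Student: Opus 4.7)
The plan is to leverage the $R$-module isomorphism $\oplus\phi_k:\bigoplus_{k=0}^\infty L' \xrightarrow{\cong} F_R$ (which is automatic, since the $\alpha_n$ form an $R$-basis of $F_R$) to turn the colimit defining $H_R$ into a block-diagonal calculation to which Lemma \ref{M[1/b]} applies directly.

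First, I would check that under the identification $F_R \cong \bigoplus_k L'$, the multiplication-by-$t$ map $T$ of \eqref{eq:T} corresponds to $\oplus_k b_k$. Comparing \eqref{eq:T} with \eqref{eq:b_k}, the two maps clearly agree on $\alpha_{k\ell+i}$ for $0\le i\le \ell-2$, since both send it to $(k\ell+i)\alpha_{k\ell+i}+(k\ell+i+1)\alpha_{k\ell+i+1}$, which stays inside the $k$th block. The only place where $T$ could leave a block is at $i=\ell-1$, where
\[
T(\alpha_{k\ell+\ell-1}) = (k\ell+\ell-1)\,\alpha_{k\ell+\ell-1} + (k+1)\ell\,\alpha_{(k+1)\ell}.
\]
But $(k+1)\ell = 0$ in $R$ by the hypothesis $\Z/\ell\subseteq R$, so the cross-block term vanishes and $T$ agrees with the truncated definition $b_k(\alpha_{\ell-1}) = (k\ell+\ell-1)\alpha_{\ell-1}$. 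Thus $T=\oplus_k b_k$.

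Second, since filtered colimits commute with direct sums,
\[
H_R = F_R[1/t] = \mathrm{colim}\bigl(F_R \map{T} F_R \map{T} \cdots\bigr)
\cong \bigoplus_{k=0}^\infty \mathrm{colim}\bigl(L' \map{b_k} L' \map{b_k} \cdots\bigr)
= \bigoplus_{k=0}^\infty L'[1/b_k].
\]
Lemma \ref{M[1/b]} then identifies each summand $L'[1/b_k]$ with $L$. Tracing through, the basis element $\alpha_i$ ($1\le i\le \ell-1$) in the $k$th copy of $L$ corresponds via $\phi_k$ to $[\alpha_{k\ell+i}]\in H_R$; that is, the composite isomorphism $\bigoplus_k L \xrightarrow{\cong} H_R$ coincides with $\oplus\phi_k$. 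It follows that the elements $\alpha_n$ with $n\not\equiv 0\pmod\ell$ form an $R$-basis of $H_R$, which is the conclusion of the corollary.

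The main (and only) obstacle is the verification that $T$ is block-diagonal under $\oplus\phi_k$, which is exactly the step requiring $\ell=0$ in $R$; once this is in hand, the rest is a formal combination of the commutation of colimits with direct sums and Lemma \ref{M[1/b]}. Note that the argument does not extend verbatim to general $\Z/\ell^\nu$-algebras, because then the boundary coefficient $(k+1)\ell$ need not vanish and $T$ genuinely mixes blocks—hence the restriction to $\nu=1$ in the statement.
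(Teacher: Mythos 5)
Your proof is correct and takes essentially the same approach as the paper: identify $T$ with $\oplus b_k$ under $\oplus\phi_k$, pass the localizing colimit through the direct sum block by block, and apply Lemma \ref{M[1/b]}. You make explicit the key computation that the cross-block term $(k+1)\ell\,\alpha_{(k+1)\ell}$ in $T(\alpha_{k\ell+\ell-1})$ vanishes because $\ell=0$ in $R$ --- a point the paper asserts without detail --- and your closing remark correctly identifies why this step genuinely fails when $\nu>1$, which is exactly why the paper needs the separate filtration argument of Theorem \ref{H/q.basis}.
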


\begin{proof}
The maps $\phi_k:L'\to F$
satisfy $\phi_k\circ\oplus b_k = T\circ\phi$. 
Hence the map $\oplus\phi_k$ induces an isomorphism between 
$\bigoplus L\cong\bigoplus L'[1/b_k]$ and $H_R=F_R[1/t]$.
\end{proof}

\begin{thm}\label{H/q.basis}
If $R$ is a $\Z/\ell^\nu$-algebra, the $R$-module map
$\oplus\phi_k:\bigoplus L \map{\cong}H_R$ is an isomorphism.
In particular,
the elements $\alpha_n$, $n\not\equiv0\pmod{\ell}$, $n>0$, form a basis.
\end{thm}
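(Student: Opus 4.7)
The plan is to argue by induction on $\nu$, using Corollary \ref{R=Z/l} as the base case $\nu = 1$.

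The first step is to reduce to the universal case $R = \Z/\ell^\nu$. Both sides of the claimed isomorphism are obtained from their integral versions by base change: writing $L_\Z$ for the free $\Z$-module on $\alpha_1, \dots, \alpha_{\ell-1}$, we have $\bigoplus_k L = R \otimes_\Z (\bigoplus_k L_\Z)$, and $H_R = R \otimes_\Z H$ because $F$ is $\Z$-flat and localization at $t$ is exact. Since $\ell^\nu$ annihilates both sides, the tensor product with $R$ may equivalently be taken over $\Z/\ell^\nu$. The map $\oplus\phi_k$ is defined simply by labeling basis elements, and is therefore the base change of its $\Z/\ell^\nu$-analogue; so an isomorphism at $R=\Z/\ell^\nu$ yields, upon tensoring over $\Z/\ell^\nu$ with any $\Z/\ell^\nu$-algebra $R$, the corresponding isomorphism for $R$.

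Fixing $R=\Z/\ell^\nu$, I would fit the assertion into a five-lemma argument using the short exact sequence of abelian groups
\[
0 \to \Z/\ell^{\nu-1} \map{\ell} \Z/\ell^\nu \to \Z/\ell \to 0,
\]
whose first map is multiplication by $\ell$. Since $H$ is $\Z$-flat (indeed $\Z$-free), tensoring with $H$ preserves this sequence, yielding
\[
0 \to H_{\Z/\ell^{\nu-1}} \to H_{\Z/\ell^\nu} \to H_{\Z/\ell} \to 0.
\]
The functor $R \mapsto \bigoplus_k L_R$ is a countable direct sum of copies of $(-)^{\ell-1}$, hence is also exact, providing the analogous short exact sequence for the left-hand side. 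Naturality of $\oplus\phi_k$ in the coefficient group assembles these two sequences into a commutative ladder with exact rows.

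The outer vertical maps in the ladder are isomorphisms: the left one by the inductive hypothesis applied to the $\Z/\ell^{\nu-1}$-algebra $\Z/\ell^{\nu-1}$, and the right one by Corollary \ref{R=Z/l}. The five-lemma then forces the middle vertical map to be an isomorphism, completing the induction. The main obstacle is essentially bookkeeping: verifying the naturality of $\oplus\phi_k$ under base change and the exactness of the tensored sequences. The substantive combinatorial content is already contained in the base case, where modulo $\ell$ one identifies $T$ with $\oplus b_k$ via $\oplus\phi_k$.
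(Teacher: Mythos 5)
Your proof is correct, and it takes a genuinely different route from the one in the paper. The paper argues directly over an arbitrary $\Z/\ell^\nu$-algebra $R$: it decomposes $F_R$ into blocks $F_n$ on $\{\alpha_{nq+i} : 0\le i<q\}$ (each of size $q=\ell^\nu$, stable under $T$, and mutually isomorphic), reduces to the single block $F_0$, filters $F_0$ by the submodules $F_0^{\ge k\ell}$ whose successive quotients are copies of $L'$ with $T$ acting as $b_k$, invokes Lemma \ref{M[1/b]} on each quotient after localization, and finishes by induction on the filtration index $k$. You instead reduce to the universal coefficient ring $\Z/\ell^\nu$ by base change, then run an induction on $\nu$ using the short exact sequence $0\to\Z/\ell^{\nu-1}\map{\ell}\Z/\ell^\nu\to\Z/\ell\to 0$, the $\Z$-flatness of $H$ and $\bigoplus L_\Z$, naturality of $\oplus\phi_k$ in the coefficient group, and the five lemma, with Corollary \ref{R=Z/l} as the base case. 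Your argument is shorter and more formal, pushing all the combinatorial content into the case $\nu=1$; the paper's argument stays inside the module $F_R$ and produces the block and filtration structure explicitly, which is a little more illuminating (and is what makes it plain that $T$ is locally nilpotent on the ``killed'' pieces and an automorphism on $L$). One small inaccuracy: the $\Z$-flatness of $F$ is not what gives $H_R\cong R\otimes_\Z H$ — that identity is just localization commuting with $R\otimes_\Z(-)$ since both are filtered colimits. Flatness of $H$ is what you actually use, in the exactness of the bottom row of your ladder.
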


\begin{proof}
Let $F_n$ denote the free $R$-submodule of $F_R$ on basis 
$\{\alpha_{nq+i}: 0\le i<q\}$, where $q=\ell^\nu$.
Then $F_R$ is the direct sum of the $F_n$.
Since $T(\alpha_{nq-1})=(nq-1)\alpha_{nq-1}$,
$T$ sends $F_n$ into itself, and
there are isomorphisms $F_0\to F_n$,
$\alpha_i\mapsto\alpha_{nq+i}$ commuting with $T$.
Therefore $H_R=F_R[1/t]$ is isomorphic to $\bigoplus F_n[1/t]$, 
and it suffices to show that $F_0[1/t]$ is free on the $\alpha_n$ 
with $0<n<q$ and $n\not\equiv0\pmod{\ell}$.

For $k=0,...,\ell^{\nu-1}$, let $F_0^{\ge k\ell}$ denote the $R$-submodule 
of $F_0$ generated by the $\alpha_i$ with $k\ell\le i<q$. These form a 
filtration of $F_0$, and the maps $\phi_k:L'\to F_0^{\ge k\ell}$ induce 
$R$-module isomorphisms with the filtration quotients
\[ \bar\phi_k: L' \map{\cong} F_0^{\ge k\ell}/F_0^{\ge(k+1)\ell} \]
such that $T\circ\bar\phi_k = b_k\circ \phi_k$. 
By Lemma \ref{M[1/b]}, it follows that $\phi_k$ induces an isomorphism
\[
L \cong L'[1/b_k] \map{\cong} F_0^{\ge k\ell}/F_0^{\ge k\ell+\ell}[1/t].
\]
By induction on $k$, it follows that $\oplus\phi_k$ induces
an isomorphism $\bigoplus L \cong F_0[1/t]$.
\end{proof}

\section{Hopf algebroids}\label{sec:algebroid}

\medskip\noindent
Recall from \cite[A1.1.1]{R} that a Hopf algebroid $(R,\Gamma)$ is a pair of
commutative rings, with maps $\eta_L,\eta_R:R\to\Gamma$, 
$\varepsilon:\Gamma\to R$, $c:\Gamma\to\Gamma$ and
$\Delta:\Gamma\to\Gamma\oo_R\Gamma$ satisfying certain axioms,
listed in {\it loc.\ cit.}

Let $M$ be a $\Gamma$-comodule with structure map 
$M\map{\psi}M\oo_R\Gamma$. 
Recall \cite[A1.2.11]{R} that the {\it cobar complex} 
$C_\Gamma^\mathdot(M,R)$ is the cosimplicial comodule 
with $C^0=M$ and $C^n=M\oo_R\Gamma^{\oo_Rn}$, with cofaces given by 
$\psi$, $\eta_L$ and $\Delta:\Gamma\to\Gamma\oo_R\Gamma$. 
In particular, when $M=R$ (with $\psi=\eta_R$),
the cobar complex is a cosimplicial ring.  

\begin{exam}\label{triv.algebroid}
For any commutative algebra $R$, $(R,R\oo R)$ is a Hopf algebroid with
$\eta_L(r)=r\oo1$, $\eta_R(r)=1\oo r$, $c(r\oo s)=s\oo r$ and
$\Delta(r\oo s)=r\oo1\oo s$. The cobar complex $C_{R\oo R}^\mathdot(R,R)$
is the standard cosimplicial module $n\mapsto R^{\oo n+1}$.

If $\Gamma$ is a Hopf algebra over $R$, then $(R,\Gamma)$ is a Hopf
algebroid with $\eta_L=\eta_R$ the unit, $\varepsilon$ the counit,
$c$ the antipode and $\Delta$ the coproduct. In this case, the cobar
complex is classical.
\end{exam}

\begin{exam}\label{RH.algebroid}
The pair $(R_\Q,\Gamma_\Q)=(\Q[u,1/u],\Q[u,1/u,t,1/t])$ is
a Hopf algebroid with $\eta_L(u)=u\oo1$, $\eta_R(u)=c(u)=tu$,
$c(t)=1/t$, $\varepsilon(t)=1$ and $\Delta(t)=t\oo t$.

Recall from Definition \ref{def:H} that $H$ is the subalgebra 
of $\Q[t,1/t]$ generated by the $\binom tn$.  
If $R=\Z[u,1/u]$ and $\Gamma=R\oo H$, then
$(R,\Gamma)$ is a sub-Hopf algebroid. If we set $v=tu$ then
we have $\eta_R(u)=v$, $c(u)=v$ and
\[
\Delta(u)=u\oo1,\; \Delta(t)=t\oo t,\; \Delta(v)=ut\oo t = 1\oo v.
\]
Of course, $KU_*=R$ and $\pi_*(KU\wedge KU)=\Gamma$,
and the formulas given by Adams in \cite[II.13.4]{A} show that
$(R,\Gamma)$ is the original Hopf algebroid $(KU_*,KU_*KU)$.
\end{exam}



\begin{lem}\label{algebroid}
The Hopf algebroid $(R,R\oo H)$ of Example \ref{RH.algebroid} is split.
\end{lem}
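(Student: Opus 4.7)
The plan is to unwind the definition of a split Hopf algebroid and recognize that $H$ itself, already shown to be a Hopf algebra in Theorem \ref{H=Hopf}(a), is the splitting Hopf algebra. Recall from \cite{R} that a Hopf algebroid $(A,\Gamma)$ is split when there is a Hopf algebra $\Sigma$ over a base ring $K$, a ring map $K\to A$, and a right $\Sigma$-coaction $\rho\colon A\to A\otimes_K\Sigma$, such that $\Gamma=A\otimes_K\Sigma$ with $\eta_L(a)=a\otimes 1$, $\eta_R=\rho$, and with $\Delta,c,\varepsilon$ on $\Gamma$ induced from those on $\Sigma$ and $\rho$.

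First I would take $K=\Z$, $\Sigma=H$, $A=R=\Z[u,1/u]$, together with the coaction $\rho\colon R\to R\otimes H$ determined by $\rho(u)=u\otimes t$. Since $t$ is a unit in $H$, this extends to $\rho(u^{-1})=u^{-1}\otimes t^{-1}$, giving a well-defined ring homomorphism.

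Next I would check that $\rho$ really is a coaction. The identities $(\rho\otimes\mathrm{id}_H)\rho=(\mathrm{id}_R\otimes\Delta_H)\rho$ and $(\mathrm{id}_R\otimes\varepsilon_H)\rho=\mathrm{id}_R$ follow immediately from the facts that $t$ is grouplike, $\Delta_H(t)=t\otimes t$, and $\varepsilon_H(t)=1$, all of which were established in Example \ref{RH.algebroid} (and are inherited from the Hopf structure of $\Q[t,1/t]$).

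Finally, I would confirm that the structure maps produced by the split construction agree with those of Example \ref{RH.algebroid}. Directly: $\eta_L(u)=u\otimes 1$; $\eta_R(u)=\rho(u)=u\otimes t=v$; the induced coproduct sends $1\otimes t$ to $(1\otimes t)\otimes_R(1\otimes t)$, matching $\Delta(t)=t\otimes t$; and the antipode is forced by $c\circ\eta_L=\eta_R$ and the antipode of $H$, giving $c(u)=v$ and $c(t)=1/t$. Since every verification reduces to known identities in the Hopf algebra $H$, the only real task is notational bookkeeping, and I expect no genuine obstacle beyond confirming the definition of ``split'' used in \cite{R}.
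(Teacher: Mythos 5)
Your proof is correct, and it takes essentially the same route as the paper: identify $H$ as the splitting Hopf algebra and observe that the grouplike element $t$ gives the $H$-comodule algebra structure $\rho(u)=u\otimes t$ on $R$. The paper compresses all of this into a single sentence by citing Ravenel's formalism (a map of Hopf algebroids $(\Z,H)\to(R,R\oo H)$ with an algebra identity on $R\oo H$), whereas you unwind the definition of ``split'' and verify the coaction axioms directly; the content is the same, and your version has the small advantage of making the coaction $\rho(u)=u\otimes t$ and the matching of structure maps ($\eta_R(u)=v$, $c(t)=1/t$, etc.) explicit for the reader.
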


\begin{proof}
The natural inclusion of the Hopf algebra $(\Z,H)$ into
$(R,R\oo H)$ is a map of algebroids \cite[A1.1.9]{R}, 
and the identity map on $R\oo H$ is an algebra map.
\end{proof}

Fix $R$ and $\Gamma\cong R\oo H$ as in Example \ref{RH.algebroid}.
Because the algebroid $(R,\Gamma)$ is split, we can say more: 
$C_\Gamma^\mathdot(M,R)=C_H^\mathdot(M,\Z)$ (see \cite[A1.2.17]{R}), 
and the latter is the usual cobar complex of $M$ as a comodule over
the Hopf algebra $H$. For $M=R$ we have the following description.

\begin{cor}\label{cobar}
The cobar complex $C_\Gamma^\mathdot(R,R)$ has $C^n=R\oo H^{\oo n}$.
The coface maps $C^n\to C^{n+1}$ are given by the units 
$\eta_L,\eta_R:R\to R\oo H=\Gamma$ of\; $\Gamma$ and $\Delta_H$.
\end{cor}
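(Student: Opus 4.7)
The plan is to unpack the definition of the cobar complex $C_\Gamma^\mathdot(R,R)$ using the fact (Lemma \ref{algebroid}) that the algebroid $(R,\Gamma)$ is split, so $\Gamma \cong R \oo H$ is obtained by base change along $\eta_L: R \to \Gamma$ from the Hopf algebra $(\Z,H)$.

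First I would identify $C^n$ at the level of modules. By the definition recalled just before Example \ref{triv.algebroid}, for $M = R$ with structure map $\psi = \eta_R$, we have $C^n = R \oo_R \Gamma^{\oo_R n} = \Gamma^{\oo_R n}$. Since the right $R$-action on $\Gamma = R \oo H$ comes via $\eta_L$, an easy induction collapses the repeated tensor products over $R$ and produces a canonical isomorphism $\Gamma^{\oo_R n} \cong R \oo H^{\oo n}$, giving the first assertion.

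Next I would translate the coface operators. In the cobar complex of \cite[A1.2.11]{R}, the cofaces are of three kinds: $d^0$ applies $\psi = \eta_R$ to the leftmost factor; each middle coface $d^i$ for $1\le i\le n$ applies $\Delta_\Gamma$ to the $i$-th copy of $\Gamma$; and $d^{n+1}$ inserts $1$ on the right via $\eta_L$. Using the explicit formulas $\Delta(u) = u \oo 1$ and $\Delta|_H = \Delta_H$ from Example \ref{RH.algebroid}, splitness shows that $\Delta_\Gamma$ is just $\Delta_H$ extended $R$-linearly through $\eta_L$. Hence, under the identification $\Gamma^{\oo_R n} \cong R \oo H^{\oo n}$, each middle coface becomes $\Delta_H$ applied to the appropriate tensor factor of $H^{\oo n}$, while the outer cofaces are tautologically $\eta_R$ and $\eta_L$.

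The main, and really only, obstacle is the bookkeeping needed to confirm that the canonical contraction $\Gamma \oo_R \Gamma \cong R \oo H \oo H$ intertwines the algebroid coproduct with the Hopf algebra coproduct, and similarly for higher $n$. This is formal given the explicit splitting, and requires no computation beyond checking that the identification respects the $R$-bimodule structure coming from $\eta_L$ on the left and $\eta_R$ on the right.
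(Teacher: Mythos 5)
Your approach is essentially the paper's: invoke the split algebroid structure of Lemma \ref{algebroid} and the resulting identification $C^\mathdot_\Gamma(M,R)\cong C^\mathdot_H(M,\Z)$ from \cite[A1.2.17]{R}, which the paper simply cites and you reprove directly by collapsing the tensors over $R$. One small slip worth flagging: the collapse $\Gamma^{\oo_R n}\cong R\oo H^{\oo n}$ works because the \emph{left} $R$-module structure on each factor $\Gamma=R\oo H$ after the first is via $\eta_L$ (the inclusion $r\mapsto r\oo 1$, making $\Gamma$ free as a left $R$-module), not the \emph{right} $R$-action, which is $\eta_R$ and is genuinely twisted; the conclusion and the identification of the coface maps are nonetheless correct.
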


\medskip
\section{Oriented topological spectra}\label{sec:KU}

This section is intended to serve as a template for the motivic
constructions which follow, and is restricted to topological spectra.
No originality is claimed for these results. 
To emphasize our analogy, we will write $\Pinfty$ for $\C\Pinfty_+$
in this section. 

If $E$ is a topological ring spectrum, then 
combining the ring structure of $E$ with
the $H$-space structure of $\Pinfty$ we obtain a natural ring spectrum
$E\wedge \Pinfty$.  
Following Adams \cite{A} and Snaith \cite{Sn}, 
the motivation for studying $F[1/t]$ comes from the computation 
of $E\wedge KU$, where $E$ is an oriented spectrum.

Recall that a commutative topological ring spectrum $E$
with unit $\spherespectrum\map{\eta_E}E$ is said to be {\it oriented} 
if there is a map $\Pinfty\wedge\spherespectrum\map{x} S^2\wedge E$ 
whose restriction to $\C\Proj^1_+\wedge\spherespectrum$
is $1\wedge\eta_E$ (using $S^2=\C\Proj^1$). 
As in \cite[II.2]{A}, this data yields a formal group
law on $E_*[[x]]\cong E^*(\Pinfty)$, and dual elements 
$\beta_n$ in $E_{2n}(\Pinfty)=\pi_{2n}(E\wedge\Pinfty)$.

When $E$ is orientable, the $\beta_n$ induce maps
\[    \xymatrix{
E\wedge S^{2n} \ar[r]^-{1\wedge\beta_n}& 
E\wedge E\wedge\Pinfty \to E\wedge\Pinfty,}
\]
and these maps induce an isomorphism from
$\bigoplus_{n=0}^{\infty}E\wedge S^{2n}$ to $E\wedge\Pinfty$
(by \cite[p.\,42]{A}). Consequently, we have
\[
E_*(\Pinfty)=\pi_*(E\wedge\Pinfty) \cong
\bigoplus_{n=0}^{\infty}\pi_*(E\wedge S^{2n}).
\]
Thus $E_*(\Pinfty)$ is a free graded $E_*$-module 
on generators $\beta_n$ in degree $2n$, $n\ge0$.

\smallskip
Let $\xi:S^{2}_+\cong \C\mathbb P^{1}_+\to\Pinfty$ 
be the map that classifies the tautological line bundle on
$\C\mathbb P^{1}$, and let 
$b:\Pinfty\wedge\spherespectrum\to S^{-2}\wedge\Pinfty\wedge\spherespectrum$
denote the adjoint of the stable map
\begin{equation*}
   \xymatrix{
S^2 \wedge\Pinfty\wedge\spherespectrum \ar[r]^-{\textrm{Hopf}} &
S^2_+\wedge\Pinfty\wedge\spherespectrum \;
\ar[r]^-{\xi\wedge1}& \;  \Pinfty\wedge\Pinfty\wedge\spherespectrum 
\ar[r]^-{m}&  \Pinfty\wedge\spherespectrum.}
\end{equation*}
Let $j=\xi-1$ denote the map $\Pinfty\to BU_+\to KU$
classifying the virtual tautological line bundle of rank $0$.
By Snaith \cite[2.12]{Sn}, the map $j$ induces an equivalence
from the homotopy colimit of the sequence
\begin{equation*}\label{KU.tower}
\Pinfty\wedge\spherespectrum \map{b} 
S^{-2}\Pinfty\wedge\spherespectrum\; \map{S^{-2}b\;} 
S^{-4}\Pinfty\wedge\spherespectrum \map{} \cdots.
\end{equation*}
to the spectrum $KU$. 
We write $u=u_{KU}$ for the map $S^2\to\Pinfty\map{j}KU$.
The map
\[
S^2\wedge KU \map{u\wedge1} KU\wedge KU \map{m} KU
\]
is the periodicity isomorphism for $KU$ (cf. \eqref{def:u}). 
Composing $j$ with the inverse of the periodicity isomorphism, we obtain 
an orientation for $KU$ whose formal group law is
$x+y+uxy$.

Smashing with $E$ yields maps
$b_E:E\wedge\Pinfty\to S^{-2}\wedge E\wedge\Pinfty$ 
and an equivalence between $E\wedge KU$ and 
the homotopy colimit of the sequence of spectra:
\begin{equation}\label{seq.spectra.1}
      \xymatrix{E\wedge \Pinfty \ar[r]^-{b_E}& 
   S^{-2}\wedge E\wedge \Pinfty \ar[r]^-{S^{-2} b_E}& 
   S^{-4}\wedge E\wedge \Pinfty \ar[r]& \cdots.}
\end{equation}

\smallskip
We now make the connection with the algebraic considerations
of Section \ref{sec:binomial}.

\begin{lem}\label{aijk}
The coefficient of $x^iy^j$ in $(x+y+uxy)^k$ is
$\binom{k}{k-i,k-j,i+j-k}u^{i+j-k}$.
\end{lem}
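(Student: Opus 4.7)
The plan is simply to expand $(x+y+uxy)^k$ by the ordinary multinomial theorem and read off the coefficient of $x^iy^j$. Writing
\[
(x+y+uxy)^k = \sum_{\substack{a,b,c\ge 0 \\ a+b+c=k}} \binom{k}{a,b,c}\, x^a\, y^b\, (uxy)^c
= \sum_{\substack{a+b+c=k}} \binom{k}{a,b,c}\, u^c\, x^{a+c}\, y^{b+c},
\]
the coefficient of $x^iy^j$ comes from those triples with $a+c=i$ and $b+c=j$.

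Combining these with $a+b+c=k$ gives a nonsingular linear system with unique solution $a=k-j$, $b=k-i$, $c=i+j-k$. Substituting back yields the coefficient $\binom{k}{k-j,k-i,i+j-k}\,u^{i+j-k}$, which (by symmetry of the multinomial in its lower entries) agrees with the claimed expression $\binom{k}{k-i,k-j,i+j-k}\,u^{i+j-k}$.

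The only thing to check is edge cases: if any of $k-i$, $k-j$, or $i+j-k$ is negative, there is no valid triple $(a,b,c)$, so the coefficient of $x^iy^j$ is $0$; this matches the convention that the multinomial coefficient $\binom{k}{k-i,k-j,i+j-k}$ vanishes whenever one of its entries is negative. Thus no genuine obstacle arises; the lemma is essentially a bookkeeping exercise whose role is to interpret the formal group law $x+y+uxy$ in terms of the combinatorial identity \eqref{alpha*alpha}, so that the $\beta_n$ in $KU_*(\C\Pinfty)$ can later be identified with the basis $u^n\alpha_n$ of $KU_*\oo F$ from Variant \ref{variant:v=ut}.
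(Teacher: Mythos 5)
Your proof is correct and takes essentially the same route as the paper: the paper's two-sentence argument is just the counting version of the multinomial theorem you write out, identifying $k-j$ factors as $x$, $k-i$ as $y$, and $i+j-k$ as $uxy$. Your extra remarks on the degenerate cases and the symmetry of the multinomial coefficient are fine but not needed.
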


\begin{proof}
The product has $3^k$ terms. The only terms producing the monomial 
$x^iy^j$ are those with exactly $k-j$ of the factors must be $x$,
exactly $k-i$ must be $y$, and the remaining $i+j-k$ must be $uxy$.
\end{proof}

\begin{cor}\label{E.Pdot}
Suppose that $E$ is an oriented spectrum with a 
multiplicative group law $\mu(x,y)=x+y+uxy$ such that $u$ is a 
unit of $E_*$. Then 
the homotopy groups of the cosimplicial spectrum 
$E\wedge{\Pinfty}^{\wedge\mathdot+1}$ form a cosimplicial group
$E_*\oo F^{\oo\mathdot+1}\!,$ isomorphic to
the cobar complex $E_*\oo C_{F\oo F}^\mathdot(F,F)$ 
of Example \ref{triv.algebroid}. 
\end{cor}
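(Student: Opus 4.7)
The plan is to identify the levelwise homotopy groups of $E\wedge\Pinfty^{\wedge n+1}$ and then check that the cofaces and codegeneracies match those of the Amitsur complex $C_{F\oo F}^\mathdot(F,F)$ from Example~\ref{triv.algebroid}. The orientation-based splitting $E\wedge\Pinfty\simeq\bigvee_{n\ge 0}E\wedge S^{2n}$ recalled above gives $E_*(\Pinfty)$ as a free $E_*$-module on the classes $\beta_n$. Once $E_*(\Pinfty)\cong E_*\oo F$ is identified as an $E_*$-algebra, freeness lets us iterate the Künneth isomorphism to obtain $\pi_*(E\wedge\Pinfty^{\wedge n+1})\cong E_*\oo F^{\oo n+1}$ for every $n\ge 0$.

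To upgrade the additive identification $E_*(\Pinfty)\cong E_*\oo F$ to a ring isomorphism, I would exploit the formal group law supplied by the orientation. On $E^*(\Pinfty)=E_*[[x]]$, the $H$-space multiplication pulls back to $\mu^*(x)=x+y+uxy$, so dually $\beta_i\cdot\beta_j=\sum_k a_{ij}^k\beta_k$, where $a_{ij}^k$ is the coefficient of $x^iy^j$ in $(x+y+uxy)^k$. By Lemma~\ref{aijk} this coefficient equals $\binom{k}{k-i,k-j,i+j-k}\,u^{i+j-k}$, and comparing with formula~\eqref{alpha*alpha} for the product $\alpha_i*\alpha_j$ in $F$, the rescaling $\alpha_n\mapsto u^{-n}\beta_n$ extends to a ring isomorphism $E_*\oo F\map{\cong} E_*(\Pinfty)$. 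This is the step where the hypothesis that $u\in E_*$ is a unit is essential, and it is where I expect the main computational obstacle to lie.

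For the cosimplicial structure, the cofaces of $E\wedge\Pinfty^{\wedge\mathdot+1}$ are obtained by smashing with the unit $\spherespectrum\to\Pinfty$ in the appropriate slot; this unit sends $1$ to $\beta_0=u^0\alpha_0=1\in F$ under the identification of the previous paragraph, so the induced cofaces on $E_*\oo F^{\oo\mathdot+1}$ insert $1\in F$ in the corresponding tensor factor. The codegeneracies come from the $H$-space product $\Pinfty\wedge\Pinfty\to\Pinfty$, whose effect on $E_*$ is by construction the ring multiplication of $F$ applied to two adjacent tensor slots. These are precisely the coface and codegeneracy maps of the Amitsur complex $C_{F\oo F}^\mathdot(F,F)$ built from the commutative ring $F$, so the isomorphism of $E_*$-modules from the first paragraph upgrades to an isomorphism of cosimplicial objects, as claimed.
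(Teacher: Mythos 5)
Your proof is correct and follows essentially the same route as the paper's: identify $\pi_*(E\wedge\Pinfty^{\wedge n+1})$ as the free $E_*$-module on the $\beta_{i_0}\wedge\cdots\wedge\beta_{i_n}$, use the rescaling $\beta_n=u^n\alpha_n$ to match Lemma~\ref{aijk} against formula~\eqref{alpha*alpha}, and read off the cofaces and codegeneracies. The only stylistic difference is that you derive the identity $a_{ij}^k=\text{coeff of }x^iy^j\text{ in }\mu(x,y)^k$ directly from the Kronecker duality between $E_*(\Pinfty)$ and $E^*(\Pinfty)=E_*[[x]]$, whereas the paper simply cites Adams \cite[II(3.4--5)]{A} for that fact.
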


\begin{proof}
By the above remarks, $\pi_*(E\wedge{\Pinfty}^{\wedge n+1})$ is a free 
$E_*$-module with basis $\beta_{i_0}\wedge\cdots\beta_{i_n}$, 
which we identify with $E_*\oo F^{\oo n+1}$ by sending $\beta_i$ to
$u^i\alpha_i$, as in \ref{variant:v=ut}. The coface maps are easily seen
to correspond, and for the codegeneracies it suffices to consider 
$\sigma^0:\pi_*(E\wedge\Pinfty\wedge\Pinfty)\to\pi_*(E\wedge\Pinfty).$
Adams showed in \cite[II(3.4--5)]{A} that the cofficient $a_{ij}^k$ of
$\sigma^0(\beta_i\oo\beta_j)$ is the coefficient of
$x^iy^j$ in $(x+y+uxy)^k$, which is given by Lemma \ref{aijk},
and agrees with $(u^i\alpha_i)\ast(u^j\alpha_j)$ by \eqref{alpha*alpha}.
\end{proof}

\begin{lem}\label{E.KU}
Suppose that $E$ is an oriented spectrum with a multiplicative group law
$\mu(x,y)=x+y+uxy$ such that $u$ is a unit of $E_*$. Then
\[
\pi_*(E\wedge KU)\cong\varinjlim \pi_*(S^{-2n}\wedge E\wedge\Pinfty)
\cong E_*\oo \varinjlim F \cong E_*\oo H.
\]
\end{lem}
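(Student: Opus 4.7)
The plan is to unpack the three claimed isomorphisms in order. The first, $\pi_*(E\wedge KU)\cong\varinjlim_n\pi_*(S^{-2n}\wedge E\wedge\Pinfty)$, is formal: Snaith's equivalence recalled earlier in this section expresses $E\wedge KU$ as the homotopy colimit of the sequence \eqref{seq.spectra.1}, and $\pi_*$ commutes with sequential homotopy colimits of spectra.

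The main step is to identify the second colimit with $E_*\oo F[1/v]$, where $v=ut$. Via the projective bundle decomposition recalled at the start of this section, each $\pi_*(E\wedge\Pinfty)$ is free over $E_*$ on the $\beta_n$; setting $\beta_n=u^n\alpha_n$ matches the presentation $F_R$ of Variant~\ref{variant:v=ut} with $R=E_*$. I would then compute $(b_E)_\ast$ on these generators. By definition $b$ is adjoint to
\[
S^2\wedge\Pinfty\wedge\spherespectrum\to S^2_+\wedge\Pinfty\wedge\spherespectrum\map{\xi\wedge1}\Pinfty\wedge\Pinfty\wedge\spherespectrum\map{m}\Pinfty\wedge\spherespectrum;
\]
under the stable splitting $S^2_+\simeq S^0\vee S^2$, the leftmost map is the inclusion of the $S^2$-summand, and $\xi$ restricted to that summand sends the orientation generator to $\beta_1\in E_2(\Pinfty)$. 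Hence $(b_E)_\ast(\beta_n)=\beta_1\cdot\beta_n$, with $\cdot$ the $H$-space product on $E_*(\Pinfty)$. Plugging $i=1$, $j=n$ into Corollary~\ref{E.Pdot} (equivalently, Lemma~\ref{aijk}), only the terms $k=n$ and $k=n+1$ contribute, and one obtains
\[
(b_E)_\ast(\beta_n)=nu\,\beta_n+(n+1)\,\beta_{n+1},
\]
which is precisely the formula for $B(\beta_n)$ in Variant~\ref{variant:v=ut}. Thus the directed system in question is isomorphic to \eqref{seq:F} with $T$ replaced by $B$, so its colimit is $E_*\oo F[1/v]$.

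Since $u$ is a unit of $E_*$ by hypothesis, inverting $v=ut$ is the same as inverting $t$, giving $E_*\oo F[1/t]=E_*\oo H$ and the remaining two isomorphisms. The only real obstacle is the identification $(b_E)_\ast(\beta_n)=\beta_1\cdot\beta_n$: once the stable splitting $S^2_+\simeq S^0\vee S^2$ and the role of $\xi$ as carrier of the universal orientation class are made explicit, the proof collapses to the formal group law calculation already carried out in Section~\ref{sec:binomial}.
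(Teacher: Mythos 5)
Your proposal is correct and follows essentially the same route as the paper's proof: identify $E\wedge KU$ with the Snaith colimit of \eqref{seq.spectra.1}, recognize $(b_E)_\ast$ on $\pi_*(E\wedge\Pinfty)\cong E_*\oo F$ as multiplication by $\beta_1$ with the formula $nu\beta_n+(n+1)\beta_{n+1}$ of Variant~\ref{variant:v=ut}, and pass to the colimit to get $E_*\oo F[1/v]=E_*\oo H$ since $u$ is a unit. The only cosmetic difference is that you derive the multiplication-by-$\beta_1$ formula from Lemma~\ref{aijk} and Corollary~\ref{E.Pdot} (and spell out the stable splitting $S^2_+\simeq S^0\vee S^2$), whereas the paper simply cites Adams II(3.6) directly.
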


\begin{proof}
Identifying $R=E_*$ and $F_R=E_*(\Pinfty)=\pi_*(E\wedge\Pinfty)$, 
Adams shows in  \cite[II(3.6)]{A} that
multiplication by $\beta_1$ is given by the formula \eqref{eq:T},
with $t$ replaced by $v=ut$ as in Variant \ref{variant:v=ut}.
%
Comparing with \eqref{seq:F}, we see that the ring 
$E_*(\Pinfty)[1/\beta_1]$ is isomorphic to $E_*\oo H$.
From \eqref{seq.spectra.1}, we obtain the result.
\end{proof}

\begin{subrem}
Lemma \ref{E.KU} fails dramatically if $u=0$, for example when $E=H\Z$.
\end{subrem}

\smallskip
Replacing $E$ by $E\wedge KU$, 
which we orient by $x\wedge\eta_{KU}$,
Lemma \ref{E.KU} yields $\pi_*(E\wedge KU\wedge KU)\cong E_*\oo H\oo H$.
The two maps $E_*\oo H\to E_*\oo H\oo H$, induced by
$E\wedge KU \to E\wedge KU^{\wedge 2}$, are given by $\eta_E\oo1$ and
$1\oo\Delta_H$, and $E_*\oo H\oo H \map{\sigma^0} E_*\oo H$ is given
by Corollary \ref{E.Pdot}. An inductive argument establishes: 

\begin{cor}\label{pi*E.KU(n)} 
The homotopy groups of the cosimplicial spectrum 
form the cosimplicial group:
\begin{equation*}
\pi_*(E\wedge KU^{\wedge n+1}) = E_*\oo H^{\oo n+1}.
\end{equation*}
This is the cobar complex $C_\Gamma^\mathdot(M,E_*)$ 
of Example \ref{RH.algebroid} with $M=E_*\oo H$.
\end{cor}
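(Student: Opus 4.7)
The plan is induction on $n$, with base case $n=0$ supplied directly by Lemma \ref{E.KU}. For the inductive step, I would replace $E$ by $E^{(n)} := E \wedge KU^{\wedge n}$; orienting this spectrum through any of its $KU$ smash factors (say the last) gives it a multiplicative formal group law $x + y + uxy$ in which $u$ remains a unit, since $u$ is already a unit of $KU_*$ and the unit map $KU_* \to E^{(n)}_*$ sends units to units. Lemma \ref{E.KU} then applies to $E^{(n)}$ and yields
\[
\pi_*(E \wedge KU^{\wedge n+1}) = \pi_*(E^{(n)} \wedge KU) \cong E^{(n)}_* \oo H,
\]
and the inductive hypothesis $E^{(n)}_* = E_* \oo H^{\oo n}$ completes the identification of the underlying graded abelian group as $E_* \oo H^{\oo n+1}$.

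For the cosimplicial structure, the codegeneracies $\sigma^i$ come from the ring multiplication $KU \wedge KU \to KU$ between the $i$-th and $(i+1)$-th smash factors; by the base case together with Corollary \ref{E.Pdot}, these are identified with the Hopf algebra multiplication on $H$ applied to the $i$-th and $(i+1)$-th tensor factors of $H^{\oo n+1}$. The interior cofaces $\partial^i$ for $0 < i \le n$ insert a new $KU$ factor via $\spherespectrum \to KU$ between two existing $KU$ factors and are identified with the coproduct $\Delta_H$ on the $i$-th tensor factor; this is Adams's computation, contained in the case $n = 1$ applied to the appropriate adjacent pair of factors. The extremal cofaces $\partial^0$ and $\partial^{n+1}$ correspond to the two unit maps $\eta_L$ and $\eta_R$ of the Hopf algebroid $\Gamma = R \oo H$ described in Example \ref{RH.algebroid}, the first one accounting for the comodule structure on $M = E_*\oo H$ coming from smashing with the leftmost fresh copy of $KU$.

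The main obstacle is the $n = 1$ case, which bootstraps everything else: one must verify directly that the two coface maps $E \wedge KU \to E \wedge KU^{\wedge 2}$ induce $\eta_E \oo 1$ and $1 \oo \Delta_H$, and that the codegeneracy induces the multiplication on $H$. The codegeneracy identification is given by Corollary \ref{E.Pdot}; the coface identification is Adams's original computation of the Hopf algebroid $(KU_*, KU_*KU)$ in \cite[II.13]{A}, transported along the identification $KU_*KU \cong KU_* \oo H$ of Remark \ref{rem:F-H}. Once this case is in hand, the induction proceeds by naturality, and the result that $\pi_*(E \wedge KU^{\wedge n+1})$ is the cobar complex $C_\Gamma^\mathdot(M,E_*)$ of the split Hopf algebroid (Lemma \ref{algebroid}) follows.
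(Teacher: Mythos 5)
Your proposal is correct and follows essentially the same route as the paper: iterate Lemma \ref{E.KU} to identify the underlying graded groups, then read off the cosimplicial structure from Corollary \ref{E.Pdot} (codegeneracies) and the explicit $n=1$ computation (cofaces), with the split-algebroid observation of Lemma \ref{algebroid} packaging the result as the cobar complex. The one cosmetic difference is that you reorient $E^{(n)}=E\wedge KU^{\wedge n}$ through its last $KU$ smash factor, whereas the paper keeps the orientation $x\wedge\eta_{KU^{\wedge n}}$ pushed forward from $E$; since either choice yields a multiplicative formal group law with invertible $u$, Lemma \ref{E.KU} applies and the resulting identification is the same.
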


When $E=KU$, we can also form the cosimplicial spectrum 
$N^\mathdot(KU)$, as in the introduction, and we have:

\begin{prop}\label{NKU}
The cosimplicial ring $\pi_*(N^\mathdot KU)$ is
$KU_*\oo H^{\oo \mathdot}$, the cobar complex 
$C_\Gamma^\mathdot(KU_*,KU_*)$ of \ref{cobar}.
In particular, for every $n$ we have an isomorphism
\begin{equation*}\label{eq:ENdot}
\pi_*(KU^{\wedge n+1}) \cong KU_*\oo H^{\oo n}.
\end{equation*}
\end{prop}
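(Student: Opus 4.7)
The strategy is to derive Proposition \ref{NKU} from Corollary \ref{pi*E.KU(n)} by specializing to $E=KU$ and then identifying the two ``extra'' cosimplicial operators introduced by a degree shift.

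First I would set $E=KU$ in Corollary \ref{pi*E.KU(n)}. The spectrum $KU$ is oriented and its formal group law $x+y+uxy$ has $u$ a unit in $KU_*$, so the hypotheses are met. For each $m\ge 0$ this gives a natural isomorphism $\pi_*(KU\wedge KU^{\wedge m+1})\cong KU_*\oo H^{\oo m+1}$, and reindexing $n=m+1$ yields $\pi_*(KU^{\wedge n+1})\cong KU_*\oo H^{\oo n}$ for $n\ge 1$; the case $n=0$ is trivial. This establishes the claimed isomorphism at the level of abelian groups.

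Next I would match the cosimplicial structures. The cosimplicial ring structure from the corollary on $KU\wedge N^\mathdot KU$ is that of $C_\Gamma^\mathdot(KU_*\oo H,KU_*)$, which under the canonical identification $C_\Gamma^m(KU_*\oo H,KU_*)=KU_*\oo H^{\oo m+1}=C_\Gamma^{m+1}(KU_*,KU_*)$ corresponds precisely to the face/degeneracy operators of $C_\Gamma^\mathdot(KU_*,KU_*)$ with indices $\ge 1$. These are exactly the cosimplicial operators of $N^\mathdot KU$ that leave the leftmost smash factor fixed. Hence the corollary already identifies $\partial^i$ and $\sigma^i$ of $N^\mathdot KU$ with the corresponding operators of the cobar complex for every $i\ge 1$.

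It then remains to analyze $\partial^0$ and $\sigma^0$ of $N^\mathdot KU$ separately. The coface $\partial^0:KU^{\wedge n}\to KU^{\wedge n+1}$ is $\eta_{KU}\wedge\mathrm{id}^{\wedge n}$, whose effect on homotopy is the inclusion $KU_*\oo H^{\oo n-1}\hookrightarrow KU_*\oo H\oo H^{\oo n-1}$ that puts $1\in H$ in the newly introduced first factor; this agrees with the appropriate unit map ($\eta_L$ or $\eta_R$) of the cobar complex, as listed in Corollary \ref{cobar}. The codegeneracy $\sigma^0:KU^{\wedge n+2}\to KU^{\wedge n+1}$ is $\mu_{KU}\wedge\mathrm{id}^{\wedge n}$; because $\mu_{KU}\circ(\eta_{KU}\wedge\mathrm{id})=\mathrm{id}_{KU}$ and the latter unit inclusion induces the standard embedding $KU_*\hookrightarrow KU_*\oo H$, the map $(\mu_{KU})_*:KU_*\oo H\to KU_*$ must be $1\oo\varepsilon_H$, which is exactly $\sigma^0$ of the cobar complex. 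The ring structure transports automatically since each $KU^{\wedge n+1}$ is a commutative ring spectrum and the corollary identification is already a ring-level statement. The main obstacle is the bookkeeping around left/right units in the Hopf algebroid $(KU_*,KU_*H)$ under the degree shift, but this is routine thanks to the splitting of the algebroid (Lemma \ref{algebroid}).
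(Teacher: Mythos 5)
Your overall strategy — specialize Corollary \ref{pi*E.KU(n)} to $E=KU$ and then recognize the resulting object as the dual path space of $N^\mathdot KU$, so that only $\partial^0$ and $\sigma^0$ need separate treatment — is exactly right, and it mirrors what the paper does explicitly for $KGL$ in Lemma \ref{not.partial0}. The paper itself gives no proof of Proposition \ref{NKU}, treating it as Adams' classical computation; so the structure of your argument is sound. However, your analysis of the two remaining operators contains genuine errors.

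For $\partial^0$: you claim that $\partial^0=\eta_{KU}\wedge\mathrm{id}^{\wedge n}$ induces ``the inclusion that puts $1\in H$ in the newly introduced first factor.'' That describes the \emph{left} unit $\eta_L$, which is the map $\mathrm{id}\wedge\eta_{KU}$. But $\partial^0=\eta_{KU}\wedge\mathrm{id}$ is the \emph{right} unit $\eta_R$, and under the identification $\pi_*(KU\wedge KU)\cong KU_*\oo H$ (anchored on the left factor) it is the twisted map sending $u\mapsto v=ut$, not the trivial embedding. This is precisely the subtlety the paper handles via Lemma \ref{etaR+uv} and Proposition \ref{partial0} in the $KGL$ case. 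Your conclusion that $\partial^0$ matches the cobar complex is correct, but your description of the map — and hence your verification — is wrong.

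For $\sigma^0$: you argue that since $\mu_{KU}\circ(\eta_{KU}\wedge\mathrm{id})=\mathrm{id}$, the induced map $(\mu_{KU})_*:KU_*\oo H\to KU_*$ ``must be'' $1\oo\varepsilon_H$. This is a non sequitur: a split monomorphism can have many left inverses, so being a left inverse of the unit inclusion does not determine the map. One genuinely needs either Adams' explicit calculation, or an argument that exploits torsionfreeness of $KU_*\oo H^{\oo n}$ so one can check the identity rationally in $\Q[u,u^{-1},t,t^{-1}]$ — this is exactly what the paper does for $KGL$ in Lemma \ref{sigma0.t} and Proposition \ref{sigma0}. As written, this step of your argument has a real gap.
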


\bigskip
{\it $KU^{\wedge\mathdot}$ with finite coefficients}

\medskip\noindent
Fix a prime $\ell$ and a power $q=\ell^\nu$, and let $P$ denote the 
Moore spectrum for $\Z/q$. 
We will assume that $q$ is chosen so that $P$ is a 
ring spectrum with unit. For example, if $\ell\neq 2,3$ then
this is the case for any $\nu\ge1$; see \cite[IV.2.8]{WK}.
Smashing $P$ with $KU$ yields the standard cosimplicial spectrum
$P\wedge KU^{\wedge n+1}$ associated to the triple $-\wedge KU$.
Set $R=\pi_*(P\wedge KU)$; the Universal Coefficient Theorem yields
$R=\Z/q[u,1/u]$. Since $P\wedge KU$ is oriented by the orientation on $KU$,
we see from Lemma \ref{E.KU} and Theorem \ref{H/q.basis} that 
$\pi_*(P\wedge KU\wedge KU)=R\oo H$ is a free $R$-module with basis
$\{\beta_n:n\not\equiv0\pmod\ell\}$, and from \eqref{pi*E.KU(n)} that 
$\pi_*(P\wedge KU^{\wedge n+1})=R\oo H^{\oo n}$.
In fact, $(R,R\oo H)$ is the mod $q$ reduction of the Hopf algebroid
of Lemma \ref{algebroid}.

Set $E=P\wedge KU$. Then each $\beta_n$ determines a map
$S^{2n}\wedge E\to E\wedge\Pinfty\to E\wedge KU$.
By Lemma \ref{E.KU}, the direct sum of these maps is a
homotopy equivalence of spectra
\[
\bigoplus E\wedge S^{2n} \map{\simeq} E\wedge\Pinfty
\]
There is a sequence \eqref{seq.spectra.1} for $E\wedge\Pinfty$, 
with colimit $E\wedge KU$.
By Example \ref{RH.algebroid}, $\pi_*(E\wedge KU)\cong E_*\oo H$.
Since $R=\pi_*(E)$ is a $\Z/q$-module, 
Theorem \ref{H/q.basis} applies: the
$\beta_n$ with $n\not\equiv0\pmod\ell$ form a basis of $E_*\oo H$.

\begin{prop}
For $E=P\wedge KU$, there is a homotopy equivalence of spectra
\[
\bigoplus_{n\not\equiv 0 \;\mathrm{mod}\;\ell} E\wedge S^{2n}
\map{\simeq} E\wedge KU.
\]
\end{prop}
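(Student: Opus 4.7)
My plan is to deduce the proposition directly from Lemma \ref{E.KU} and Theorem \ref{H/q.basis}, essentially packaging the discussion that immediately precedes the statement. The map on the left-hand side already has all its summands specified, so the task is (i) to build a natural candidate map, and (ii) to show it is a $\pi_*$-isomorphism; the rest is formal.

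First I would construct the candidate equivalence $\phi$. For each integer $n \ge 1$ with $n \not\equiv 0 \pmod{\ell}$, the class $\beta_n \in \pi_{2n}(E \wedge \Pinfty)$ maps, via $1 \wedge j : E \wedge \Pinfty \to E \wedge KU$, to a class $\tilde\beta_n \in \pi_{2n}(E \wedge KU)$. Viewing $\tilde\beta_n$ as a map $S^{2n} \to E \wedge KU$, I would smash with $E$ and compose with the multiplication $E \wedge E \to E$ (using that $E = P \wedge KU$ is a ring spectrum since $P$ is, for our choice of $q$) to obtain $E \wedge S^{2n} \to E \wedge KU$. Taking the wedge sum over all admissible $n$ yields $\phi : \bigoplus_{n \not\equiv 0 \,(\ell)} E \wedge S^{2n} \to E \wedge KU$.

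Next I would compute the effect of $\phi$ on homotopy. By Lemma \ref{E.KU}, $\pi_*(E \wedge KU) \cong R \oo H$ where $R = \Z/q[u,u^{-1}]$, and under this identification $\tilde\beta_n$ corresponds to $\beta_n = u^n \alpha_n$, as in Variant \ref{variant:v=ut}. By construction $\phi$ sends the canonical $R$-module generator of $\pi_{*+2n}(E \wedge S^{2n}) \cong R[2n]$ to $\beta_n \in R \oo H$. Since $R$ is a $\Z/\ell^\nu$-algebra, Theorem \ref{H/q.basis} says that the $\alpha_n$ with $n > 0$ and $n \not\equiv 0 \pmod{\ell}$ form an $R$-basis of $R \oo H$; multiplying by the unit $u^n$ carries this basis to $\{\beta_n\}$. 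Hence $\phi$ is an $R$-module isomorphism on $\pi_*$.

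Finally, since $\pi_*$ commutes with wedge sums (coproducts) of spectra, the map $\phi$ is an isomorphism on $\pi_*$ and is therefore a weak equivalence by the Whitehead theorem in the stable homotopy category. There is no real obstacle here beyond the bookkeeping check that the indices $n$ excluded by Theorem \ref{H/q.basis} (those divisible by $\ell$) match exactly the indices omitted from the wedge in the statement, which is immediate.
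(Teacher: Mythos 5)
Your proof is correct, but takes a different route than the paper. The paper argues directly at the level of the filtered colimit: it invokes the algebraic argument of Corollary~\ref{R=Z/l} / Theorem~\ref{H/q.basis} one categorical level up, observing that under the splitting $E\wedge\Pinfty\simeq\bigoplus_n E\wedge S^{2n}$ the component maps of $b_E$ in the sequence \eqref{seq.spectra.1} are multiplication by $n$ (null when $\ell\mid n$) or by $n+1$ (null when $\ell\mid n+1$), so that the colimit decomposes as the stated wedge. You instead build the candidate map $\phi$ explicitly from the $\tilde\beta_n$, compute $\pi_*$ of both sides via Lemma~\ref{E.KU} and Theorem~\ref{H/q.basis}, and conclude by the stable Whitehead theorem. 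Your version is arguably the more routine and robust one: it feeds Theorem~\ref{H/q.basis} in as a black box and so handles $q=\ell^\nu$ with $\nu>1$ uniformly, whereas the paper's quick ``multiplication by $n$ is null when $\ell\mid n$'' remark is literally correct only for $\nu=1$ and otherwise relies on the reader redoing the filtration argument of Theorem~\ref{H/q.basis}. The paper's version is a bit more structural, exhibiting the summands as splitting off the colimit diagram itself rather than only after passing to homotopy groups. Both are sound; your bookkeeping of the basis elements (the $\alpha_n$, $n>0$, $n\not\equiv 0\bmod\ell$, rescaled by the unit $u^n$ to $\beta_n$) is accurate.
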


\begin{proof}
The argument we gave  in Corollary \ref{R=Z/l}  shows that the 
left side is the colimit of \eqref{seq.spectra.1}. i.e., $E\wedge KU$.
In effect, we just notice that the components 
$E\wedge S^{2n}\to E\wedge S^{2n-2}$ of $b$ are multiplication by $n$
and are thus null-homotopic when $\ell|n$, while the maps
$E\wedge S^{2n}\to E\wedge S^{2n}$ of $b$ are multiplication by $(n+1)$
and are thus null-homotopic when $\ell|n+1$.
\end{proof}

\newpage 
\section{The motivic $E\wedge N^\mathdot(\Pinfty)$}\label{sec:KGL}

We now turn to motivic spectra. 
The goal of this section is to describe the cosimplicial spectrum
$E\wedge N^\mathdot(\Pinfty)= E\wedge{\Pinfty}^{\wedge\mathdot+1}$ 
associated to an oriented motivic spectrum $E$. 
Here the motivic space $\Pinfty$ is defined as the 
union of the spaces represented by the schemes $\mathbb P^n$. 

The Segre maps $\Proj^m\times\Proj^n\to\Pinfty$ induce a map
$m:\Pinfty\times\Pinfty\to \Pinfty$ making 
$\Pinfty\wedge\spherespectrum$ into a ring $T$-spectrum. 
Then $n\mapsto E\wedge{\Pinfty}^{\wedge n+1}$ 
is the standard cosimplicial spectrum associated to a
spectrum $E$ and the triple $-\wedge\Pinfty$ \cite[8.6.4]{WH}. 
If $E$ is a ring $T$-spectrum, then this is a
cosimplicial ring spectrum.

A commutative ring $T$-spectrum $E$ with 
unit $\spherespectrum\map{\eta}E$ is said to be {\it oriented} 
if there is a map $\Pinfty\wedge\spherespectrum\map{t} T\wedge E$ 
whose restriction to $\Proj^1\wedge\spherespectrum$ is $T\wedge\eta$.
(Compare with Section \ref{sec:KU}.)
As pointed out in Example 2.6.4 of \cite{PPR}, 
$KGL$ is an oriented ring spectrum in $\stablehomotopy$ with a 
multiplicative formal group law \cite[1.3.1 and 1.3.3]{PPR}.

When $E$ is oriented, the projective bundle theorem in \cite[3.2]{D}
yields an isomorphism with inverse $\oplus c^n$:
\begin{equation}\label{proj.decomp}
\oplus\beta_n:
\bigoplus_{n=0}^{\infty}E\wedge T^{n} \map{\simeq} E\wedge\Pinfty.
\end{equation}
If we let $F_\gr$ denote the free graded abelian group on generators 
$\beta_n$ in degree $n\ge0$, then $E\wedge\Pinfty$ is the free graded
$E$-module spectrum $E\oo F_\gr$.
By \eqref{proj.decomp}, the terms $E\wedge{\Pinfty}^{\wedge n+1}$ 
are isomorphic to the direct sum of terms
$E\wedge T^{i_0}\wedge\cdots\wedge T^{i_n}$, $i_j\ge0$.

\begin{lem}\label{coface.P}
The coface map $\partial^j:E\wedge{\Pinfty}^{\wedge n}\to
E\wedge{\Pinfty}^{\wedge n+1}$ sends the summand
$E\wedge T^{i_1}\wedge\cdots\wedge T^{i_n}$ to $E\wedge 
T^{i_1}\wedge\cdots T^{i_j}\wedge T^0\wedge T^{i_{j+1}}\cdots\wedge T^{i_n}.$
\end{lem}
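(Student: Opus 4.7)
The plan is to unwind the definitions and use the naturality of the projective bundle decomposition \eqref{proj.decomp}. By the triple-construction recalled just before Conjecture \ref{conjecture8}, the cosimplicial spectrum $E \wedge N^{\mathdot}(\Pinfty) = E \wedge {\Pinfty}^{\wedge \mathdot+1}$ is associated with the triple $X \mapsto X \wedge \Pinfty$, and its coface $\partial^j$ (for $0 \le j \le n$) is obtained by smashing the identity on the other factors with the unit $\eta \colon \spherespectrum \to \Pinfty \wedge \spherespectrum$ inserted in the $(j+1)$-st $\Pinfty$-slot.

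The first step is to identify $\eta$ under \eqref{proj.decomp}, taken with $E = \spherespectrum$. By the standard normalization of the projective bundle theorem (namely that $\beta_0$ is dual to $1 = c_1^0$ in $E^*(\Pinfty)$, and that the orientation restricts on $\Proj^1$ to $T \wedge \eta$), the summand $T^0 = \spherespectrum$ of $\bigoplus_{k \ge 0} T^k$ is included via $\beta_0$, and $\beta_0$ is precisely the basepoint inclusion, i.e.\ the unit of $\Pinfty$ as a ring spectrum. Consequently, under \eqref{proj.decomp} the map $\eta$ corresponds to the inclusion of the $k = 0$ summand.

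The second step is to smash the decomposition \eqref{proj.decomp}, applied in the $(j+1)$-st slot, with the identity on the remaining $n$ factors of $\Pinfty$. Since smashing distributes over wedge sums, both source and target of $\partial^j$ decompose compatibly, and under these decompositions $\partial^j$ restricts on each summand $E \wedge T^{i_1} \wedge \cdots \wedge T^{i_n}$ of the source to the identity map onto the $k=0$ summand $E \wedge T^{i_1} \wedge \cdots \wedge T^{i_j} \wedge T^0 \wedge T^{i_{j+1}} \wedge \cdots \wedge T^{i_n}$ of the target. This is exactly the formula in the statement; there is no real obstacle, and the only point worth flagging is the identification $\beta_0 = \eta$, which is what pins down the inclusion of the $T^0$ summand as the unit map.
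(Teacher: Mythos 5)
Your proof is correct and takes essentially the same approach as the paper: cofaces insert the unit $\spherespectrum\to\Pinfty\wedge\spherespectrum$, and under the projective bundle decomposition the unit smashed with $E$ is $\beta_0$, the inclusion of the $T^0$ summand. One small imprecision: \eqref{proj.decomp} requires $E$ to be oriented, so you cannot literally take $E=\spherespectrum$; instead apply the decomposition after smashing with the oriented $E$ (the map $E = E\wedge T^0 \to E\wedge\Pinfty \cong \oplus_n E\wedge T^n$ induced by the unit is the inclusion of the $n=0$ summand), which is how the paper phrases it.
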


\begin{proof}
Since the coface maps are determined by the unit map 
$\spherespectrum\to\Pinfty$, it suffices to observe that the map
$$E=E\wedge T^0\to E\wedge\Pinfty\cong\oplus E\wedge T^n$$
is just the inclusion of $T^0$ into $\oplus T^n$, 
by the projective bundle theorem.
\end{proof}

At this point, we may interpret the coface maps in 
$E\wedge N^\mathdot(\Pinfty)$ in terms of $F_\gr$. 
Recall (from \cite[8.1.9]{WH})
that a {\it semi-cosimplicial} object in a category is a sequence
of objects $K^n$ with coface operators $\partial^i:K^{n-1}\to K^n$
($0\le i \le n$) satisfying the cosimplicial identities
$\partial^j\partial^i=\partial^i\partial^{j-1}$ if $i<j$.

For example, $n\mapsto F_\gr^{\oo n+1}$ is a semi-cosimplicial
graded abelian group, where the codegeneracies insert $1=\beta_0$.
(The generators $\beta_n$ of $F_\gr$ lie in degree $n$.)
We may also form the semi-cosimplicial spectrum $E\oo F_\gr^{\oo n+1}$.
Note that the codegeneracy maps $F_\gr^{\oo n+1}\to F_\gr^{\oo n}$
associated to the product $F\oo F\to F$
are not graded, as \eqref{alpha*alpha} shows, so we do not get a
cosimplicial spectrum in this way.

\begin{cor}
Let $E$ be an oriented spectrum $E$. Then there is an isomorphism
of semi-cosimplicial ring spectra,
$E\oo F_\gr^{\oo\mathdot+1} \map{\cong}E\wedge{\Pinfty}^{\wedge\mathdot+1}$
\end{cor}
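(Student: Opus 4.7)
The plan is to establish the isomorphism one cosimplicial degree at a time, then check separately that it intertwines the coface operators and respects the ring structures.

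For the degreewise isomorphism, I would iterate the projective bundle decomposition \eqref{proj.decomp}. Starting from the oriented $E$, one has $E\wedge\Pinfty\cong E\oo F_\gr$. Since smashing on the left with an oriented module spectrum preserves orientability, one may apply \eqref{proj.decomp} again after replacing $E$ by $E\wedge\Pinfty$, and continue by induction, producing canonical isomorphisms
$$ \varphi_n \colon E\oo F_\gr^{\oo n+1} \;\map{\cong}\; E\wedge\Pinfty^{\wedge n+1} $$
that match the basis element $\beta_{i_0}\oo\cdots\oo\beta_{i_n}$ with the summand $E\wedge T^{i_0}\wedge\cdots\wedge T^{i_n}$ of the $(n+1)$-fold iterated projective bundle decomposition.

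Next, to verify the semi-cosimplicial compatibility, I would appeal directly to Lemma \ref{coface.P}: the motivic coface $\partial^j$ inserts a copy of $T^0$ in the $j$th smash factor, which under $\varphi_\mathdot$ corresponds precisely to inserting the algebraic unit $\beta_0$ in the $j$th tensor factor of $F_\gr^{\oo n+1}$. This is the standard semi-cosimplicial coface on the cobar-type object $F_\gr^{\oo\mathdot+1}$, so the $\varphi_n$ assemble into a map of semi-cosimplicial spectra.

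For the ring structure, the multiplication on $E\wedge\Pinfty^{\wedge n+1}$ in each degree is induced factor-by-factor from the multiplication of $E$ and the Segre product $m\colon\Pinfty\wedge\Pinfty\to\Pinfty$. Transferring via $\varphi_n$, this has to agree with the tensor product of the $E$-multiplication with the combinatorial multiplication on $F_\gr^{\oo n+1}$ inherited from the ring structure of $F$. The key computation is in cosimplicial degree one, where the coefficient of the basis element $\beta_k$ in the image of $\beta_i\wedge\beta_j$ under $m_*$ is read off from the multiplicative formal group law $x+y+uxy$ carried by $E$; Lemma \ref{aijk} identifies this coefficient as $\binom{k}{k-i,k-j,i+j-k}$, which matches the product $\alpha_i\ast\alpha_j$ in $F$ given by \eqref{alpha*alpha}. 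Higher $n$ follow by iteration, since smash and tensor distribute over one another factorwise.

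The main obstacle will be this last step: carrying out the motivic analogue of Adams' computation of the multiplication on $\pi_{*,*}(E\wedge\Pinfty)$ via the formal group law, now with $T$-suspensions playing the role of ordinary suspensions. Once this is in place the assertion reduces to bookkeeping, and the restriction to the semi-cosimplicial (rather than full cosimplicial) structure is explained by the fact that the product $F_\gr\oo F_\gr\to F_\gr$ fails to be homogeneous in the $\beta_n$-grading, so codegeneracies, which factor through this product, do not yield a graded map and are therefore excluded from the present statement.
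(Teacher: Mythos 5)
Your degreewise iteration of the projective bundle decomposition and the use of Lemma \ref{coface.P} to identify the coface maps with ``insert $\beta_0$'' is exactly what the paper intends; the corollary is stated without proof precisely because it is a direct assembly of \eqref{proj.decomp} and Lemma \ref{coface.P}.

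The trouble is in your treatment of the ring structures. The corollary hypothesizes only that $E$ is oriented; it does not assume a multiplicative formal group law $x+y+uxy$ with $u$ invertible, and you cannot get that for free. Your argument invokes Lemma \ref{aijk}, which for the group law $x+y+uxy$ gives the coefficient of $\beta_k$ in $\beta_i\ast\beta_j$ as $\binom{k}{k-i,k-j,i+j-k}u^{i+j-k}$, \emph{not} $\binom{k}{k-i,k-j,i+j-k}$; you have silently dropped the $u^{i+j-k}$. Getting rid of it requires the renormalization $\beta_n=u^n\alpha_n$ of Variant \ref{variant:v=ut} (equivalently the change of orientation $t\mapsto ut$), which trades the graded object $F_\gr$ for the \emph{ungraded} $F$ and relies on $u$ being a unit, i.e.\ on periodicity. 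That is precisely the content of Proposition \ref{toy.global}, with the extra hypotheses that appear there. So the multiplicative-FGL comparison you run is not a proof of the corollary as stated but rather of the later, stronger proposition. For the corollary itself, the ring spectrum compatibility is a consequence of \eqref{proj.decomp} being an isomorphism of $E$-module spectra together with the multiplication on both sides being induced factorwise from the Segre map on $\Pinfty$ and the product on $E$; no identification with the combinatorial product \eqref{alpha*alpha} on the numerical polynomials $F$ is needed (nor would it hold for a general orientation). Your final remark explaining why only the semi-cosimplicial structure survives (non-homogeneity of $F\oo F\to F$) is correct and matches the paper's discussion.
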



\begin{lem}\label{lem.diag}
For any oriented $E$,
the component maps $E\wedge T^n\to E\wedge T^{i}\wedge T^j$ of the
diagonal $E\wedge\Pinfty\to E\wedge\Pinfty\wedge\Pinfty$ are 
the canonical associativity isomorphisms when $i+j=n$, 
and zero otherwise.
\end{lem}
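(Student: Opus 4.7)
The plan is to decompose both sides of the diagonal via the projective bundle theorem and identify the resulting components by passing to $E$-cohomology. Iterating \eqref{proj.decomp}, using invertibility of $T$ and distributivity of smash over direct sum, we get
\[
E\wedge\Pinfty = \bigoplus_{n\ge 0}E\wedge T^{n}, \qquad E\wedge\Pinfty\wedge\Pinfty = \bigoplus_{i,j\ge 0}E\wedge T^{i}\wedge T^{j},
\]
with inclusions $\beta_n,\;\beta_i\wedge\beta_j$ and projections $c^n,\;c^i\wedge c^j$ satisfying the orthogonality $c^m\circ\beta_n = \delta_{m,n}\cdot\mathrm{id}_{E\wedge T^n}$. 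The $(i,j)$-component of the diagonal restricted to the $n$-th summand is therefore the composite $(c^i\wedge c^j)\circ(E\wedge\Delta)\circ\beta_n$, and identifying this map will prove the lemma.

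Under the $E$-module adjunction $[E\wedge X,E\wedge T^k]_E\cong E^{2k,k}(X)$, the projection $c^k$ corresponds to the $k$-th power $t^k\in E^{2k,k}(\Pinfty)$ of the orientation class, and $c^i\wedge c^j$ corresponds to the external product $t_1^it_2^j\in E^{*,*}(\Pinfty\wedge\Pinfty)$. Since the pointed diagonal $\Delta$ factors through the space-level diagonal $\Pinfty\to\Pinfty\times\Pinfty$ and both projections $\pi_k$ satisfy $\pi_k\circ\Delta=\mathrm{id}$, naturality yields $\Delta^*(t_k)=t$ for $k=1,2$. By multiplicativity of pullback,
\[
\Delta^*(t_1^it_2^j)=t^{i+j},
\]
which translates back under the adjunction to the equality $(c^i\wedge c^j)\circ(E\wedge\Delta)=c^{i+j}$, modulo the canonical associativity isomorphism $T^i\wedge T^j\cong T^{i+j}$.

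Finally, composing with $\beta_n$ and applying the orthogonality relation gives $c^{i+j}\circ\beta_n$, which is the canonical iso $E\wedge T^n\cong E\wedge T^i\wedge T^j$ when $i+j=n$ and is zero otherwise, as asserted. The main technical point is the careful identification of the structure maps $c^k$ with the powers $t^k$ of the orientation class under the $E$-module adjunction, and the verification of $\Delta^*(t_1^it_2^j)=t^{i+j}$ in the pointed motivic setting; in particular, for the degenerate cases $i=0$ or $j=0$, the argument reduces to the compatibility of the basepoint summand with the unit $\spherespectrum\to\Pinfty$ giving $\beta_0$.
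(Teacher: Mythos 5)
Your proof is correct and takes essentially the same approach as the paper: both reduce the component map to the statement that the (iterated) diagonal followed by the smash of the projections $c$ equals $c^{i+j}$, which is in turn the fact that $c^k$ corresponds to $t^k$ and that the pullback of external powers of $t$ along the diagonal is the cup power $t^{i+j}$. The paper packages this into a single commutative diagram using the $(i+j)$-fold diagonal and then invokes the orthogonality $c^{i+j}\circ\beta_n=\delta_{n,i+j}$, while you spell out the cohomological dictionary and the computation $\Delta^*(t_1^it_2^j)=t^{i+j}$ more explicitly; the underlying content is identical.
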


\begin{proof}
The maps $\beta_n:E\wedge T^n\to E\wedge\Pinfty$ in \eqref{proj.decomp}
satisfy $c^k\circ\beta_n =\delta_{kn}$, where $c^n$
is the projection $E\wedge\Pinfty\to E\wedge T^n$. 
We have a commutative diagram:
\[\xymatrix{
E\wedge T^n \ar[r]^-{\beta_n}&
E\wedge\Pinfty \ar[r]^-{\Delta}\ar[d]^{\Delta}& 
E\wedge(\Pinfty)^{\wedge i+j} \ar[r]^-{c,c,...,c}\ar[d]_{\cong}& 
E\wedge T^{\wedge i+j} \ar[d]^-{\cong}\\ &
E\wedge\Pinfty\wedge\Pinfty \ar[r]^-{\Delta}&
E\wedge(\Pinfty)^{\wedge i}\wedge(\Pinfty)^{\wedge j} \ar[r]^-{c,...,c}&
E\wedge T^{\wedge i}\wedge T^{\wedge j}.}
\]
The top horizontal composite is $c^{i+j}\circ\beta_n=\delta_{n,i+j}$,
and the entire composition is the component map in question.
The result follows.
\end{proof}
%


We now turn to the codegeneracies 
$\sigma^j: E\wedge{\Pinfty}^{\wedge n+1}\to E\wedge{\Pinfty}^{\wedge n}$.
These are all induced from the product $\Pinfty\wedge\Pinfty\to\Pinfty$.
After taking the smash product with $E$, and using \eqref{proj.decomp},
we may rewrite the product as an $E$-module map
\[
\bigoplus_{i,j\ge0}E\wedge T^i\wedge T^j = 
(E\wedge\Pinfty)\wedge(E\wedge\Pinfty) \to E\wedge\Pinfty = 
\bigoplus_{k\ge0} E\wedge T^k.
\]
Since each term $E\wedge T^i\wedge T^j$ is a compact $E$-module,
its image lies in a finite sum. Thus the product is given by a matrix
whose entries $a_{ij}^k$ are the component maps
$E\wedge T^i\wedge T^j\to E\wedge T^k$\!.


In terms of motivic homotopy groups,
$E_{*,*}(\Pinfty)\cong\bigoplus_{n=0}^{\infty}E_{*,*}(T^{n})$
is a free graded $E_{*,*}$-module with generators 
$\beta_n\in E_{2n,n}(\Pinfty)$, $n\ge0$, corresponding to the unit of
$E_{2n,n}(T^n)\cong E_{0,0}$. Dually, $E^{*,*}(\Pinfty)\cong E^{*,*}[[x]]$
and (as in topology) the orientation yields a 
formal group law of the form $\mu=x+y+\sum_{i,j\ge1}a_{i,j}x^{i}y^{j}$
in $E_*[[x,y]]\cong E^*(\Pinfty \times \Pinfty)$, where
\begin{equation}\label{FGL.coef}
	a_{i,j}\in E_{2i+2j-2,i+j-1} = \Hom_{\SH}(T^{i+j},E\wedge T).
\end{equation}

We can now extend a result of Adams to the motivic setting.

\begin{thm}\label{Adams.a1jk}
If $E$ is an oriented motivic ring spectrum, then
$a_{1j}^k = k\; a_{1,1+j-k}$, with $a_{1,1+j-k}$ as in \eqref{FGL.coef}.
\end{thm}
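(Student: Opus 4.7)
The strategy is to pass to the cohomological side and reduce the statement to a combinatorial coefficient extraction from the formal group law $\mu$. By the projective bundle theorem \eqref{proj.decomp}, $E_{*,*}(\Pinfty)$ is $E_{*,*}$-free on the $\beta_n$, which are dual under the standard pairing to the power series basis $\{x^n\}$ of $E^{*,*}(\Pinfty)\cong E^{*,*}[[x]]$. The $E$-module map $m\colon E\wedge\Pinfty\wedge\Pinfty\to E\wedge\Pinfty$ induces $m^*(x)=\mu(x_1,x_2)$, hence $m^*(x^k)=\mu(x_1,x_2)^k$. Duality identifies the coefficient $a_{ij}^k$ of $\beta_k$ in $m_*(\beta_i\oo\beta_j)$ with the coefficient of $x_1^i x_2^j$ in $\mu(x_1,x_2)^k$, exactly as in the computation invoked in the proof of Corollary \ref{E.Pdot}.

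Given this dictionary, expand
\[
\mu(x_1,x_2)^k \;=\; \Bigl(x_1 + x_2 + \sum_{p,q\ge 1} a_{p,q}\,x_1^p x_2^q\Bigr)^k
\]
and read off the coefficient of $x_1 x_2^j$. Since the total $x_1$-degree must equal $1$, exactly one of the $k$ factors contributes a summand containing $x_1$; this summand is either $x_1$ or $a_{1,q}\,x_1 x_2^q$ for some $q\ge 1$. The remaining $k-1$ factors must then each contribute $x_2$, because every other summand of $\mu$ carries a positive power of $x_1$. Those $k-1$ factors collectively supply $x_2^{k-1}$, forcing the distinguished factor to contribute $x_1 x_2^{j-k+1}$. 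Multiplying by the $k$ choices of distinguished factor gives
\[
a_{1j}^k \;=\; k\cdot a_{1,\,1+j-k},
\]
under the natural conventions that $a_{1,0}=1$ (the coefficient of $x_1$ in $\mu$) and $a_{1,m}=0$ for $m<0$.

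The only subtle step is justifying the identification of $a_{ij}^k$ with the coefficient of $x_1^i x_2^j$ in $m^*(x^k)$ in the motivic setting; this rests on the splitting \eqref{proj.decomp} together with its Künneth-type consequence $E^{*,*}(\Pinfty\wedge\Pinfty)\cong E^{*,*}[[x_1,x_2]]$, both standard for oriented motivic ring spectra. Once that is granted, the combinatorial core is a direct expansion identical in spirit to Adams's topological calculation in \cite[II(3.4--5)]{A}. As a sanity check, $a_{1j}^k$ has bidegree $(2(1{+}j{-}k),\,1{+}j{-}k)$, matching the bidegree of $a_{1,1+j-k}$ given by \eqref{FGL.coef}.
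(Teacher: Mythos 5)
Your proposal is correct and follows essentially the same route as the paper: dualize to cohomology, identify $a_{ij}^k$ with the coefficient of $x_1^i x_2^j$ in $\mu(x_1,x_2)^k$ via the projective bundle theorem and the K\"unneth splitting, and then extract the coefficient by a term-by-term expansion. The only difference is one of presentation: you carry out the $x_1$-degree-one counting argument explicitly, whereas the paper cites this exact combinatorics to Adams \cite[II(3.6)]{A}; and the step you flag as ``subtle'' --- justifying $E^{*,*}(\Pinfty\wedge\Pinfty)\cong E^{*,*}[[x_1,x_2]]$ and the duality of $\{\beta_n\}$ with $\{x^n\}$ in the motivic setting --- is precisely what the paper isolates and proves as Lemma~\ref{lem.diag} (the computation of the component maps of the diagonal $E\wedge\Pinfty\to E\wedge\Pinfty\wedge\Pinfty$). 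It would tighten your write-up to replace the appeal to ``standard'' facts with an explicit invocation of that lemma, since that is exactly the ingredient that makes Adams's topological calculation transfer.
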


\begin{proof}
This follows directly from Lemma \ref{lem.diag}, 
together with Adams' calculation in \cite[II(3.6), p.\,46]{A}.
\end{proof}

\begin{subrem}
For fixed $i,j$ the maps $a_{ij}^k$ are zero unless $k\le i+j$.
Therefore the image of $E\wedge T^i\wedge T^j$ under the product
lands in the finitely many terms $E\wedge T^k$ with $k\le i+j$.
\end{subrem}


Our next task is to replace $E\oo F_\gr^{\oo\mathdot+1}$ by 
$E\oo F^{\oo\mathdot+1}$ to get a cosimplicial spectrum.
This is possible when $E$ is periodic in the following sense.
If $E_*$ has a unit in $E_{2,1}$, represented by a map
$T\map{u} E$, then $u\wedge E$ induces an isomorphism
$T\wedge E\to E$ with inverse $u^{-1}\wedge E$. We call it a 
{\it periodicity map} for $E$, and say that $E$ is {\it periodic}.
If $E$ is oriented and periodic, we can  use the periodicity map 
for $E$ to define maps $\alpha_n:E\to E\wedge\Pinfty$
such that $\beta_n=u^n\alpha_n$, resulting in a rewriting of the
projective bundle formula  \eqref{proj.decomp} as
\begin{equation}\label{proj.periodic}
\oplus\alpha_n: \bigoplus\nolimits_{n=0}^\infty\; E 
\map{\cong} E\wedge\Pinfty.
\end{equation}
Using this new basis, each $E\wedge{\Pinfty}^{\wedge n}$ is isomorphic to
$E\oo F^{\oo n}$.

Now suppose that $E$ is an oriented spectrum with a multiplicative 
formal group law $\mu=x+y+uxy$, with $u$ a unit in $E_{2,1}$.
In this case, it is convenient to change our orientation to eliminate $u$,
as suggested in \cite[II(2.1)]{A}. This produces the new formal group law
$x+y+xy$. To see this, let $t'$ denote the element $ut$ of $E^0(\Pinfty)$;
then the formal group law implies that
$$\mu(t')=(ux)+(uy)+(ux)(uy)=x'+y'+x'y'.$$


\begin{prop}\label{toy.global}
Let $E$ be an oriented ring spectrum $E$ with multiplicative group law
$x+y+uxy$, $u$ a unit. Then the cosimplicial ring spectrum
$E\wedge{\Pinfty}^{\wedge\mathdot+1}$ is $E\oo C^\mathdot_{F\oo F}(F,F)$.

In particular, $KGL\wedge{\Pinfty}^{\wedge\mathdot+1}$ is isomorphic
to $KGL\oo C^\mathdot_{F\oo F}(F,F)$.
\end{prop}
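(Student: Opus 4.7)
The plan is to upgrade the semi-cosimplicial isomorphism $E\oo F_\gr^{\oo\mathdot+1}\cong E\wedge\Pinfty^{\wedge\mathdot+1}$ (established in the corollary just before Lemma \ref{lem.diag}) to a full cosimplicial isomorphism, replacing the graded ring $F_\gr$ by the numerical polynomial ring $F$. The two key ingredients are (i) the periodicity unit $u$, which lets us re-index via $\alpha_n = u^{-n}\beta_n$ and thereby pass from the graded projective bundle decomposition \eqref{proj.decomp} to the ungraded form \eqref{proj.periodic}, yielding level-wise isomorphisms $E\wedge\Pinfty^{\wedge n+1} \cong E\oo F^{\oo n+1}$; and (ii) the matching of codegeneracy structure constants with the ring multiplication of $F$, using the formula \eqref{alpha*alpha} on one side and the formal group law on the other. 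The specialization to $KGL$ is then immediate, since $KGL$ is oriented with multiplicative group law $x+y+uxy$ and the Bott element is a unit.

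Compatibility with the coface maps is straightforward. By Lemma \ref{coface.P}, each $\partial^j$ sends the summand $E\wedge T^{i_1}\wedge\cdots\wedge T^{i_n}$ into the corresponding summand of $E\wedge\Pinfty^{\wedge n+1}$ by inserting a $T^0$-factor in the $j$-th slot. Under the identification with $E\oo F^{\oo n+1}$, this is precisely insertion of the unit $\alpha_0 = 1$ in the $j$-th tensor slot, which is the description of $\partial^j$ in the cobar complex $C^\mathdot_{F\oo F}(F,F)$ from Example \ref{triv.algebroid}.

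The substantive step, and the main obstacle, is matching the codegeneracies. Since every $\sigma^j$ is induced from the single product $m:\Pinfty\wedge\Pinfty\to\Pinfty$ applied at the $(j,j+1)$-slot, it suffices to analyze $\sigma^0$. Under the projective bundle decomposition, $\sigma^0$ is encoded by structure constants $a_{ij}^k$ extracting the $E\wedge T^k$-component of the product $E\wedge T^i\wedge T^j\to E\wedge T^k$. The motivic analogue of Adams's calculation, combining Lemma \ref{lem.diag} with Theorem \ref{Adams.a1jk} and associativity of $m$, identifies $a_{ij}^k$ with the coefficient of $x^iy^j$ in the formal power $(x+y+uxy)^k$; by Lemma \ref{aijk} this equals $\binom{k}{k-i,k-j,i+j-k}u^{i+j-k}$. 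The change of basis $\alpha_n = u^{-n}\beta_n$ absorbs the powers of $u$ (equivalently, this is the substitution converting $x+y+uxy$ into $x+y+xy$ noted just before the Proposition), leaving exactly the multinomial coefficients $\binom{k}{k-i,k-j,i+j-k}$ that appear in the product formula \eqref{alpha*alpha} for $F$. Hence $\sigma^0$ corresponds to the ring multiplication $F\oo F\to F$, and the identification with the cobar complex $E\oo C^\mathdot_{F\oo F}(F,F)$ is complete.
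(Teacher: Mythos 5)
Your proof is correct and follows the same route as the paper: use the periodicity unit $u$ to re-index $\alpha_n = u^{-n}\beta_n$ and pass from the graded decomposition \eqref{proj.decomp} to the ungraded \eqref{proj.periodic}, match the cofaces by Lemma \ref{coface.P}, and match the codegeneracy $\sigma^0$ by identifying the structure constants $a_{ij}^k$ with the multinomial coefficients in \eqref{alpha*alpha} via Lemma \ref{aijk}. One caveat on the codegeneracy step: the paper establishes the general formula for $a_{ij}^k$ by appealing (through Corollary \ref{E.Pdot}) to Adams' Kronecker-duality argument in [A, II(3.4--5)], which dualizes $\sigma^0$ against $m^*(x) = \mu(x,y)$ in $E$-cohomology and transports to motives by the projective bundle theorem. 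Your proposed chain --- Lemma \ref{lem.diag} plus Theorem \ref{Adams.a1jk} plus associativity of $m$ --- is not quite sufficient as written: Theorem \ref{Adams.a1jk} only gives $a_{1j}^k$, and bootstrapping to general $i$ via the associativity relation $\sum_l a_{1j}^l a_{lk}^m = \sum_l a_{jk}^l a_{1l}^m$ produces a recursion $(j+1)\,a_{j+1,k}^m = m\,a_{jk}^{m-1} + (m-j)u\,a_{jk}^m$ that requires verifying divisibility by $j+1$ over the integral base, a point the duality argument sidesteps. The conclusion is correct either way, but the duality route is the one the paper (implicitly, via Adams) uses.
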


\begin{proof}
After the change in orientation indicated above, the projection bundle 
formula yields isomorphisms
$E\oo F^{\oo n} \map{\cong} E\wedge{\Pinfty}^{\wedge n}$.
The respective coface maps are insertion of $1$ and $T^0$ 
(by Lemma \ref{coface.P}), so they agree. 
The codegeneracies of the left side are given by the product
$F\oo F\to F$, whose coefficients are given by \eqref{alpha*alpha}.
These agree with the matrix entries $a_{ij}^k$ on the right
by Lemma \ref{aijk} and Corollary \ref{E.Pdot}, 
as the proof of \ref{E.Pdot} shows.
\end{proof}

\section{The slice filtration}\label{sec:slice}

Recall \cite[2.2]{Vslice} that $s_q(E)$ denotes the $q^{th}$ slice of 
a motivic spectrum $E$, and that $s_q(E\wedge T^i)=s_{q-i}(E)\wedge T^i$. 
 The following result is well known to experts.

\begin{prop}\label{slice.sum}
The slice functors $s_q$ commute with direct sums.
\end{prop}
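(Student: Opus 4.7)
The plan is to reduce the statement to a corresponding property of the $q$-effective colocalization $f_q$. Recall that $s_qE$ is defined by the cofibration sequence $f_{q+1}E\to f_qE\to s_qE$. Since cofibers are preserved by arbitrary direct sums in a triangulated category, it suffices to prove that the functor $f_q$ commutes with direct sums. In particular, once $f_q$ and $f_{q+1}$ commute with $\bigoplus$, so does their cofiber $s_q$.

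To show that $f_q$ commutes with direct sums, I would verify that for any family $\{E_\alpha\}$ in $\SH(S)$, the canonical comparison map
\[
\bigoplus_\alpha f_qE_\alpha \;\lra\; f_q\!\left(\bigoplus_\alpha E_\alpha\right)
\]
is an isomorphism. By the universal property characterizing $f_q$, this amounts to checking two conditions for the natural map $\bigoplus_\alpha f_qE_\alpha \to \bigoplus_\alpha E_\alpha$: first, that its source lies in the $q$-effective subcategory $\Sigma_T^q\eff$; and second, that its cofiber lies in the right-orthogonal subcategory $\stablehomotopy^\perp(q)$.

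The first condition is easy: the subcategory $\Sigma_T^q\eff$ is a localizing subcategory of $\SH(S)$, hence closed under arbitrary direct sums, so $\bigoplus_\alpha f_qE_\alpha\in\Sigma_T^q\eff$. For the second, since $f_q$ is defined by the cofiber sequence $f_qE_\alpha\to E_\alpha\to L_{<q}E_\alpha$ with $L_{<q}E_\alpha\in\SH^\perp(q)$, the cofiber in question identifies (using the octahedral axiom) with $\bigoplus_\alpha L_{<q}E_\alpha$. Thus the question reduces to showing that $\SH^\perp(q)$ is closed under direct sums.

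The heart of the argument — and the one step that really uses something nontrivial — is that $\Sigma_T^q\eff$ is generated, as a localizing subcategory, by a set of objects that are compact in $\SH(S)$, namely the spectra $\Sigma_T^q\Tsusp X_+[n]$ with $X\in\smoothX$. Granting this, $Y\in\SH^\perp(q)$ if and only if $\Hom_{\SH(S)}(G,Y)=0$ for each such generator $G$; since $G$ is compact, $\Hom(G,-)$ preserves arbitrary direct sums, so a direct sum of objects in $\SH^\perp(q)$ is again in $\SH^\perp(q)$. This completes the verification and finishes the proof. The main obstacle, therefore, is this compact-generation statement; it is standard in motivic stable homotopy theory, and I would simply cite it (e.g.\ from the foundational papers on the slice filtration or from \cite{Pelaez}).
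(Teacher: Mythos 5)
Your proposal is correct and follows essentially the same route as the paper: reduce $s_q$ to $f_q$ via the defining cofiber sequence, then use compact generation of the ($q$-)effective subcategory to conclude. The paper packages the compactness step by citing a criterion (\cite[3.1.14]{Pelaez}) that detects $f_0$-isomorphisms on a family of compact objects $K$ via $[K,-]$, whereas you unpack the same content directly through the universal property of $f_q$ (source in $\Sigma_T^q\eff$, cofiber in $\SH^\perp(q)$) and the fact that $[G,-]$ preserves sums for the compact generators $G$ --- a difference of exposition rather than substance.
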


\begin{proof}
It suffices to prove that $f_{q}$ commutes with direct sums, 
since there is a distinguished triangle of the form
$f_{q+1}\rightarrow f_{q} \rightarrow s_{q}.$
For simplicity, we will restrict ourselves to the case 
$q=0$; the argument for arbitrary $q$ is exactly the same.

Consider a direct sum $E=\oplus_{\alpha}E_{\alpha}$ in $\mathcal{SH}$.  
Since $\eff$ is closed under direct sums, $\oplus f_0(E_\alpha)$
is effective. By \cite[3.1.14]{Pelaez}, there is a family of compact
objects $K$ such that, for every $X\to Y$, $f_0(X)\to f_0(Y)$ 
is an isomorphism if and only if each $[K,X]\to[K,Y]$ is an isomorphism.
In the case at hand, for every such $K$ we have
\[
[K,\oplus f_0(E_\alpha)]\cong\oplus[K,f_0(E_\alpha)]\cong
                         \oplus[K,E_\alpha]\cong[K,\oplus E_\alpha].
\]
Therefore $\oplus f_0(E_\alpha)\to f_0(\oplus E_\alpha)$ is an isomorphism.
\end{proof}

\begin{exam}\label{slice.Pinfty}
Suppose that $E$ is oriented.  
Applying $s_q$ to \eqref{proj.decomp} yields the formula
$\oplus s_{q-n}(E)\wedge T^n \map{\cong}s_q(E\wedge\Pinfty)$
with component maps $s_q(\beta_n)$.
More generally, the slice 
$s_q(E\wedge{\Pinfty}^{\wedge n})$ is isomorphic to 
\[
\bigoplus s_{r}(E)\wedge T^{i_1}\wedge\cdots\wedge T^{i_n},
\]
where $q=r+i_1+\cdots+i_n$,
and the coface maps in $s_q(E\wedge{\Pinfty}^{\wedge\mathdot})$ 
are given by Lemma \ref{coface.P}, with $E$ replaced by $s_q(E)$.
Thus, except for the codegeneracy maps, the cosimplicial spectrum
$\oplus_q s_q(E\wedge{\Pinfty}^{\wedge\mathdot+1})$ is isomorphic to
$\oplus s_q(E) \oo F_\gr^{\oo\mathdot+1}$.
\end{exam}

\smallskip
We will need the following observation: for any motivic ring spectrum
$E$, $s_*E=\oplus s_q(E)$ is a graded motivic ring spectrum;
see \cite[3.6.13]{Pelaez}).  
Since $\Z[u,u^{-1}]$ is a graded ring (with $u$ in degree 1), 
we can form the graded motivic ring spectrum 
$s_0(E)\oo\Z[u,u^{-1}] = \bigoplus T^q\wedge s_0(E)$.

\begin{lem}\label{Kslice.ring}
As ring spectra,
$s_0(KGL)\oo\Z[u,u^{-1}] \map{\cong}\oplus s_q(KGL)$.

More generally, $s_0 E\oo\Z[u,u^{-1}] \map{\cong}s_*E$ 
for any oriented ring spectrum $E$ with a unit $u$ in $E_{2,1}$.
\end{lem}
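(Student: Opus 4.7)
The plan is to use iterated multiplication by the unit $u \in E_{2,1}$ to identify each slice $s_q(E)$ with $T^q \wedge s_0(E)$, then assemble these identifications into an isomorphism of graded ring spectra. The first assertion of the lemma will follow as the special case $E = KGL$, since $KGL$ is oriented by \cite[2.6.4]{PPR} and the periodicity isomorphism $T \wedge KGL \cong KGL$ built into the definition of $KGL$ provides the required unit $u \in KGL_{2,1}$.

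First I would observe that the multiplication map $\mu_u\colon T \wedge E \to E$, obtained by smashing $u\colon T \to E$ with the identity of $E$ and composing with the product, is an isomorphism because $u$ is a unit. Iterating $\mu_u$ (or its inverse for $q<0$) yields isomorphisms $\mu_u^q\colon T^q \wedge E \to E$ for every integer $q$. Applying $s_q$ and using the identity $s_q(T^q \wedge X) = T^q \wedge s_0(X)$, we obtain isomorphisms
\[\phi_q\colon T^q \wedge s_0(E) = s_q(T^q \wedge E) \map{s_q(\mu_u^q)} s_q(E).\]
By Proposition \ref{slice.sum}, the direct sum of the $\phi_q$ is a well-defined map of motivic spectra
\[\Phi\colon s_0(E) \oo \Z[u, u^{-1}] = \bigoplus_q T^q \wedge s_0(E) \lra \bigoplus_q s_q(E) = s_*E,\]
which is an isomorphism because each summand is.

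Next, I would verify that $\Phi$ respects ring structures. The target $s_*E$ is a graded motivic ring spectrum by \cite[3.6.13]{Pelaez}, with product induced from that of $E$. The source is the tensor product of the ring spectrum $s_0(E)$ with the graded ring $\Z[u, u^{-1}]$, using the natural ring structure described in Notation \ref{Notation}. Compatibility of $\Phi$ with products on the bidegree $(q, q')$ summand reduces to the commutativity of the diagram expressing that $\mu_u^{q+q'}\colon T^{q+q'} \wedge E \to E$ agrees, after a swap, with the composition obtained by multiplying by $u^q$ and $u^{q'}$ separately and then using $E$'s product. This diagram commutes by associativity of the multiplication on $E$ together with the fact that $u$ is a (central) unit; applying $s_{q+q'}$ then yields the needed identity.

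The main obstacle is the ring compatibility in the last step. It is essentially formal once one unwinds the definitions — everything is controlled by the single periodicity isomorphism $\mu_u$ — but it requires carefully tracking the graded structure and the slicewise decomposition through both the periodicity map and the multiplication of $E$.
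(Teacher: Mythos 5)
Your proposal is correct and matches the paper's (very terse) proof: both use the periodicity isomorphism $T^q\wedge E\cong E$ induced by the unit $u$ to identify $T^q\wedge s_0(E)$ with $s_q(E)$, then assemble over $q$ and check compatibility with multiplication using the multiplicative slice structure of \cite[3.6.13]{Pelaez}. The only quibble is that your invocation of Proposition \ref{slice.sum} is unnecessary here — assembling the $\phi_q$ into a map of direct sums is automatic, since $s_*(E)$ is by definition $\bigoplus_q s_q(E)$ rather than $s_q(\bigoplus\cdots)$.
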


\begin{proof}
The periodicity isomorphisms $u^q:KGL\wedge T^q\to KGL$, and
more generally $E\wedge T^q\to E$, induce isomorphisms 
$s_0 E\wedge T^q\cong s_q E$ compatible with multiplication. 
These assemble to give the result. (Cf.\ \cite[6.2]{Sp}.)
\end{proof}

\begin{prop}\label{toy.slice}
Suppose that $E$ is oriented, and has a multiplicative group law $x+y+uxy$.
If $u$ is a unit, we have isomorphisms of cosimplicial ring spectra:
\[
s_*(E\wedge{\Pinfty}^{\wedge\mathdot+1}) \cong
s_*(E) \oo F^{\oo\mathdot+1} \cong
s_0(E) \oo\Z[u,u^{-1}]\oo C^\mathdot_{F\oo F}(F,F).
\]
In particular,
$s_*(KGL\wedge{\Pinfty}^{\wedge\mathdot+1}) \cong
 s_0(KGL)\oo\Z[u,u^{-1}]\oo C^\mathdot_{F\oo F}(F,F).$
\end{prop}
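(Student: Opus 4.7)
The plan is to apply the slice functor $s_*$ to the isomorphism of cosimplicial ring spectra $E\wedge{\Pinfty}^{\wedge\mathdot+1}\cong E\oo C^\mathdot_{F\oo F}(F,F)$ from Proposition \ref{toy.global}, and then invoke the commutation of slices with direct sums (Proposition \ref{slice.sum}) to push $s_*$ past the tensor product with $F^{\oo n+1}$. For the first isomorphism, observe that in each cosimplicial degree $n$ the right-hand side is $E\oo F^{\oo n+1}$, and since $F$ is free abelian on the basis $\{\alpha_k\}$ constructed in Section \ref{sec:binomial}, so is $F^{\oo n+1}$. Thus $E\oo F^{\oo n+1}$ is literally a countable wedge of copies of $E$, and Proposition \ref{slice.sum} yields $s_*(E\oo F^{\oo n+1})\cong s_*(E)\oo F^{\oo n+1}$. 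The cosimplicial structure maps (cofaces induced by the unit $\Z\to F$ and codegeneracies induced by the product $F\oo F\to F$) act as block matrices between wedge summands, so naturality forces $s_*$ to preserve them; the ring structure at each cosimplicial level is handled the same way.

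The second isomorphism then follows from Lemma \ref{Kslice.ring}, which identifies $s_*(E)$ with $s_0(E)\oo\Z[u,u^{-1}]$ as graded ring spectra, by tensoring with the cobar complex $F^{\oo\mathdot+1}=C^\mathdot_{F\oo F}(F,F)$ (cf.\ Example \ref{triv.algebroid}). The in-particular statement specializes to $E=KGL$, which is oriented with the multiplicative formal group law $x+y+uxy$ and whose $u$ is the periodicity unit in $KGL_{2,1}$.

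The main obstacle I anticipate is bookkeeping rather than substance: ensuring that the identification $s_*(E)\cong s_0(E)\oo\Z[u,u^{-1}]$ from Lemma \ref{Kslice.ring} is consistent with the change of orientation used in Proposition \ref{toy.global} (the substitution $\beta_n=u^n\alpha_n$), and that the cosimplicial ring structure lines up at every level after all three identifications are assembled. Once the dictionary $s_q(E)\leftrightarrow T^q\wedge s_0(E)$ is fixed and matched to $\alpha_n\leftrightarrow u^{-n}\beta_n$, the remaining compatibilities reduce to naturality of Propositions \ref{toy.global} and \ref{slice.sum}.
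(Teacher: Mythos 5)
Your proof is correct and follows essentially the same route as the paper's: both rest on the ``periodic'' form of the projective bundle decomposition (the paper applies $s_q$ to \eqref{proj.periodic} to get $s_q(E)\oo F\cong s_q(E\wedge\Pinfty)$ and iterates, while you apply $s_*$ directly to the isomorphism of Proposition~\ref{toy.global} and use Proposition~\ref{slice.sum} to commute $s_*$ past the wedge), and both invoke Lemma~\ref{Kslice.ring} for the second identification. The difference is purely expository: the paper defers to Example~\ref{slice.Pinfty} and Proposition~\ref{toy.global} for the cosimplicial bookkeeping, whereas you spell out that the structure maps act as block matrices between wedge summands and so are preserved by $s_*$ via naturality.
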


\begin{proof}
Applying $s_q$ to \ref{proj.periodic} yields the simple formula,
with component maps $s_q(\alpha_n)$:
\[
s_q(E)\oo F =
\bigoplus\nolimits_{n=0}^\infty\; s_q(E) \map{\cong} s_q(E\wedge\Pinfty).
\]
The rest is immediate from Proposition \ref{toy.global}
and Example \ref{slice.Pinfty}.
\end{proof}

When $E=KGL$, Propositions \ref{toy.global} and \ref{toy.slice} give
the formulas for $KGL\wedge{\Pinfty}^{\wedge\mathdot+1}$ 
and its slices which were mentioned in the Introduction.

\smallskip
\section{$E\wedge KGL$}\label{sec:E.K}

In this section, we describe the augmented cosimplicial spectrum
$E\wedge KGL^{\wedge\mathdot+1}$ associated to an oriented spectrum $E$
and the triple $-\wedge KGL$. 
\[
E \map{\eta_L} E\wedge KGL \rightrightarrows E\wedge KGL\wedge KGL 
\threerightarrows E\wedge KGL^{\wedge3} \cdots
\]
Replacing $E$ by $E\wedge KGL^{\wedge n}$,
we are largely reduced to the description of $E\wedge KGL$, $\eta_L$
and the map $\sigma^0:E\wedge KGL\wedge KGL\to E\wedge KGL$.

We begin with a few generalities.
The $T$-spectrum $KGL$ comes with a periodicity isomorphism
$T\wedge KGL\to KGL$; see \cite[6.8]{V-ICM} and \cite[3.3]{Vslice};
our description of it is taken from \cite[1.3]{PPR}.
Let $\xi:T \to\Pinfty$ be the map that 
classifies the tautological line bundle on $\mathbb P^{1}$, and let 
$b:\spherespectrum\wedge\Pinfty \to T^{-1}\wedge\spherespectrum\wedge\Pinfty$
denote the adjoint of the map
\begin{equation*}
   \xymatrix{
   T\wedge\Pinfty\;     \ar[r]^-{\xi\wedge1}& \;
   \Pinfty\wedge\Pinfty \ar[r]^-{m}& \Pinfty.}
\end{equation*}
As observed by Gepner-Snaith \cite[4.17]{GS} and 
Spitzweck-{\O}stv{\ae}r \cite{SO}, 
$KGL$ is the homotopy colimit of the resulting sequence,
\begin{equation*}
\xymatrix{\spherespectrum\wedge \Pinfty \ar[r]^-{b}& 
T^{-1}\wedge \spherespectrum\wedge \Pinfty \ar[r]^-{T^{-1}b}& 
T^{-2}\wedge \spherespectrum\wedge \Pinfty \ar[r]^-{T^{-2}b}& \cdots,}
\end{equation*}
and the colimit map $\spherespectrum\wedge\Pinfty\to KGL$ is the map
classifying the virtual tautological line bundle $\xi-1$ of degree~0.
We write $u_K$ for the map 
$T\map{}\spherespectrum\wedge\Pinfty\map{\xi-1}KGL$. 
Then multiplication by $u_K$ is the periodicity isomorphism
\begin{equation}\label{def:u} 
\xymatrix{
T\wedge KGL \ar[r]^-{u_K\wedge1}& KGL\wedge KGL \map{m} KGL}
\end{equation}
for the $T$-spectrum $KGL$; see \cite[1.3.3]{PPR}.  

Smashing $b$ with any spectrum $E$ yields maps 
$b_E:E\wedge\Pinfty\to T^{-1}\wedge E\wedge\Pinfty$, and
yields 
a sequence of $T$-spectra 
\begin{equation}\label{seq.spectra.3}
\xymatrix{ E\wedge \Pinfty \ar[r]^-{b_E}& 
T^{-1}\wedge E\wedge \Pinfty \ar[r]^-{T^{-1}b_E}& 
T^{-2}\wedge E\wedge \Pinfty \ar[r]^-{T^{-2}b_E}& \cdots,}
\end{equation}
with homotopy colimit $E\wedge KGL$, parallel to the sequence
\eqref{seq.spectra.1} in topology.

When $E$ is oriented, the adjoint $E\wedge T\wedge\Pinfty\to E\wedge\Pinfty$
of $b_E$ is the composition of the map
$\beta_1: E\wedge T\to E\wedge\Pinfty$ of \eqref{proj.decomp}, 
smashed with $\Pinfty$, with the product on $\Pinfty$.
Therefore $b_E$ corresponds to multiplication by $\beta_1$, and
the following observation follows from Theorem \ref{Adams.a1jk}.

\begin{lem}\label{Adams.bis}
If $E$ is oriented,
the components $E\wedge T^j\to T^{-1}\wedge E\wedge T^k$
of the map $b_E$ of \eqref{seq.spectra.3}
are multiplication by $k\ a_{1,1+j-k}$.
\end{lem}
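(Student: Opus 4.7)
The plan is to obtain Lemma \ref{Adams.bis} as an almost immediate corollary of two facts already in place: the identification, in the sentence preceding the statement, of $b_E$ with ``multiplication by $\beta_1$,'' and Theorem \ref{Adams.a1jk}, which evaluates the relevant matrix coefficients.

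First I would make the identification explicit on summands of the projective bundle decomposition \eqref{proj.decomp}. The adjoint of $b_E$ is the composite
\[
E\wedge T\wedge\Pinfty \xrightarrow{\beta_1\wedge\Pinfty}
E\wedge\Pinfty\wedge\Pinfty \xrightarrow{m} E\wedge\Pinfty.
\]
Restricting to the summand $E\wedge T\wedge T^{j}$ of $E\wedge T\wedge\Pinfty$ and projecting onto the summand $E\wedge T^{k}$ of $E\wedge\Pinfty$, the first map is $\beta_{1}\wedge\beta_{j}$ (by definition of the $\beta_n$), and then the composite with the product $m$ followed by the projection $c^k$ is exactly the matrix entry $a_{1j}^{k}\colon E\wedge T^{1+j}\to E\wedge T^{k}$, by the definition of these coefficients given at the start of Section \ref{sec:KGL}. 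Passing back across the $T$-suspension adjunction just shuffles the $T$ factor from source to target, so the component of $b_E\colon E\wedge T^{j}\to T^{-1}\wedge E\wedge T^{k}$ is the same map $a_{1j}^{k}$.

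Finally I would invoke Theorem \ref{Adams.a1jk} to rewrite $a_{1j}^{k}=k\,a_{1,1+j-k}$, which is the claim. There is no real obstacle here; the only thing that needs a moment's care is the degree bookkeeping under the adjunction, but since this only moves a single $T$ across via the standard suspension isomorphism of $\SH$, it is routine. In effect, the lemma is just the motivic restatement of Adams' classical computation, made possible by the projective bundle theorem for oriented $E$ and the explicit form of the formal group law coefficients.
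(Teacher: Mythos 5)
Your argument is correct and follows essentially the same route as the paper: the paper presents this lemma as an immediate consequence of the paragraph preceding it (identifying $b_E$ with multiplication by $\beta_1$) together with Theorem \ref{Adams.a1jk}, and your write-up simply makes those two steps explicit, including the unpacking of the adjunction and the identification of the relevant component with the matrix entry $a_{1j}^{k}$.
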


We now assume that $E$ has a multiplicative formal group law 
$\mu=x+y+uxy$ with $u\in E_{2,1}$ a unit of $E_*$. If 
$\alpha_n:E\to E\oo F\cong E\wedge\Pinfty$
is the map defined by \eqref{proj.periodic}, 
Lemma \ref{Adams.bis} says that
\[
b_E(\beta_n) = n\, u\, \beta_n + (n+1)\beta_{n+1}, \quad\text{or}\quad
b_E(\alpha_n) = n\,\alpha_n + (n+1)\alpha_{n+1}.
\]
Comparing with \eqref{eq:T}, 
this shows that the map 
$b_E: E\wedge\Pinfty\to T^{-1}\wedge E\wedge\Pinfty$ is induced from
the homomorphism $T:F\to F$, $T(f)=tf$. 
Recall from Definition \ref{def:H} 
that the Hopf algebra $H$ is defined by $H=F[1/t]$ and that 
$C_H^n(\Z,\Z)=H^{\oo n}$ in the classical cobar complex 
(see Example \ref{triv.algebroid}).

\begin{thm}\label{EK.toy}
If $E$ has a multiplicative group law $x+y+uxy$ with $u$ a unit,
we have $E\wedge KGL \cong E\oo H$,
and isomorphisms of cosimplicial spectra for all $q\in\Z$:
\begin{align*}
E\wedge KGL^{\wedge\mathdot+1} \cong\ & E\oo H^{\oo\mathdot+1} \cong
E\oo C^\mathdot_{H}(\Z,\Z);
\\
s_*(E\wedge KGL^{\wedge\mathdot+1}) \cong\  &
s_*(E) \oo H^{\oo\mathdot+1} \cong
s_*(E) \oo C^\mathdot_{H}(\Z,\Z).
\end{align*}
In addition, $E\to E\wedge KGL$ corresponds to $E\oo\Z\to E\oo H$.
\end{thm}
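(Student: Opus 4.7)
The plan is to prove the statements in three stages: first the case $n=0$ that $E\wedge KGL\cong E\oo H$, then extend to the cosimplicial spectrum, and finally take slices. For the base case, the Snaith sequence \eqref{seq.spectra.3} presents $E\wedge KGL$ as the homotopy colimit of $T^{-i}\wedge E\wedge\Pinfty$ under the maps $b_E$. The projective bundle formula \eqref{proj.periodic} identifies $E\wedge\Pinfty$ with $E\oo F$, and the computation immediately preceding the theorem (using Lemma \ref{Adams.bis}) shows that under this identification $b_E$ corresponds to the endomorphism $T$ of \eqref{eq:T}. Since the construction $E\oo(-)$ of Notation \ref{Notation} is given by cofibers of maps between wedges of $T$-suspensions of $E$, it commutes with filtered colimits along maps of free graded abelian groups. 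Applying this to the colimit $F[1/t]=H$ of $F\map{T}F\map{T}\cdots$ yields $E\wedge KGL\cong E\oo H$.

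For the cosimplicial identification, I induct on $n$, using that $E\wedge KGL^{\wedge n}$ inherits an orientation with multiplicative group law $x+y+uxy$ (same $u\in E_{2,1}$) from $E$ via the unit map $E\to E\wedge KGL^{\wedge n}$. Applying the base case to this oriented spectrum gives
\[
E\wedge KGL^{\wedge n+1}\;\cong\;(E\wedge KGL^{\wedge n})\oo H\;\cong\; E\oo H^{\oo n+1}.
\]
To match the cofaces $\partial^j$: these come from the unit $\spherespectrum\to KGL$, which factors through $\spherespectrum\to\Pinfty$ and corresponds by Lemma \ref{coface.P} to the insertion $\Z\hookrightarrow F$, which after localizing $t$ is the unit $\Z\to H$, precisely the cofaces of $C^\mathdot_H(\Z,\Z)$. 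To match the codegeneracies $\sigma^j$: the product $KGL\wedge KGL\to KGL$ is, via the Snaith presentation, obtained from the product $\Pinfty\wedge\Pinfty\to\Pinfty$ by passing to the colimit in each factor. Proposition \ref{toy.global} (together with Corollary \ref{E.Pdot} and Lemma \ref{aijk}) identifies the latter with the multiplication $F\oo F\to F$, whose localization $H\oo H\to H$ is exactly the Hopf-algebra multiplication on $H$ from Theorem \ref{H=Hopf}, matching the codegeneracies of the cobar complex.

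For the slice version, note that $H$ (hence $H^{\oo n+1}$) is free abelian by the theorem just before Remark \ref{rem:F-H}, so $E\oo H^{\oo n+1}$ is a wedge of $T$-suspensions of $E$; by Proposition \ref{slice.sum}, $s_*$ distributes over such wedges, yielding $s_*(E\oo H^{\oo n+1})\cong s_*(E)\oo H^{\oo n+1}$ levelwise and compatibly with the (co)faces and codegeneracies established above. The final claim, that $E\to E\wedge KGL$ corresponds to $E\oo\Z\to E\oo H$, is the $n=0$ case of the coface matching. The main technical obstacle I anticipate is Stage 2: one must verify that the colimit construction is compatible with all codegeneracies simultaneously (since $\sigma^j$ couples two adjacent $KGL$ factors while we invert $t$ in each factor independently), and confirm that the resulting multiplication $H\oo H\to H$ is literally the Hopf-algebra multiplication of Theorem \ref{H=Hopf} rather than some other extension of the multiplication on $F$; the multiplicativity of the formal group law, ensuring that Lemma \ref{Adams.bis} delivers exactly the endomorphism $T$, is what makes this work.
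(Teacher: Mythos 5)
Your proof is correct and follows essentially the same route as the paper's: identify $b_E$ with multiplication by $t$ on $F$, pass to the Snaith colimit to get $E\wedge KGL\cong E\oo H$, induct by replacing $E$ with $E\wedge KGL^{\wedge n}$ (oriented via the left unit), and quote Propositions \ref{toy.global}, \ref{toy.slice} and \ref{slice.sum} to match the cofaces, codegeneracies, and slices. You are somewhat more careful than the paper about two points the paper leaves implicit: that $E\oo(-)$ commutes with the sequential colimit defining $F[1/t]$ (justified since $F$ and $H$ are free and the colimit gives a two-term free resolution of $H$, matching Notation \ref{Notation}), and that the product on $KGL$ is literally the localization of the Segre product on $\Pinfty$, so that Proposition \ref{toy.global}'s identification of the codegeneracies with the multiplication $F\oo F\to F$ localizes to the multiplication $H\oo H\to H$. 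One small terminological slip: what you call "the Hopf-algebra multiplication of Theorem \ref{H=Hopf}" is just the ring multiplication of $H$ — Theorem \ref{H=Hopf} supplies the comultiplication, which is used for the cofaces $\Delta$, not the codegeneracies; the multiplication is the one inherited from $F$ via \eqref{alpha*alpha}, and that is indeed what the localization argument delivers.
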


\begin{proof}
By \eqref{eq:T}, multiplication by $t$ on $E\oo F$ is given by
the same formula as multiplication by $b_E$ on $E\wedge\Pinfty$.
It follows that the homotopy colimit $E\wedge KGL$ of the sequence
\eqref{seq.spectra.3} is the same as 
$E\oo \text{colim}(F\map{t}F\to\cdots) = E\oo H$. Replacing $E$ by
$E\wedge KGL^{\wedge n}$ shows that 
$E\wedge KGL^{\wedge n+1} \cong E\oo H^{\oo n+1}$ and hence that
$s_*(E\wedge KGL^{\wedge n+1}) \cong s_*(E)\oo H^{\oo n+1}$.
The coface and codegeneracies of these cosimplicial spectra are 
identified by Propositions \ref{toy.global} and \ref{toy.slice}.
\end{proof}

When $E=KGL$ we get the formula mentioned in the Introduction:

\begin{cor}\label{K.toy}
$KGL\wedge KGL \cong KGL\oo H$, and 
$KGL^{\wedge\mathdot+2} \cong KGL\oo H^{\oo\mathdot+1}$.
In addition, $\eta_L:KGL\to KGL\wedge KGL$
corresponds to $KGL\oo\Z\to KGL\oo H$.
\end{cor}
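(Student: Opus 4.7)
The plan is simply to specialize Theorem \ref{EK.toy} to the case $E = KGL$. The only thing to verify is that $KGL$ satisfies the hypotheses of that theorem, namely that it is an oriented motivic ring spectrum whose formal group law has the multiplicative form $x+y+uxy$ with $u$ a unit of $KGL_{2,1}$. This is precisely the content of the discussion in Section \ref{sec:KGL}: by \cite[1.3.1, 1.3.3]{PPR}, $KGL$ is oriented with multiplicative formal group law, and the periodicity map $u_K\colon T\to KGL$ constructed in \eqref{def:u} provides the required unit in $KGL_{2,1}$.

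With this verification in hand, the first isomorphism $KGL\wedge KGL\cong KGL\oo H$ is the $n=0$ instance of the cosimplicial isomorphism in Theorem \ref{EK.toy} applied to $E=KGL$. More generally, taking $E=KGL$ in the cosimplicial isomorphism $E\wedge KGL^{\wedge\mathdot+1}\cong E\oo H^{\oo\mathdot+1}$ yields $KGL^{\wedge\mathdot+2}\cong KGL\oo H^{\oo\mathdot+1}$, which is the second claim.

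For the final assertion, Theorem \ref{EK.toy} also records that the unit $E\to E\wedge KGL$ corresponds under the isomorphism to the map $E\oo\Z\to E\oo H$ induced by the unit $\Z\to H$ of the Hopf algebra. Specializing once more to $E=KGL$, the left unit $\eta_L\colon KGL\to KGL\wedge KGL$ corresponds to $KGL\oo\Z\to KGL\oo H$, as claimed.

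There is no real obstacle here: the corollary is pure bookkeeping, extracting the statements for $E=KGL$ from the general theorem proved immediately above. The only substantive point is recalling that $KGL$ itself meets the hypotheses (orientation, multiplicative group law, and invertibility of $u_K$), all of which are standard and recorded earlier in the paper.
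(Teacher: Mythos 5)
Your proposal is correct and is essentially the same as the paper's: Corollary \ref{K.toy} is stated immediately after Theorem \ref{EK.toy} with the single remark ``When $E=KGL$ we get the formula mentioned in the Introduction,'' i.e.\ it is exactly the specialization $E=KGL$ that you carry out. You are a bit more explicit than the paper in verifying the hypotheses (that $KGL$ is oriented with multiplicative group law $x+y+uxy$ and $u_K$ invertible, from \cite[1.3]{PPR} and \eqref{def:u}), and in identifying the augmentation $E\to E\wedge KGL$ with $1\wedge\eta_K=\eta_L$ once $E=KGL$; both points are standard and consistent with the surrounding text.
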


\begin{exam}\label{Q.toy}
When $E=E\oo\Q$, we have 
$E\wedge KGL^{\wedge n}\cong E\oo\Q[t_1,t_1^{-1},...,t_n,t_n^{-1}]$,
because $\Q\oo H=\Q[t,t^{-1}]$.
\end{exam}

%

\medskip
Set $q=\ell^\nu$ and write $E/q$ for the cofiber $E\oo\Z/q$ of $E\map{q}E$.
An elementary calculation shows that $(E/q)\oo H\cong E\oo H/qH$.

\begin{cor}\label{E.K mod-l}
Suppose that $E$ has a multiplicative formal group law 
$x+y+uxy$, such that  $u$ is a unit of $E_{*,*}$. Then the
family of maps $\beta_n:E\wedge T^{n}\to E\wedge\Pinfty\to E\wedge KGL$
with $n\not\equiv0\pmod{\ell}$
induces an isomorphism
\[ 
E\oo H/\ell^\nu H \cong
\bigoplus_{n\not\equiv0} (E/\ell^\nu)\wedge T^{n}
\map{\cong} (E/\ell^\nu)\wedge KGL.
\]
Moreover, the slices $s_m(E/\ell^\nu\wedge KGL)$ are isomorphic to 
$\bigoplus_{n\not\equiv0}\, s_{m-n}(E/\ell^\nu)\wedge T^n$.
\end{cor}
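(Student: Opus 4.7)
The plan is to transport the mod $\ell^\nu$ basis for $H$ supplied by Theorem \ref{H/q.basis} across the identification $E\wedge KGL\cong E\oo H$ of Theorem \ref{EK.toy}. The hypotheses on $E$ (oriented with multiplicative group law $x+y+uxy$, $u$ a unit) are precisely those of Theorem \ref{EK.toy}, so that theorem applies and yields $E\wedge KGL\cong E\oo H$. Smashing this isomorphism with the mod $\ell^\nu$ Moore spectrum and using the observation in the paragraph preceding the statement that $(E/\ell^\nu)\oo H\cong E\oo H/\ell^\nu H$ gives
\[
(E/\ell^\nu)\wedge KGL \;\cong\; E\oo\bigl(H/\ell^\nu H\bigr).
\]

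Next, because $H/\ell^\nu H$ is a $\Z/\ell^\nu$-module, Theorem \ref{H/q.basis} exhibits the set $\{\alpha_n : n>0,\; n\not\equiv 0\pmod\ell\}$ as a $\Z/\ell^\nu$-basis. By the definition of the $\oo$-construction recalled in Notation \ref{Notation}, a free basis on the algebraic side translates into a wedge decomposition on the spectrum side, so the right-hand side splits as a direct sum of copies of $E/\ell^\nu$ indexed by the allowed $n$. The periodicity relation $\beta_n=u^n\alpha_n$ from Variant \ref{variant:v=ut} identifies the $n$-th summand with $(E/\ell^\nu)\wedge T^n$ in such a way that its inclusion into $(E/\ell^\nu)\wedge KGL$ is precisely the composition labelled $\beta_n$ in the statement. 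This establishes the first displayed isomorphism.

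The statement about slices is then immediate. Apply $s_m$ to the wedge decomposition just obtained; Proposition \ref{slice.sum} says $s_m$ commutes with direct sums, and the formula $s_q(X\wedge T^n)=s_{q-n}(X)\wedge T^n$ recalled at the start of Section \ref{sec:slice} converts each summand $s_m\bigl((E/\ell^\nu)\wedge T^n\bigr)$ into $s_{m-n}(E/\ell^\nu)\wedge T^n$, yielding the second claim.

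I do not foresee a significant obstacle: the algebraic computation of the basis of $H/\ell^\nu H$ is already in place via Section \ref{sec:modp}, the global identification $E\wedge KGL\cong E\oo H$ is Theorem \ref{EK.toy}, and the slice commutes-with-direct-sums fact is Proposition \ref{slice.sum}. The only point requiring care is the degree-shift bookkeeping between the algebraic generators $\alpha_n$ (which all correspond to $E$-module maps out of $E$) and the spectrum-level $\beta_n$ (which come from $E\wedge T^n$); this is handled cleanly by the single factor of $u^n$ supplied by periodicity, as in Variant \ref{variant:v=ut}.
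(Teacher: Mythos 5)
Your argument is correct and follows the paper's intended route exactly: the paper's proof of this corollary simply reads ``Immediate from Theorems \ref{H/q.basis} and \ref{EK.toy},'' and you have filled in precisely the steps that remark is compressing — smash $E\wedge KGL\cong E\oo H$ with the mod-$\ell^\nu$ Moore spectrum, invoke the $\Z/\ell^\nu$-basis $\{\alpha_n : n>0,\ n\not\equiv0\pmod\ell\}$ of $H/\ell^\nu H$ from Theorem \ref{H/q.basis}, reindex each summand via periodicity so that $\alpha_n$ becomes $\beta_n$ on $(E/\ell^\nu)\wedge T^n$, and then apply Proposition \ref{slice.sum} together with $s_q(X\wedge T^n)\cong s_{q-n}(X)\wedge T^n$ for the slice statement. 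No gap; same approach as the paper, just spelled out.
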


\begin{proof}
Immediate from Theorems \ref{H/q.basis} and  \ref{EK.toy}.
\end{proof}

%

\section{The cosimplicial $KGL$ spectrum}\label{sec:final}

In this section, we complete the information in the previous
section to determine the slices of the cosimplicial spectrum
$N^\mathdot KGL$, with $N^nKGL=KGL^{\wedge n+1}$. 
\[
KGL \rightrightarrows KGL\wedge KGL \threerightarrows
KGL^{\wedge3} \fourrightarrows\cdots
\]
Its codegeneracies are given by the product on $KGL$, 
and its coface maps are given by insertion of the unit 
$\eta_K:\spherespectrum\to KGL$; in particular, 
$\partial^0,\partial^1:KGL\to KGL\wedge KGL$ are the canonical maps 
$\eta_R=\eta_K\wedge1$ and $\eta_L=1\wedge\eta_K$, respectively. 
There is also an involution $c$ of $KGL\wedge KGL$ swapping the 
two factors, and we have $\eta_R=c\circ\eta_L$. 


By Corollary \ref{K.toy},
there are isomorphisms $KGL\oo H^{\oo n}\map{\cong} KGL^{\wedge n+1}$ 
such that the diagram
\begin{equation*}
\xymatrix{
KGL \oo\Z\ar[r]^-{1\wedge\text{incl}}\ar[d]^-{\cong}& 
KGL\oo H\ar[d]^-{\cong} \\
KGL \ar[r]^-{\eta_L} & KGL\wedge KGL
}\end{equation*}
commutes. As $\partial^1=\eta_L$, 
this establishes the initial case of the following lemma.

\begin{lem}\label{not.partial0}
The isomorphisms $KGL\oo H^{\oo n}\map{\cong} KGL^{\wedge n+1}\!$
of Corollary \ref{K.toy}
are compatible with all the coface and codegeneracy operators
of $N^\mathdot KGL$, except possibly for $\partial^0$ and $\sigma^0$.
\end{lem}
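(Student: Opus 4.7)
The plan is to deduce the lemma from Theorem~\ref{EK.toy} applied with $E = KGL$. First, specialising that theorem to $E = KGL$ produces an isomorphism of cosimplicial spectra
\[
KGL \wedge KGL^{\wedge\mathdot+1} \cong KGL \oo H^{\oo\mathdot+1},
\]
where the left-hand side carries the standard cosimplicial structure coming from the triple $X \mapsto X \wedge KGL$ acting on $E = KGL$, and the right-hand side carries the cobar structure of Corollary~\ref{cobar}. At each level the underlying object-level isomorphism agrees with the one of Corollary~\ref{K.toy} whose compatibilities are in question.

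Next, I would identify the cosimplicial object $E \wedge KGL^{\wedge\mathdot+1}$ (with $E = KGL$ fixed) with the \emph{d\'ecalage} $\tilde N^\mathdot$ of $N^\mathdot KGL$, i.e., the cosimplicial object with $\tilde N^n = N^{n+1}KGL = KGL^{\wedge n+2}$ and structure maps $\tilde \partial^i = \partial^{i+1}$, $\tilde \sigma^j = \sigma^{j+1}$. The key combinatorial point is that the triple-resolution cofaces insert $\eta_K$ into one of the $n+2$ positions strictly to the right of the base $E$, so under $E \wedge KGL^{\wedge n+1} = KGL^{\wedge n+2}$ they match exactly the cofaces $\partial^1, \ldots, \partial^{n+2}$ of $N^\mathdot KGL$ at that level, and a symmetric matching holds for the codegeneracies. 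This indexing bookkeeping is the only nontrivial aspect of the argument.

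Combining the two observations, the cosimplicial isomorphism of Theorem~\ref{EK.toy} automatically yields the desired compatibility with every $\partial^i$ for $i\ge1$ and every $\sigma^j$ for $j\ge1$. The remaining operators $\partial^0$ and $\sigma^0$ involve the leftmost ``base'' $KGL$-factor, which the triple $-\wedge KGL$ treats as fixed and which therefore falls outside the d\'ecalage; hence this approach has nothing to say about them. This is not a technical defect of the proof but an essential feature---the substantive work on those two exceptional maps is deferred to Proposition~\ref{partial0} and Lemma~\ref{sigma0.t}, where the additional hypotheses of Theorem~\ref{thm:main}(b) come into play.
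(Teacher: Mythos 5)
Your proof is correct and takes essentially the same approach as the paper: both arguments recognize $KGL\wedge KGL^{\wedge\mathdot+1}$ as the d\'ecalage (dual path space) of $N^\mathdot KGL$ and then transport the cosimplicial isomorphism of Theorem \ref{EK.toy} with $E=KGL$, which accounts for exactly the operators $\partial^i$, $\sigma^j$ with $i,j\ge1$.
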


\begin{proof}
Recall the standard construction (the dual path space of
\cite[8.3.14]{WH}) which takes a cosimplicial object $X^\mathdot$
and produces a new cosimplicial object $Y^\mathdot$ with
$Y^n=X^{n+1}\!,$ coface maps $\partial_Y^i=\partial_X^{i+1}$ and
codegeneracy maps $\sigma_Y^i=\sigma_X^{i+1}$. Applying this
construction to $N^\mathdot KGL$ yields the cosimplicial spectrum
$KGL^{\wedge\mathdot+2}$ of Corollary \ref{K.toy}. Since this is
isomorphic to the cosimplicial spectrum $KGL\oo H^{\oo\mathdot}$
by Theorem \ref{EK.toy}, the result follows.
\end{proof}

\begin{proof}[Proof of Theorem \ref{thm:main}(a)]
Taking slices in Lemma \ref{not.partial0}, we see that the isomorphisms
$s_*(KGL)\oo H^{\oo n} \map{\cong} s_*(KGL^{\wedge n+1})$
are compatible with all of the coface and codegeneracy operators,
except possibly for $\partial^0$ and $\sigma^0$.
\end{proof}

\begin{rmk}
It is tempting to consider the cosimplicial spectrum
$KGL\oo C^\mathdot_H(\Z,\Z)$, where $C^\mathdot_H(\Z,\Z)$
is the cobar complex over $H$. However, its coface maps $\partial^0$ and
codegeneracy maps $\sigma^0$ are not the same as in $N^\mathdot KGL$.
\end{rmk}

To finish the proof of Theorem \ref{thm:main}(b), we need to show that
the isomorphims in Proposition \ref{not.partial0} are indeed compatible
with $\partial^0$ and $\sigma^0$, under the two additional assumptions.
We will do so in Propositions \ref{partial0} and \ref{sigma0}

\medskip
\begin{defn}
The map $v_K:T\wedge\spherespectrum \to KGL\wedge KGL$ is defined
to be $\eta_R(u_K)$, where $u_K:T\to KGL$ is the map of \ref{def:u}.

We define $v: T\wedge KGL\wedge KGL \to KGL\wedge KGL$
to be multiplication by $v_K$.
\end{defn}

\begin{lem}\label{etaR+uv}
We have commutative diagrams
\begin{equation*}
\xymatrix{
T\wedge KGL \ar[r]^-{T\wedge \eta_R} \ar[d]^-{u}&
T\wedge KGL \wedge KGL \ar[d]^-{v} \\
KGL \ar[r]^-{\eta_R} & KGL \wedge KGL
}
\quad
\xymatrix{
T\wedge s_q(KGL) \ar[r]^-{T\wedge \eta_R} \ar[d]^-{u}&
T\wedge s_q(KGL \wedge KGL) \ar[d]^-{v} \\
s_{q+1}(KGL) \ar[r]^-{\eta_R} & s_{q+1}(KGL \wedge KGL).
}\end{equation*}
\end{lem}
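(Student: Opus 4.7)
The strategy will be to derive the first (non-slice) diagram as a formal consequence of two facts: $\eta_R = \eta_K \wedge 1_{KGL}$ is a morphism of ring spectra, and $v_K$ is by \emph{definition} $\eta_R(u_K)$. The second diagram will then follow by applying the slice functor $s_{q+1}$, using the basic identity $s_{q+1}(T \wedge E) \cong T \wedge s_q(E)$ recorded in \cite[2.2]{Vslice}.

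To verify the first diagram, I will unravel the definitions. Since $\eta_R = \eta_K \wedge 1_{KGL}$, the map $v_K = \eta_R \circ u_K : T \to KGL \wedge KGL$ can be identified, under $T = \spherespectrum \wedge T$, with the smash product $\eta_K \wedge u_K$. The product $\mu$ on the ring spectrum $KGL \wedge KGL$ is the componentwise one, so the unit axiom implies that multiplication by $v_K = \eta_K \wedge u_K$ on $KGL \wedge KGL$ leaves the first $KGL$-factor fixed and multiplies the second by $u_K$. A short diagram chase should then show that both $\eta_R \circ u$ and $v \circ (T \wedge \eta_R)$ agree with the composite
\[
T \wedge KGL \xrightarrow{\;\eta_K \wedge u_K \wedge 1\;} KGL \wedge KGL \wedge KGL \xrightarrow{\;1 \wedge m\;} KGL \wedge KGL,
\]
establishing the first square.

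For the second diagram, I will apply $s_{q+1}$ to the first. Using the slice shift identity $s_{q+1}(T \wedge E) \cong T \wedge s_q(E)$ from \cite[2.2]{Vslice}, the four corners and four edges of the first diagram become exactly those of the second. Functoriality of $s_{q+1}$ then transports the commutativity.

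The main obstacle will be purely bookkeeping: one must carefully track the symmetry isomorphisms and the smash factors when identifying $v_K$ with $\eta_K \wedge u_K$ and when describing multiplication by $v_K$ on $KGL \wedge KGL$. Once these identifications are in place, the verification should be mechanical, and no input beyond the ring-spectrum structure on $KGL \wedge KGL$ and the slice shift formula should be needed.
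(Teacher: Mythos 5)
Your proposal is correct and takes essentially the same approach as the paper: both rely on the definition $v_K = \eta_R \circ u_K$ and the fact that $\eta_R$ is a ring map, and both obtain the slice square by applying $s_{q+1}$ and invoking $T\wedge s_q(E)\cong s_{q+1}(T\wedge E)$. The only cosmetic difference is that the paper packages the ring-map input as a commuting rectangle through $(KGL\wedge KGL)^{\wedge 2}$, whereas you unfold the unit axiom to factor both routes through $KGL^{\wedge 3}$.
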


\begin{proof}
For reasons of space, we write $K$ for $KGL$. In the diagram below,
the top rectangle commutes by definition of $v_K$, and the bottom
rectangle commutes because $\eta_R$ is a morphism of ring spectra.
\begin{equation*}
\xymatrix{
T\wedge\spherespectrum  \ar[r]^-{T\wedge\eta_K}\ar[ddr]_-{u_K}&
T\wedge K \ar[r]^-{T\wedge\eta_R} \ar[d]^-{u_K\wedge K} &
T\wedge (K\wedge K) \ar[d]_-{v_K\wedge K\wedge K} &
T\wedge\spherespectrum \ar[l]_-{T\wedge\eta_{(K\wedge K)}} \ar[ddl]^-{v_K}
\\
&K\wedge K \ar[d]^-{m_K} \ar[r]^-{\eta_R\wedge\eta_R} & 
(K\wedge K)^{\wedge2} \ar[d]_-{m_{K\wedge K}}\\
&K \ar[r]^-{\eta_R} & K\wedge K.
}\end{equation*}
The left and right verticals are the maps $u$ and $v$, respectively.
This establishes commutativity of the first square in Lemma \ref{etaR+uv}. 
As the second square is the composition of the natural isomorphism
$T\wedge s_q(E)\map{\cong} s_{q+1}(T\wedge E)$ and the $q^{th}$ slice of
the first square, it also commutes.
\end{proof}

Recall that $s_0(\spherespectrum)\to s_0(KGL)$ is an isomorphism
if the base $S$ is smooth over a perfect field $k$, or any scheme over
a field of characteristic~0. This is so because it is true over
any perfect field \cite{L}, and $s_0$ commutes with the pullback
$\SH(k)\to\SH(S)$ in these cases; see 
\cite[2.16]{P-functorial}           
and \cite[3.7]{P-functorial}. 

\begin{prop}\label{partial0}
Assume that $s_0(\spherespectrum)\to s_0(KGL)$ is an isomorphism.
Then the isomorphisms $s_q(KGL)\oo H^{\oo n}\map{\cong} s_q(KGL^{\wedge n+1})$
are compatible with $\partial^0$.
\end{prop}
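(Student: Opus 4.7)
The plan is to reduce the compatibility at arbitrary $n$ to the base case $n=0$, and then to handle the base case by splitting by slice degree: the $s_0$ level uses the hypothesis directly, while higher slices follow by bootstrapping via Lemma \ref{etaR+uv}.

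For the reduction, write $KGL^{\wedge n+1} = KGL \wedge KGL^{\wedge n}$ so that $\partial^0 = \eta_K \wedge 1_{KGL^{\wedge n+1}}$ factors as $\eta_R \wedge 1_{KGL^{\wedge n}}$, where $\eta_R: KGL \to KGL \wedge KGL$ is the base-case coface. Under the iterated Theorem \ref{EK.toy} identifications (with the leftmost $KGL$ factor distinguished), $\partial^0$ corresponds to a map $\widetilde{\eta}\oo 1_{H^{\oo n}}: KGL\oo H^{\oo n}\to KGL\oo H^{\oo n+1}$, where $\widetilde{\eta}: KGL\to KGL\oo H$ is the image of $\eta_R$ under $KGL\wedge KGL \cong KGL\oo H$. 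Since the slice functors commute with direct sums (Proposition \ref{slice.sum}) and $H^{\oo n}$ is a free graded abelian group, we get $s_*(\partial^0) = s_*(\widetilde{\eta})\oo 1_{H^{\oo n}}$. So it suffices to identify $s_*(\widetilde{\eta})$ with the right unit $\eta_R$ of the Hopf algebroid $(s_*(KGL), s_*(KGL)\oo H)$ from Example \ref{RH.algebroid}.

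For the base case, decompose $s_*(KGL) = s_0(KGL)\oo\Z[u,u^{-1}]$ via Lemma \ref{Kslice.ring} and verify the identification slice by slice. At $s_0$: both composites $\eta_L\circ\eta_K$ and $\eta_R\circ\eta_K$ equal $\eta_K\wedge\eta_K: \spherespectrum\to KGL\wedge KGL$, so applying $s_0$ and invoking the hypothesis that $s_0(\eta_K)$ is an isomorphism yields $s_0(\eta_L) = s_0(\eta_R)$. By Corollary \ref{K.toy}, $s_0(\eta_L)$ is the canonical inclusion $s_0(KGL)\oo\Z \hookrightarrow s_0(KGL)\oo H$, which is precisely the restriction to $s_0(KGL)$ of the Hopf algebroid right unit. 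For $q\ge 1$: iterate the commutative square of Lemma \ref{etaR+uv}, using that $v_K=\eta_R(u_K)$ corresponds to $ut\in KGL\oo H$ under the iso of Corollary \ref{K.toy} (a motivic analog of Adams' formula from \cite[II.13.4]{A}); this produces $s_q(\widetilde{\eta})(u^q\cdot x) = (ut)^q\cdot x$ for $x\in s_0(KGL)$, matching the Hopf algebroid formula $\eta_R(u^q) = v^q$ exactly.

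The main obstacle I expect is pinning down the identification $v_K\leftrightarrow ut$ and verifying that the operator $v$ of Lemma \ref{etaR+uv} becomes multiplication by $ut$ under the isomorphism $s_*(KGL\wedge KGL)\cong s_*(KGL)\oo H$ of Theorem \ref{EK.toy}. Once this motivic counterpart of Adams' formula is in place, the rest of the argument is a chain of \emph{a priori} straightforward identifications using the material of Sections \ref{sec:KGL}--\ref{sec:E.K}.
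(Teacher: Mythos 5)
Your proposal is correct and follows essentially the same route as the paper's proof: the $s_0$ case comes from the hypothesis together with the factorization of both cofaces through the unit of $\spherespectrum$, higher slice degrees are then pinned down by the commutative square of Lemma \ref{etaR+uv} together with Lemma \ref{Kslice.ring}, and the passage from $n=0$ to general $n$ uses $\partial^0=\eta_R\wedge 1_{KGL^{\wedge n}}$. The identification $v_K\leftrightarrow ut$ you flag as the residual issue is exactly what the paper accomplishes by invoking Lemma \ref{Kslice.ring} for $KGL\wedge KGL$ with unit $v_K$ and matching against the formula $\eta_R(u)=tu$ of Example \ref{RH.algebroid}; the commutative square recorded just after the proof (relating $v$ to $b_K$) makes that identification explicit.
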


\begin{proof}
Since $\spherespectrum$ is an initial object, there is a unique
map from $\spherespectrum$ to $KGL\wedge KGL$; 
the assumption implies that $\eta_L=\eta_R$ as maps
$s_0(KGL) \to s_0(KGL\wedge KGL)$. Now for any $q\in\Z$,
Lemma \ref{Kslice.ring} says that $u^q:T^q\wedge s_0(KGL)\map{\cong}s_q(KGL)$.
By Lemma \ref{etaR+uv}, we have a diagram
\begin{equation*}
\xymatrix{
T^q\wedge s_0(KGL) \ar[r]^-{T^q\wedge \eta_R} \ar[d]^-{u^q}_{\cong}&
\; T^q\wedge s_0(KGL \wedge KGL) \ar[d]^-{v^q} \\
s_{q}(KGL) \ar[r]^-{\eta_R} & s_{q}(KGL \wedge KGL).
}\end{equation*}
This determines the maps $\eta_R:s_{q}(KGL)\to s_{q}(KGL \wedge KGL)$.
Summing over $q$, and invoking Lemma \ref{Kslice.ring} twice,
we see that $\eta_R$ is the map 
\[
s_0(KGL)\oo\Z[u,u^{-1}] \to s_0(KGL)\oo\Z[u,u^{-1},v,v^{-1}]
\]
sending the copy of $s_0(KGL)$
indexed by $u^q$ to the copy of $s_0(KGL)$ indexed by $v^q$.
Since we saw in Example \ref{RH.algebroid} that this is the same as
the map induced from
$\eta_R:\Z[u,u^{-1}]\to\Z[u,u^{-1}]\oo H$, 
this shows that we have a commutative diagram
\begin{equation*}
\xymatrix{
\bigoplus s_q(KGL) \ar[r]^-{1\oo\eta_R} \ar[d]^-{=} & 
\bigoplus s_q(KGL)\oo H \ar[d]^-{\cong} \\
\bigoplus s_q(KGL) \ar[r]^-{\partial^0}& \bigoplus s_q(KGL\wedge KGL).
}\end{equation*}
The result follows for $n>0$, since
$\partial^0:KGL^{\wedge n}\to KGL^{\wedge n+1}$ is
$\eta_R\wedge KGL^{\wedge n-1}$.
\end{proof}

\medskip

To conclude the proof of Theorem \ref{thm:main}, we have to compare 
the slices of the codegeneracy $\sigma^0:KGL\wedge KGL\to KGL$ (the product)
with the map of Example \ref{RH.algebroid},
\begin{equation}\label{sigma0.target}
s_0(KGL)\oo\Z[u,u^{-1}]\oo H \map{\varepsilon} s_0(KGL)\oo\Z[u,u^{-1}],
\quad  \varepsilon(u)=\varepsilon(v)=u.
\end{equation}
Recall from \ref{def:H} that $\Z[t,t^{-1}]$ is a subring of $H$, and that
$\varepsilon:H\to\Z$ sends $t$ to $1$.

\begin{lem}\label{sigma0.t}
The restriction of the product 
$s_*(KGL\wedge KGL)\to s_*(KGL)$ to
\[
s_*(KGL)\oo\Z[t,t^{-1}] \to s_*(KGL)\oo H \cong  s_*(KGL\wedge KGL)
\]
equals the composition 
\[
s_*(KGL)\oo\Z[t,t^{-1}] \map{\varepsilon}s_*(KGL)\oo\Z
\cong s_*(KGL).
\]
\end{lem}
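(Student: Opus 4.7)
The plan is to exploit the ring spectrum structure. Since $KGL$ is commutative, the codegeneracy $\sigma^0 = m_K : KGL\wedge KGL \to KGL$ is a map of ring spectra, and the induced map on slices $s_*(m_K) : s_*(KGL\wedge KGL) \to s_*(KGL)$ is a map of graded ring spectra. Moreover, the left unit axiom $m_K\circ\eta_L = \mathrm{id}_{KGL}$ makes $\sigma^0$ left $s_*(KGL)$-linear when the source is viewed as an $s_*(KGL)$-algebra via $\eta_L$; by Corollary \ref{K.toy} this $\eta_L$-action corresponds, under Theorem \ref{EK.toy}, to the left multiplication on $s_*(KGL)\oo H$. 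The target map $s_*(KGL)\oo\varepsilon$ is likewise an $s_*(KGL)$-algebra map. So it suffices to verify that the two maps agree on the single generator $t\in\Z[t,t^{-1}]\subset H$.

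To compute $\sigma^0(1\oo t)$, I would appeal to the element $v_K = \eta_R(u_K): T\to KGL\wedge KGL$ from Lemma \ref{etaR+uv}. Under the isomorphism $s_*(KGL\wedge KGL)\cong s_*(KGL)\oo H$, Example \ref{RH.algebroid} and Variant \ref{variant:v=ut} identify the class of $v_K$ with $v = u\oo t \in s_1(KGL)\oo H$, where $u \in s_1(KGL)$ is the periodicity generator of Lemma \ref{Kslice.ring}. The right unit axiom $m_K\circ\eta_R = \mathrm{id}_{KGL}$ then gives $\sigma^0(v) = m_K(\eta_R(u_K)) = u_K = u$ in $s_1(KGL)$. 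Combining this with $s_*(KGL)$-linearity of $\sigma^0$ and the invertibility of $u$ yields
\[
\sigma^0(1\oo t) \;=\; u^{-1}\cdot\sigma^0(u\oo t) \;=\; u^{-1}\cdot\sigma^0(v) \;=\; u^{-1}\cdot u \;=\; 1 \;=\; \varepsilon(t).
\]
Since $\sigma^0$ and $s_*(KGL)\oo\varepsilon$ are both multiplicative and agree on the generator $t$, they agree on all of $s_*(KGL)\oo\Z[t,t^{-1}]$.

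The main obstacle is the precise identification of $v_K$ with $u\oo t$ under Theorem \ref{EK.toy}. This is a matter of tracking the conventions of Sections \ref{sec:KU}--\ref{sec:E.K}: the element $u\in s_1(KGL)$ comes from the periodicity map $u_K:T\to KGL$ of \eqref{def:u}, the generator $t\in H$ comes from Definition \ref{def:H}, and the identification $\eta_R(u)=v=ut$ in the algebroid of Example \ref{RH.algebroid} parallels the topological setup of Section \ref{sec:KU}. Once this dictionary is installed cleanly, the remainder is the formal unit-axiom computation above.
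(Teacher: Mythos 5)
Your proof is correct and follows essentially the same route as the paper's: both use that $\sigma^0$ is a ring-spectrum map and a left inverse to $\eta_L$ and $\eta_R$, the relation $v=ut$ (so $t=u^{-1}v$), and the conclusion $\sigma^0(u)=\sigma^0(v)=u$, from which multiplicativity forces $\sigma^0(u^it^j)=u^i$. The paper phrases this directly in terms of the monomial basis $u^it^j=u^{i-j}v^j$, while you reduce to the single generator $t$ via $s_*(KGL)$-linearity; this is only a cosmetic reorganization.
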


\begin{proof}
Since $\sigma^0:KGL\wedge KGL\to KGL$ is a left inverse to both
$\eta_L$ and $\eta_R$, and $u=\eta_L(u)$, $v=\eta_R(u)$, we see that
$\sigma^0$ sends the copies of $s_0(KGL)$ indexed by the monomials
$u^q$ and $v^q$ (in $\Z[u,u^{-1}]\oo H$) to the copies indexed by $u^q$.
Since $\sigma^0$ is a map of ring spectra, the copies indexed by 
$u^i\oo t^j=u^{i-j}v^j$ map to the copies indexed by $u^{i}$.
\end{proof}

Recall from Example \ref{Q.toy} that 
$KGL^{\wedge n+1}\oo\Q\cong KGL\oo\Q[t_0,t_0^{-1},...,t_n,t_n^{-1}]$.
This is $KGL\oo C^\mathdot_{\Q[t,t^{-1}]}(\Z,\Z)$,
where $C^\mathdot_{\Q[t,t^{-1}]}(\Z,\Z)$ is
the $n^{th}$ term of the cobar complex
of Example \ref{RH.algebroid} for the Hopf algebra $\Q[u,u^{-1}]$.

Now consider $(R,\Gamma)$ with $R=\Q[u,u^{-1}]$ and $\Gamma=R[t,t^{-1}]$,
as in Example \ref{RH.algebroid}. Combining Lemma \ref{not.partial0}, 
Proposition \ref{partial0} and Lemma \ref{sigma0.t}, we obtain the
rational version of the KGL slice conjecture \ref{conjecture8}:

\begin{cor}
Let $(R,\Gamma)$ be as above, and assume that
$s_0(\spherespectrum)\to s_0(KGL)$ is an isomorphism.
As cosimplicial motivic ring spectra,
\[
s_* N^\mathdot(KGL\oo\Q) \cong s_0(KGL)\oo 
C^\mathdot_{\Gamma}(R,R).
\]
\end{cor}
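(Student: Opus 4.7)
The plan is to combine Theorem \ref{thm:main}(a), Proposition \ref{partial0}, and Lemma \ref{sigma0.t}, bypassing the torsionfreeness hypothesis of Theorem \ref{thm:main}(b) by exploiting the identity $H\oo\Q = \Q[t,t^{-1}]$ noted in Example \ref{RH.algebroid}. For the rationalized algebroid $(R,\Gamma) = (\Q[u,u^{-1}], \Q[u^{\pm 1}, t^{\pm 1}])$, the cobar complex of Corollary \ref{cobar} becomes $C^n_\Gamma(R,R) = R\oo (H\oo\Q)^{\oo n} = \Q[u^{\pm 1}, t_1^{\pm 1}, \ldots, t_n^{\pm 1}]$.

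First I would identify the underlying spectra. By Lemma \ref{Kslice.ring}, $s_*(KGL) \cong s_0(KGL)\oo\Z[u,u^{-1}]$, and by Corollary \ref{K.toy} together with Example \ref{Q.toy}, $KGL^{\wedge n+1}\oo\Q \cong (KGL\oo\Q)\oo(H\oo\Q)^{\oo n}$. Combining these via Theorem \ref{thm:main}(a) (whose semi-cosimplicial isomorphism may be tensored with $\Q$) yields
\[
s_*\bigl(N^n(KGL\oo\Q)\bigr) \;\cong\; s_*(KGL)\oo (H\oo\Q)^{\oo n} \;\cong\; s_0(KGL)\oo C^n_\Gamma(R,R),
\]
compatibly with every coface and codegeneracy operator except possibly $\partial^0$ and $\sigma^0$.

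The hypothesis $s_0(\spherespectrum)\cong s_0(KGL)$ allows me to invoke Proposition \ref{partial0}, which identifies $\partial^0$ on $s_*(KGL)\oo(H\oo\Q)^{\oo n}$ with the right unit $\eta_R$ of the cobar complex. The main obstacle, and the key place where rationalization matters, is $\sigma^0$. Integrally, Lemma \ref{sigma0.t} only determines $\sigma^0$ on the subring $s_*(KGL)\oo\Z[t,t^{-1}]$, and extending it to all of $s_*(KGL)\oo H$ requires the torsionfreeness hypothesis used in Theorem \ref{thm:main}(b). However, rationally $H\oo\Q = \Z[t,t^{-1}]\oo\Q$, so Lemma \ref{sigma0.t} already determines $\sigma^0$ on all of $s_*(KGL)\oo(H\oo\Q)$ as the map induced by the counit $\varepsilon(t)=1$, which is exactly the $\sigma^0$ of $C^\mathdot_\Gamma(R,R)$. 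The higher codegeneracies $\sigma^j$ for $j>0$ are already covered by Theorem \ref{thm:main}(a), completing the identification of the cosimplicial ring structure.
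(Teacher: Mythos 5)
Your proof is correct and follows exactly the paper's own route: the paper proves this corollary by combining Lemma \ref{not.partial0}, Proposition \ref{partial0}, and Lemma \ref{sigma0.t}, relying implicitly on the observation (recorded in Example \ref{Q.toy}) that $H\oo\Q=\Q[t,t^{-1}]$ so that Lemma \ref{sigma0.t} determines $\sigma^0$ rationally without the torsionfree hypothesis. You have simply made explicit the bookkeeping that the paper's one-line proof leaves to the reader.
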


\bigskip
As in \cite[6.1]{AHS}, it is easy to verify that the diagram 
\begin{equation}
\xymatrix{
KGL\wedge\Pinfty \ar[r]^-{\xi-1} \ar[d]^-{b_K}& KGL\wedge KGL \ar[d]^-{v}\\
T^{-1}\wedge KGL\wedge\Pinfty \ar[r]^-{\xi-1} & T^{-1}\wedge KGL\wedge KGL}
\end{equation}
commutes, where $b_K$ is $KGL\wedge b$, $b$ is 
the map in \eqref{seq.spectra.3} and the right side is 
multiplication by $v_K$ (smashed with $T^{-1}$).

\begin{lem}\label{s0(1)=HZ}
If $S$ is smooth over a perfect field, then 
$s_0(\spherespectrum)\to H[\Z]$ is an isomorphism in $\SH(S)$.
\end{lem}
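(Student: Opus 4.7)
The plan is to reduce the statement to the case when $S=\Spec(k)$ is the spectrum of the perfect field itself, where the result is already known from the work of Voevodsky and Levine (as recorded in the Introduction, referring to \cite[10.5.1]{L} and \cite[11.3.6]{L}). Once we have the isomorphism $s_0(\spherespectrum_k)\map{\cong} H\Z_k$ in $\SH(k)$, it will be enough to transport it along the pullback functor $p^*:\SH(k)\to\SH(S)$ associated to the structure morphism $p:S\to\Spec(k)$.

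First, I would observe that $H\Z$ is stable under pullback along smooth morphisms: $p^*(H\Z_k)\cong H\Z_S$ in $\SH(S)$, and similarly $p^*\spherespectrum_k = \spherespectrum_S$. This is standard and does not require the perfect-field hypothesis. Second, I would invoke the fact, already used above in the proof of Proposition \ref{partial0}, that the slice functor $s_0$ commutes with the pullback $p^*:\SH(k)\to\SH(S)$ when $S$ is smooth over $k$ (see \cite[2.16]{P-functorial} and \cite[3.7]{P-functorial}). Thus $p^*s_0(\spherespectrum_k)\cong s_0(p^*\spherespectrum_k) = s_0(\spherespectrum_S)$.

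Finally, the canonical unit map $\spherespectrum\to H\Z$ is natural in the base, so the isomorphism $s_0(\spherespectrum_k)\map{\cong} H\Z_k$ over $k$ pulls back under $p^*$ to the map $s_0(\spherespectrum_S)\to H\Z_S$, which is therefore also an isomorphism. This gives the desired conclusion.

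There is no real obstacle here; the lemma is essentially a bookkeeping argument combining a known result over perfect fields with the fact that the relevant objects and functors respect smooth pullback. The nontrivial input, namely the isomorphism $s_0(\spherespectrum)\cong H\Z$ in $\SH(k)$ for $k$ perfect, is already taken as known from Voevodsky-Levine.
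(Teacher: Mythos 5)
Your argument is essentially identical to the paper's proof: both reduce to the known Voevodsky--Levine isomorphism over the perfect base field and then pull back along the smooth structure map, using that $s_0$ commutes with $\pi^*$ (via \cite{P-functorial}) and that $H\Z$ (and trivially $\spherespectrum$) is stable under such pullback. The only cosmetic difference is that you explicitly note the naturality of the unit map $\spherespectrum\to H\Z$, which the paper leaves implicit.
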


\begin{proof}
This is true over the ground field $F$, by \cite[11.3.6]{L}.
Now the slice functors 
$s_q$ commute with the pullback $\pi^*$
over $\pi:S\to\Spec(F)$; see \cite[2.16]{P-functorial}.
Because $S$ is smooth, $H\Z$ also pulls back over $\pi^*$ 
(see \cite[6.1]{V-ICM}, \cite[3.18]{V-EMsp}), and we have 
\[
s_0(\spherespectrum_S)=\pi^*s_0(\spherespectrum_F) =
\pi^*(H\Z_F) = H\Z_S.
\qedhere\]
\end{proof}

Now the group $\Hom_{\SH(S)}(H\Z,H\Z)\cong H^0(S,\Z)$ is torsionfree,
and equal to $\Z$ if $S$ is connected; cf.\,\cite[3.7]{V-RPO}. 
It follows that if $S$ is 
smooth over a perfect field, so that $s_0(\spherespectrum)\cong H\Z$,
the the hypothesis of the following Proposition is satisfied.

\begin{prop}\label{sigma0}
Assume that $\Hom_{\SH(S)}(s_0(\spherespectrum),s_0(\spherespectrum))$
is torsionfree.  Then the isomorphisms 
$s_q(KGL)\oo H^{\oo n}\map{\cong} s_q(KGL^{\wedge n+1})$
are compatible with $\sigma^0$.
\end{prop}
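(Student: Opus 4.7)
The plan is to bootstrap from Lemma \ref{sigma0.t} to all of $H$ by using the rationalization $H \subset \Q[t,t^{-1}]$ together with the torsionfreeness hypothesis. First, the codegeneracy $\sigma^0 : KGL^{\wedge n+2} \to KGL^{\wedge n+1}$ acts only on the first two tensor factors by multiplication, so smashing with $KGL^{\wedge n}$ reduces compatibility for general $n$ to the case $n=1$: we must show that the product map $\sigma^0 : s_*(KGL \wedge KGL) \to s_*(KGL)$ coincides with $\varepsilon : s_*(KGL) \oo H \to s_*(KGL)$ under the identification of Corollary \ref{K.toy}.

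By Lemma \ref{sigma0.t}, the two maps already agree on the sub-spectrum corresponding to $\Z[t,t^{-1}] \subset H$. Since $H$ embeds in $\Q[t,t^{-1}]$, every element $h$ of a chosen $\Z$-basis of $H$ satisfies $Nh \in \Z[t,t^{-1}]$ for some positive integer $N$. Both $\sigma^0$ and $\varepsilon$ are $s_*(KGL)$-linear in the $H$-variable, so on the summand of $s_*(KGL) \oo H$ indexed by $h$ the difference $\delta_h := \sigma^0 - \varepsilon$ is annihilated by $N$.

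Next, I would verify that both $\sigma^0$ and $\varepsilon$ preserve the total slice grading: this is clear for $\sigma^0$ as a map of graded ring spectra, and for $\varepsilon$ from $\varepsilon(u) = \varepsilon(v) = u$, which preserves the total degree $a+b$ on monomials $u^a v^b$. Consequently, each component of $\delta_h$ lies in a Hom group of the form $\Hom_{\SH(S)}(T^d \wedge s_0(KGL), T^d \wedge s_0(KGL))$, which by cancellation is $\Hom_{\SH(S)}(s_0(KGL), s_0(KGL))$. Under the isomorphism $s_0(\spherespectrum) \cong s_0(KGL)$ in force in Theorem \ref{thm:main}(b), this equals $\Hom_{\SH(S)}(s_0(\spherespectrum), s_0(\spherespectrum))$, which is torsionfree by hypothesis. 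Therefore $N\delta_h = 0$ forces $\delta_h = 0$, and summing over basis elements of $H$ yields $\sigma^0 = \varepsilon$.

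The main obstacle is the grading bookkeeping in the third step: one must ensure that each component of $\delta_h$ genuinely lives in an \emph{untwisted} diagonal Hom group, where the torsionfreeness hypothesis directly applies, rather than in some twisted $\Hom(s_a(\spherespectrum), T^e \wedge s_b(\spherespectrum))$ with $e \neq 0$ about which we would have no information. This is why the explicit grading on $H$ (with $\alpha_n$ in degree $n$, equivalently $v = ut$ in degree $1$), and the observation that both $\sigma^0$ and $\varepsilon$ respect this grading on $s_0(KGL)\oo \Z[u,u^{-1},v,v^{-1}]$, is the crucial input.
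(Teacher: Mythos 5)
Your proof is correct and follows essentially the same strategy as the paper: use Lemma \ref{sigma0.t} to get agreement of $\sigma^0$ and $\varepsilon$ on the sub-lattice $\Z[t,t^{-1}]\subset H$, then exploit the torsionfree hypothesis to extend to all of $H$, since every $h\in H$ has an integer multiple in $\Z[t,t^{-1}]$. The paper phrases this by testing on the generators $\alpha_n = \binom{t}{n}$ and noting that $n!\,\alpha_n = t(t-1)\cdots(t-n+1)$ lies in $\Z[t]$, so that $n!\,\sigma^0\alpha_n = \varepsilon(n!\,\alpha_n) = 0$ in a torsionfree group, whereas you phrase it as a per-basis-element torsion-vanishing argument; these are the same calculation. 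Your explicit attention to the grading bookkeeping in the third paragraph (confirming the components of $\delta_h$ land in the untwisted $\Hom(s_0(KGL),s_0(KGL))$ after cancelling $T^d$) is a welcome point that the paper leaves implicit, and your version arguably hews more closely to the stated hypothesis, since the paper invokes Lemma \ref{s0(1)=HZ} to get $[s_0(KGL),s_0(KGL)]\cong\Z$ where in fact the torsionfreeness assumed in the Proposition already suffices.
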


\begin{proof}
Every element of $H$ is a power of $t$ times a numerical polynomial,
so it suffices to check the test maps
$\alpha_n:s_0(K)\to s_0(KGL)\oo H\cong s_0(KGL\wedge KGL)$ 
corresponding to the elements $\binom{t}{n}$ of $H$. 
If $n=0,1$ we are done by Lemma \ref{sigma0.t}.
For $n\ge2$ the composition
\[ 
s_0(KGL) \map{n!} s_0(KGL) \map{\alpha_n} s_0(KGL\wedge KGL) 
\map{\sigma^0} s_0(KGL)
\]
is given by $\sigma^0(t(t-1)\cdots(t-n+1))=0$ in the ring
$[s_0(KGL),s_0(KGL)]$, which is $\Z$ by Lemma \ref{s0(1)=HZ}. 
Since $\sigma^0\alpha_n$ corresponds to
an integer, and is killed by $n!$, we must have $\sigma^0\alpha_n=0$.
\end{proof}



\section*{Acknowledgements}

The authors were supported by NSF grant FRG0966824.
They are grateful to Oliver R\"ondigs and Paul {\O}stv{\ae}r
for pointing out the calculations in Adams' book \cite{A},
to Markus Spitzweck for conversations about \cite{Sp},
and to all three for useful discussions about the slice conjectures.
%


\end{document}